\date{}
\def\dim{\operatorname{dim}}
\def\Ker{\operatorname{Ker}}
\def\Hom{\operatorname{Hom}}
\def\End{\operatorname{End}}
\def\ad{\operatorname{ad}} 
\def\M{\mathcal M}
\def\MS{\mathcal{M}S_C(n, L)}
\def\MO{\mathcal{M}O_C(n, L)}
\newcommand{\Sym}{\mathrm{Sym}}
\newcommand{\Iden}{\mathrm{Id}}
\newcommand{\rank}{\mathrm{rk}\,}
\newcommand{\Pic}{\mathrm{Pic}}
\newcommand{\Oc}{{{\cO}_{C}}}
\newcommand{\PP}{{\mathbb P}}
\newcommand{\Kc}{K_{C}}
\newcommand{\cE}{{\mathcal E}}
\newcommand{\cO}{{\mathcal O}}
\newcommand{\Ct}{C^{(2)}}
\newcommand{\Endo}{\mathrm{End}_0}
\newcommand{\IG}{\mathrm{IG}}
\newcommand{\rSU}{{\mathcal SU}}
\newtheorem{theorem}{{\textbf Theorem}}[section]
\newtheorem{proposition}[theorem]{{\textbf Proposition}}
\newtheorem{corollary}[theorem]{{\textbf Corollary}}
\newtheorem{lemma}[theorem]{{\textbf Lemma}}
\newtheorem{defn}[theorem]{{\textbf Definition}}
\newtheorem{notation}[theorem]{{\textbf Notation}}
\newtheorem{remit}[theorem]{{\textbf Remark}}
\newenvironment{remark}{\begin{remit}\rm}{\end{remit}}
  \title[Simplicity of Tangent bundles]{Simplicity of Tangent bundles on the moduli spaces \\of symplectic and orthogonal  bundles over a curve}
   \author[I. Choe]{Insong Choe$^\ast$}
 \address{Department of Mathematics, Konkuk University, 1 Hwayang-dong, Gwangjin-Gu, Seoul
143-701, Republic of Korea}
   \email{ischoe@konkuk.ac.kr}
   \author[G. H. Hitching]{George H.  Hitching}
 \address{Oslo Metropolitan University, Postboks 4, St. Olavs plass, 0130 Oslo, Norway}
   \email{gehahi@oslomet.no}
 \author[J. Hong]{Jaehyun Hong}
   \address{Center for Complex Geometry,  Institute for Basic Science (IBS), Daejeon 34126, Republic of Korea}
    \email{jhhong00@ibs.re.kr}
\subjclass[2020]{14D20, 53C10} 
\keywords{symplectic bundle, orthogonal bundle, minimal rational tangents}
\begin{document}

\begin{abstract}
The variety of minimal rational tangents associated to Hecke curves was used by J.-M.\ Hwang \cite{H01} to prove the simplicity of the tangent bundle on the moduli of vector bundles over a curve.
In this paper, we use the tangent maps  of the symplectic and orthogonal Hecke curves to prove an analogous result for symplectic and orthogonal bundles. In particular, we show the nondegeneracy of the associated variety of minimal rational tangents, which implies the simplicity of the tangent bundle on the moduli spaces of symplectic and orthogonal bundles over a curve. We also show that for large enough genus, the tangent map is an embedding for a general symplectic or orthogonal bundle.\\
\\
R\'{e}sum\'{e}. La vari\'{e}t\'{e} des tangentes des courbes minimales rationnelles associ\'{e}s aux courbes de Hecke, a \'{e}t\'{e} utilis\'{e}e par J.-M. Hwang \cite{H01} pour prouver la simplicit\'{e} du fibr\'{e} tangent \'{a} l’espace de modules des fibr\'{e}s vectoriels sur une courbe. Nous utilisons les applications tangentes des courbes de Hecke symplectiques et orthogonales pour demontrer un resultat analogue pour les fibr\'{e}s symplectiques et orthogonaux. En particulier, nous prouvons que la vari\'{e}t\'{e} des tangentes des courbes rationnelles minimales associ\'{e}e est nondeg\'{e}n\'{e}r\'{e}e ; ce qui implique la simplicit\'{e} des fibr\'{e}s tangents des espaces de modules des fibr\'{e}s symplectiques et orthogonaux sur une courbe. Nous montrons d’ailleurs, pour genre suffisamment grand, que l'application tangente est un plongement pour un fibr\'{e} symplectique ou orthogonal g\'{e}n\'{e}rique.
\end{abstract}

 \maketitle


\section{Introduction}

Let $C$ be a smooth   projective curve of genus $g \ge 2$ over the complex numbers. Let $\M:=\rSU_C(n,d)$ be the moduli space of semistable vector bundles over $C$ of rank $n$ with fixed determinant of degree $d$. 
Note that $\M$ is a Fano variety of Picard number 1 and moreover smooth if  $n$ and $d$ are coprime.  

It was shown in \cite[Corollary 1]{H01} that  for $g \ge 4$, the tangent bundle of  the smooth part $\M^\circ \subset \M$ is simple. 
The strategy   was to  exploit certain minimal rational curves called Hecke curves and the associated variety of minimal rational tangents. 
More precisely,  it is shown that the variety of minimal rational tangents $\mathcal{C}_W$ at a generic point $W \in \M$ is non-degenerate in $\mathbb{P} (T_W \M)$ and this implies the simplicity of the tangent bundle (cf.\ Proposition \ref{nondegsimple}).

The goal of this paper is to prove the analogous result for the moduli spaces $\MS$ of symplectic bundles and $\MO$ of orthogonal bundles. The symplectic and orthogonal versions of  Hecke curves were constructed   in \cite{CCL22}. In the same paper, these curves were shown to be the minimal rational curves in the ambient varieties. Based on this, we establish the following results in this paper:
\begin{itemize}
\item The smoothness of the symplectic and orthogonal Hecke curves (\S 3)
\item The nondegeneracy of the tangent map on the variety of minimal rational tangents of these Hecke curves (\S 4)
\item The very-ampleness of the associated complete linear system  (\S 5)
\end{itemize}
In particular, as a corollary of the nondegeneracy  in \S 4, we show that the tangent bundles of $\MS$ and $\MO$ are simple, under a certain genus bound.

 We would like to add a word of warning for the arguments that will follow. Inside the moduli space $\rSU_C(n,d)$ of vector bundles, the locus of symplectic/orthogonal bundles form a closed subvariety, and  a  symplectic/orthogonal Hecke curve can be thought of as either a special kind of  Hecke curve on $\rSU_C(n, d)$ or its variation. So one might expect that 
 the results for Hecke curves in \cite{H01}, \cite{HR04}, or \cite{S05}  directly imply the same results for the symplectic or  orthogonal setting. 
 
 But in most discussions of a Hecke curve on $\rSU_C(n, d)$, 
  one assumes that it passes through a generic point, such as a $(1,1)$-stable bundle \cite[Definition 5.1]{NR78}. And it is unclear if a generic point of $\MS$ and/or $\MO$  is $(1,1)$-stable as a vector bundle. By simple dimension comparison, it is still possible that the subvarieties $\MS$ and/or $\MO$ are entirely contained in the  non-$(1,1)$-stable  locus. For this reason, we cannot tell from the outset if the symplectic and orthogonal Hecke curves share the same properties as the Hecke curves passing through $(1,1)$-stable locus. This is why we later devise arguments  based on $\delta$-stability (cf.\ \cite[\S 4.1]{CCL22}) on $\MS$ and $\MO$.



\section{Preliminary results}

In this section, we gather the notations and preliminary results relevant to our discussion.  Let $C$ be a smooth  projective curve of genus $g \geq 2$. 
\begin{notation}
Given a subspace $\Lambda \subset V^*$ of a dual vector space, $\Lambda^\perp \subset V $ denotes the kernel:
\[
\Lambda^\perp = \{ v \in V \: : \: \lambda(v) = 0 \ \text{for all } \lambda \in \Lambda \}.
\]
Also for a subspace $U \subset V$, $U^\perp \subset V^*$ denotes the annihilator: 
\[
U^\perp = \{ \phi \in V^* \: : \: \phi ( u )  =  0 \ \text{for all } u \in U \}.
\]
When $V$ is equipped with a bilinear form $\omega\colon V \otimes V \to \mathbb C$,  we define
\[
\ker (\omega) := \{ v_0 \in V \: : \: \omega (v_0, v) = 0 \ \text{for all } v \in V \}
\]
\end{notation}

\subsection{Hecke modification} \label{Heckemodification}  Let $W$ be a vector bundle over $C$.
Choose a subspace $\Lambda \subset W_x^*$ for some $x \in C$.
The  {\it Hecke modification  $W^\Lambda$  of $W$ along $\Lambda$}  is given by the kernel of the composition map $W \rightarrow W_x \rightarrow W_x/\Lambda^\perp$.
 There is an exact sequence of sheaves:
$$ 0 \rightarrow  W^\Lambda \stackrel{\phi}{\rightarrow} W \rightarrow (W_x/\Lambda^\perp) \otimes \mathcal O_x \rightarrow 0 $$
whose restriction to the fiber at $x$ is given by

$$0 \rightarrow \Ker(\phi_x\colon W_x^\Lambda \rightarrow W_x)\rightarrow W^\Lambda_x \rightarrow W_x \rightarrow W_x/\Lambda^\perp \rightarrow 0.$$
Then the locally free sheaf $W^\Lambda$ corresponds to a vector bundle with $\det (W^\Lambda) = \det(W) \otimes \mathcal O_C(-kx)$, where $k $ is the dimension of $\Lambda$ in $W_x^*$.

\subsection{Hecke curves on $\rSU_C(n,d)$} \label{heckecurves} The main reference for this subsection is \cite{H01}.

Let  $W$ be a vector bundle over $C$.
For a subspace $\theta \subset W_x^*$ of dimension one,  by abuse of notation,
let $W^{\theta}$ denote the Hecke modification of $W$ along $\theta$. Then we have
$$0 \rightarrow W^{\theta} \rightarrow W \rightarrow (W_x/\theta^{\perp}) \otimes \mathcal O_x \rightarrow 0.$$

For a subspace $\ell \subset W^{\theta}_x$ of dimension one,
 the Hecke modification $V^{\ell} $  of $V:=(W^{\theta})^*$ along $\ell$ fits into the exact  sequence
\begin{equation} \label{Heckeseq}
0 \rightarrow V^{\ell} \stackrel{\beta}{\rightarrow}  V \rightarrow (V_x/\ell^{\perp}) \otimes \mathcal O_x \rightarrow 0. 
\end{equation}
In particular for $\ell_0:=\Ker(W_x^{\theta} \rightarrow W_x)$,
we have $(V^{\ell_0})^* \simeq W$.

For any two-dimensional subspace $U$ with   $\ell_0 \subset U \subset W^{\theta}_x$,  the subspace $U^{\perp}$  has codimension two in  $V_x$, and is contained in $\ell_0^{\perp}$. Hence the family 
\[
\{(V^{\ell})^* \: : \: { [\ell] \in \mathbb P(U)   } \}
\]
parameterized by 
 $\mathbb P(U) =\mathbb P(V_x/U^{\perp}) \cong \mathbb P^1$, is a deformation of $W$. 
 If we choose a generic  $W \in \rSU_C(n,d)$, then this family gives a smooth  rational curve through $W$ called a \textit{Hecke curve}. 
 It was shown in  \cite{S05} that the Hecke curves have minimal degree among the rational curves passing through a generic $W \in \rSU_C(n,d)$. 
 
Note that the parameter space of Hecke curves passing through $W \in \rSU_C(n,d)$ is given by a double fibration 
 \[
 \PP (T_\pi^*) \to \PP (W^*) \stackrel{\pi}{\to} C,
 \]
 where $T_\pi$ is the vertical tangent bundle of $\pi \colon \PP (W^*) \to C$. In the previous notation, this corresponds to the composition map 
 \[
 (U  / \ell_0 \subset W_x^\theta/ \ell_0)  \mapsto (\theta  \subset W_x^*) \mapsto x.
 \]
 In particular for $n=2$, this double fibration boils down to the ruled surface $\PP (W^*)$.

 \subsection{Kodaira--Spencer map} \label{KSell}
The main references of this subsection are \cite{NR75} and \cite{NR78}. Consider the above family $\{ V^\ell  :  [\ell] \in  \mathbb P (U) \}$ as a deformation of $V^{\ell_0} \cong W^*$. Since the map  $\beta_x \colon V_x^{\ell_0} \to V_x$ surjects onto $\ell_0^\perp$, we have the induced pull-back map 
\[
\widehat{\beta_x}\colon  \Hom (\ell_0^\perp / U^\perp, V') \to \Hom (V_x^{\ell_0}, V')
\]   
for any vector space $V'$.
\begin{proposition} \label{KScomp} 
The Kodaira--Spencer map of the family $\{ V^\ell  :  [\ell] \in  \mathbb P (U) \}$  is given by
\begin{eqnarray*}
\Hom(\ell_0^{\perp} /U^\perp , V_x/\ell_0^{\perp}) \stackrel{\widehat{\beta_x}}{\longrightarrow} \Hom(V_x^{\ell_0}, V_x/\ell_0^{\perp}) \stackrel{\delta}{\rightarrow}  H^1(C, \mathrm{End}(V^{\ell_0})),
\end{eqnarray*}
where $\delta$ is induced from the sequence (\ref{Heckeseq}).
\end{proposition}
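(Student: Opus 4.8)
The plan is to compute the Kodaira–Spencer map directly from the description of the family as a sequence of Hecke modifications, using the standard identification of first-order deformations of a bundle with $H^1(C,\mathrm{End})$ and tracking how the parameter $[\ell]\in\mathbb P(U)$ enters the exact sequence (\ref{Heckeseq}). First I would set up the universal picture: over $C\times\mathbb P(U)$ one has a family $\mathcal V$ whose fiber over $[\ell]$ is $V^\ell$, obtained as the kernel of a surjection $p_C^*V\to \mathcal Q$ onto a sheaf supported on $\{x\}\times\mathbb P(U)$ which interpolates between the quotients $V_x/\ell^\perp$. The point $[\ell_0]$ is where this quotient degenerates, in the sense that $V_x^{\ell_0}\to V_x$ has image $\ell_0^\perp$ rather than all of $V_x$; the subspace $U^\perp\subset\ell_0^\perp$ of codimension one inside $\ell_0^\perp$ (codimension two in $V_x$) is the fixed part, and $[\ell]\mapsto\ell^\perp$ sweeps out the pencil of hyperplanes of $\ell_0^\perp$ containing $U^\perp$, i.e.\ the pencil $\mathbb P((\ell_0^\perp/U^\perp)^*)\cong\mathbb P(U)$.

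The key computational step is to differentiate at $[\ell_0]$. A tangent vector to $\mathbb P(U)$ at $[\ell_0]$ is an element of $\Hom(\ell_0, U/\ell_0)$, equivalently — after dualizing and using $U^\perp\subset\ell_0^\perp$, $(W_x^\theta/\ell_0)^*\cong\ell_0^\perp$ — an element of $\Hom(\ell_0^\perp/U^\perp, V_x/\ell_0^\perp)$; this is the source of the map in the statement. To see where such a tangent vector lands, I would write the first-order deformation of $V^{\ell_0}$ explicitly: moving $\ell^\perp$ to first order by $\eta\in\Hom(\ell_0^\perp/U^\perp,V_x/\ell_0^\perp)$ changes the gluing of $V^{\ell_0}$ along $x$ by the class obtained from $\eta$ via the connecting maps of (\ref{Heckeseq}). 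Concretely, $\widehat{\beta_x}$ pulls $\eta$ back along $\beta_x\colon V_x^{\ell_0}\twoheadrightarrow\ell_0^\perp$ (this is legitimate precisely because the image is $\ell_0^\perp$ and $\eta$ factors through $\ell_0^\perp/U^\perp$), producing an element of $\Hom(V_x^{\ell_0}, V_x/\ell_0^\perp)$; then the coboundary $\delta$ of the sheaf sequence (\ref{Heckeseq}) — which relates a map out of the fiber $V_x^{\ell_0}$ at the point $x$ to a class in $H^1(C,\mathrm{End}(V^{\ell_0}))$ by the usual local-to-global / Hecke cohomology argument (the sequence $0\to\mathrm{End}(V^{\ell_0})\to\mathrm{Hom}(V^{\ell_0},V)\to\mathrm{Hom}(V^{\ell_0},(V_x/\ell_0^\perp)\otimes\mathcal O_x)\to0$ gives the connecting homomorphism) — yields the deformation class.

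The main obstacle I anticipate is bookkeeping rather than anything deep: one must make sure that the first-order variation of the quotient sheaf $\mathcal Q$ at $[\ell_0]$ is identified correctly with the image of $\eta$ under $\widehat{\beta_x}$, being careful about the two successive Hecke modifications (first $W\rightsquigarrow W^\theta$, then dualize and modify $V=(W^\theta)^*\rightsquigarrow V^\ell$) and about which annihilators and quotients are being taken in $W_x$, $W_x^\theta$, $V_x$, $V_x^{\ell_0}$. A clean way to handle this is to reduce to the local model at $x$: trivialize all bundles near $x$, write the modification in terms of an explicit lattice depending linearly on a coordinate on $\mathbb P(U)$ near $[\ell_0]$, differentiate the corresponding transition matrix, and read off the Čech $1$-cocycle representing the Kodaira–Spencer image; comparing this cocycle with the composite $\delta\circ\widehat{\beta_x}$ then gives the claimed formula. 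Finally I would note that this is exactly the computation carried out in \cite{NR75,NR78} for Hecke modifications of vector bundles, specialized to the pencil $\mathbb P(U)$, so the proof amounts to checking that the construction of \S\ref{heckecurves} fits their framework with $\ell_0$ playing the distinguished role.
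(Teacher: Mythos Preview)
Your proposal is correct and follows the standard approach. The paper itself does not supply a proof of this proposition: it is stated as a known fact with \cite{NR75} and \cite{NR78} given as references, and your outline --- set up the universal family, differentiate the local transition data at $[\ell_0]$, and identify the resulting \v{C}ech cocycle with $\delta\circ\widehat{\beta_x}$ --- is exactly the computation carried out in those references (and is the same local-frame method the paper later spells out explicitly in the proofs of Propositions \ref{prop: tangent map symplectic} and \ref{prop: tangent map orthogonal}).
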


Later we will also need to consider a slight generalization of the above family, where the Hecke modification is taken for subspaces of dimension two. 
For a subspace $\Theta \subset W_x^*$ of dimension 2, the Hecke modification $W^{\Theta}$ of $W$ along $\Theta$ can be put into the following exact sequence:
$$ 0\rightarrow W^{\Theta} \rightarrow W \rightarrow (W_x/\Theta^{\perp}) \otimes \mathcal O_x \rightarrow 0.$$
 Let  $\wp_0$ denote the kernel of $W_x^{\Theta} \rightarrow W_x$. The following is a straightforward generalization of Proposition \ref{KScomp}.
\begin{proposition}   \label{prop: KS map codimension 4}
  Let $U$ be a subspace  of dimension   4 with $\wp_0 \subset U \subset  W_x^{\Theta} $; that is, $U^{\perp} \subset \wp_0^{\perp} \subset (W_x^{\Theta})^*=:V_x$. Then
the family $\{V^{\wp} : [\wp] \in   Gr(2,  V_x/U^{\perp}) \}$ is a deformation of $W^* =V^{\wp_0}$, and its  Kodaira--Spencer map
$$T_{\wp_0}(Gr(2,V_x/U^{\perp})) = \Hom(\wp_0^{\perp}/U^{\perp}, V_x/\wp_0^{\perp}) \rightarrow H^1(C, \End(V^{\wp_0}))$$ is given by the composition
\begin{eqnarray*}
\Hom(\wp_0^{\perp}/U^{\perp}, V_x/\wp_0^{\perp}) \stackrel{\widehat{\beta_x}}{\rightarrow} \Hom(V_x^{\wp_0}, V_x/\wp_0^{\perp}) \stackrel{\delta}{\rightarrow}  H^1(C, \mathrm{End}(V^{\wp_0})),
\end{eqnarray*}
where  $\widehat{\beta_x} $ and $\delta$ are induced from
$$0 \rightarrow V^{\wp_0} \stackrel{\beta}{\rightarrow} V \rightarrow (V_x/\wp_0^{\perp}) \otimes \mathcal O_x \rightarrow 0.$$
\end{proposition}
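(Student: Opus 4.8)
To prove Proposition~\ref{prop: KS map codimension 4}, the plan is to repeat, for Hecke modifications of dimension two, the computation behind Proposition~\ref{KScomp}, which itself follows the treatment of Hecke modifications in \cite{NR75,NR78}. The first observation is that the configuration of annihilators here is the direct analogue of the one in \S\ref{KSell}: since $\wp_0$ has dimension $2$, the subspace $\wp_0^\perp \subset V_x$ has codimension $2$; since $U$ has dimension $4$, the subspace $U^\perp \subset V_x$ has codimension $4$ and lies in $\wp_0^\perp$; hence $\wp_0^\perp/U^\perp$ and $V_x/\wp_0^\perp$ are both two-dimensional and $\mathrm{Gr}(2, V_x/U^\perp)$ is four-dimensional, exactly parallel to the configuration $\ell_0 \subset U \subset W_x^\theta$ with $\dim U = 2$. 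So one expects the argument of Proposition~\ref{KScomp} to carry over after replacing ``$\PP(U)$'' by ``$\mathrm{Gr}(2, V_x/U^\perp)$'' and ``lines'' by ``$2$-planes''.

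First I would realize the family globally, so that the Kodaira--Spencer map is literally the differential of the induced classifying morphism. Let $G := \mathrm{Gr}(2, V_x/U^\perp)$ with tautological subbundle $\mathcal S \hookrightarrow (V_x/U^\perp) \otimes \mathcal O_G$, and let $P \subset V_x \otimes \mathcal O_G$ be the preimage of $\mathcal S$ under the surjection $V_x \otimes \mathcal O_G \twoheadrightarrow (V_x/U^\perp) \otimes \mathcal O_G$, so that $P$ contains $U^\perp \otimes \mathcal O_G$ and has corank $2$. On $G \times C$ put
\[
\mathcal V := \Ker\Big( \mathrm{pr}_C^* V \longrightarrow \big( (V_x \otimes \mathcal O_G)/P \big) \otimes \mathcal O_{G \times \{x\}} \Big).
\]
Then $\mathcal V$ is flat over $G$ with locally free fibres; its restriction to a fibre $\{[\wp]\} \times C$ is the Hecke modification $V^\wp$ of the statement; and over the point $[\wp_0]$, corresponding to $\mathcal S = \wp_0^\perp/U^\perp$, it restricts to $V^{\wp_0} \cong W^*$, the dimension-two analogue of $(V^{\ell_0})^* \cong W$ from \S\ref{heckecurves}. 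This exhibits $\{V^\wp\}$ as a deformation of $W^*$ and produces a Kodaira--Spencer map $\kappa\colon T_{[\wp_0]}G \to H^1(C, \End(V^{\wp_0}))$.

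The source is identified by the standard description of the tangent space of a Grassmannian: at the point $\mathcal S = \wp_0^\perp/U^\perp$ of the four-dimensional space $V_x/U^\perp$ it is $\Hom\big(\wp_0^\perp/U^\perp,\, (V_x/U^\perp)/(\wp_0^\perp/U^\perp)\big) = \Hom(\wp_0^\perp/U^\perp, V_x/\wp_0^\perp)$, the group appearing in the statement. To evaluate $\kappa$ on a tangent vector $\psi$, I would take the first-order arc of $2$-planes $\mathcal S_\varepsilon = (\Iden + \varepsilon\widetilde\psi)(\wp_0^\perp/U^\perp)$ for a lift $\widetilde\psi$ of $\psi$ to an endomorphism of $V_x/U^\perp$, form the induced first-order deformation $V^{\mathcal S_\varepsilon}$ of $V^{\wp_0}$, and compute its class in $H^1(C, \End(V^{\wp_0}))$ by gluing the canonical identification $V^{\mathcal S_\varepsilon}|_{C \setminus \{x\}} = V|_{C \setminus \{x\}} = V^{\wp_0}|_{C \setminus \{x\}}$ to the local model at $x$. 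As in \cite{NR75,NR78}, the resulting \v{C}ech $1$-cocycle is exactly $\delta\big(\widehat{\beta_x}(\psi)\big)$: here $\widehat{\beta_x}(\psi)\colon V_x^{\wp_0} \to V_x/\wp_0^\perp$ is the composite of $\psi$ with the surjection $V_x^{\wp_0} \twoheadrightarrow \Image(\beta_x) = \wp_0^\perp \twoheadrightarrow \wp_0^\perp/U^\perp$ --- the one place where the hypothesis $U^\perp \subset \wp_0^\perp$, i.e.\ $\wp_0 \subset U$, is used --- and $\delta\colon \Hom(V_x^{\wp_0}, V_x/\wp_0^\perp) = \Hom\big(V^{\wp_0}, (V_x/\wp_0^\perp) \otimes \mathcal O_x\big) \to \operatorname{Ext}^1(V^{\wp_0}, V^{\wp_0}) = H^1(C, \End(V^{\wp_0}))$ is the connecting map of $0 \to V^{\wp_0} \xrightarrow{\beta} V \to (V_x/\wp_0^\perp) \otimes \mathcal O_x \to 0$. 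This is precisely the asserted composition.

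I expect the main obstacle to be organizational rather than conceptual: keeping the nested spaces $U^\perp \subset \wp_0^\perp \subset V_x$ and all the attendant quotients straight, verifying that the relative Hecke modification $\mathcal V$ is flat over $G$ so that $\kappa$ is well defined, and checking that the naive ``vary the codimension-two subspace'' derivative on $G$ really matches the connecting-homomorphism formula after pullback along $\beta_x$. Since each of these steps is carried out in \cite{NR75,NR78} and in the proof of Proposition~\ref{KScomp}, from which the present case differs only in the dimensions of the subspaces involved, I would present the argument by recording those changes rather than reproducing the Narasimhan--Ramanan computation in full.
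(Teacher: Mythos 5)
Your proposal is correct and follows the same route the paper intends: the paper offers no proof at all, declaring the statement ``a straightforward generalization of Proposition~\ref{KScomp}'' (itself quoted from the Narasimhan--Ramanan computations in \cite{NR75,NR78}), and your write-up simply makes explicit the relative Hecke modification over $\mathrm{Gr}(2,V_x/U^\perp)$ and the \v{C}ech-cocycle evaluation that this generalization amounts to. Your identification of $\widehat{\beta_x}(\psi)$ as $\psi$ precomposed with $V_x^{\wp_0}\twoheadrightarrow \wp_0^\perp \twoheadrightarrow \wp_0^\perp/U^\perp$ matches the paper's definition of the pull-back map, so no gap remains.
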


 \subsection{Symplectic Hecke curves on $\MS$}
In this subsection, we recall the construction of  symplectic Hecke curves, following \cite{CCL22},  to which we refer the reader for the details.  
 
For a line bundle $L$ on $C$,  an $L$-valued symplectic bundle of rank $n$ is  a vector bundle  $W$  of (even) rank $n$ equipped with an $L$-valued symplectic form
$\omega\colon W \otimes W \rightarrow L.$  Let $\MS$  be  the moduli space of $L$-valued    symplectic bundles of rank $n$. By the morphism forgetting the symplectic forms,  the moduli space $\MS$ can be thought of as a subvariety of $\rSU_C(n, \frac{1}{2} n \ell)$, where $\ell = \deg (L)$. To avoid the coincidence $\mathcal M S_C(2,L) = \rSU_C(2, \ell)$, we assume $n \ge 4$ throughout this paper. 

The construction of symplectic Hecke curves on $\MS$ closely follows  the previous construction of Hecke curves on $\rSU_C(n,d)$, keeping track of the  deformation of  symplectic forms.
For a subspace $\theta \subset W_x^*$ of dimension one, let $W^\theta$ be the Hecke modification of $W$ along $\theta $, fitting into the sequence
\begin{equation*} \label{exacttheta}
0 \rightarrow W^{\theta} \rightarrow W \rightarrow (W_x/\theta^{\perp}) \otimes \mathcal O_x \rightarrow 0.
\end{equation*}
Noting that every 1-dimensional subspace $\theta$ is isotropic, we get an induced $L^*(x)$-valued skew-symmetric form on $V:=(W^{\theta})^*$:
$$\omega^{\theta}\colon(W^{\theta})^* \rightarrow W^{\theta} \otimes L^*(x) .$$
 Then $\ker \omega^{\theta}_x$ has codimension two in $V_x=(W^{\theta})^*_x$.

For a subspace $\ell \subset  W^{\theta} _x$ of dimension one,   we have the sequence
$$0 \rightarrow V^{\ell} \rightarrow V  \rightarrow (V_x/\ell^{\perp}) \otimes \mathcal O_x \rightarrow 0.$$
 Then the bundle 
   $V^{\ell}$ has a skew-symmetric form  induced from $\omega^{\theta}$,  and it is an $L^*$-valued  nondegenerate (symplectic) form if and only if $\ker \omega^{\theta}_x \subset \ell^{\perp}$.
Now the family 
\[
\{V^\ell \: : \:  \ker \omega^{\theta}_x \subset \ell^{\perp} \subset V_x\}
\]
 of $L$-valued symplectic bundles are parameterized by 
$\mathbb P(V_x/\ker \omega^{\theta}_x) \simeq \mathbb P^1$.    In particular, if   $\ell_0$ is in the kernel of $W_x^{\theta} \rightarrow W_x$, then $V^{\ell_0} \cong W^*$.

 Under the assumption that $g \ge 3$ and $W \in \MS$ is a generic point, by \cite[Lemma 4.5]{CCL22} the dual family 
$$
\{(V^{\ell})^* \: : \: \ell \in \mathbb P(V_x/\ker \omega^{\theta}_x) \}
 $$
gives a    rational curve on $\MS$ passing through $W$, called a \textit{symplectic Hecke curve}.
Also it was shown  in \cite[Theorem 5.2]{CCL22} that these  curves have minimal degree among the rational curves passing through a generic point $W \in \MS$.

 Later we need the following fact.

\begin{proposition} \label{prop: contained in smooth locus symplectic} Assume $g \ge 3$. 
For a generic point $W \in \MS$, every symplectic Hecke curve  passing through $W$ is contained in the smooth locus of $\MS$. 
\end{proposition}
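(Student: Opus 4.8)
The plan is to show that, for a generic $W\in\MS$, every member $E:=(V^\ell)^*$ of \emph{every} symplectic Hecke curve through $W$ is \emph{stable as a vector bundle}. This suffices: if $E$ is a stable vector bundle then $\Aut(E)=\C^*\cdot\mathrm{Id}$, so the group of symplectic automorphisms $\Aut(E,\omega_E)$ equals $\{\pm\mathrm{Id}\}$, the centre of $Sp(n)$; thus $E$ is a regularly stable $Sp(n)$-bundle, and the regularly stable locus is exactly the smooth locus of $\MS$ (by the deformation theory of $L$-valued symplectic bundles on a curve, the obstruction space $H^2$ vanishing; this is standard). Recall that $\det E=\det W$ and that $E$ carries the $L$-valued symplectic form $\omega_E$ induced from $\omega$.

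The first ingredient is $\delta$-stability in the sense of \cite[\S4.1]{CCL22}: for $\delta>0$, a symplectic bundle of slope $\mu$ is $\delta$-stable if every nonzero proper isotropic subbundle $F$ satisfies $\deg F<\mu\cdot\mathrm{rk}\,F-\delta$. By \cite{CCL22}, a generic $W\in\MS$ is $\delta$-stable for every $\delta$ below a bound that grows with $g$; since $g\ge3$ we fix such a $W$ with $\delta>c$, where $c$ is the explicit constant below. The second ingredient is that $W$ and $E=(V^\ell)^*$ are two elementary upward modifications of the same rank-$n$ bundle $W^\theta$ at the single point $x$: from the construction one has $W^\theta\subset W$ and $W^\theta\subset E$, each with torsion quotient of length one supported at $x$, and the symplectic structures on $W$, on $V=(W^\theta)^*$ and on $E$ are induced one from another, so isotropic subsheaves are carried to isotropic subsheaves under these operations. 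Consequently, for any subbundle $G\subset E$ the saturation in $W$ of $G\cap W^\theta$ is a subbundle of $W$ of the same rank and of degree $\ge\deg G-c$ (one checks $c=1$ works), and it is isotropic for $\omega$ whenever $G$ is isotropic for $\omega_E$.

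Combining the two ingredients: if $E$ had a nonzero proper isotropic subbundle $F$ with $\deg F\ge\mu\cdot\mathrm{rk}\,F-\delta'$ for $\delta':=\delta-c>0$, then $W$ would acquire a nonzero proper isotropic subbundle of degree $\ge\mu\cdot\mathrm{rk}\,F-\delta$, contradicting the $\delta$-stability of $W$. Hence every $E=(V^\ell)^*$ is $\delta'$-stable as a symplectic bundle with $\delta'>0$; in particular it is a stable symplectic bundle, and therefore, by the standard comparison between $Sp$-stability and $GL$-stability, semistable as a vector bundle. Applying the usual reduction to a subbundle $G\subset E$ with $\mu(G)\ge\mu$ --- replace $G$ by the isotropic subbundle $G\cap G^{\perp}$ (perpendicular with respect to $\omega_E$), whose slope one computes to be $\ge\mu$ --- one sees that either $G$ yields an isotropic subbundle of slope $\ge\mu$, contradicting $\delta'$-stability, or $G\cap G^{\perp}=0$, in which case $E=G\oplus G^{\perp}$ is an orthogonal direct sum of two proper symplectic subbundles. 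Thus $E$ is stable as a vector bundle \emph{unless} it admits such an orthogonal decomposition.

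The remaining point --- excluding orthogonal decompositions $E=A\oplus B$ into symplectic subbundles with $\mathrm{rk}\,A,\mathrm{rk}\,B$ even and $\mu(A)=\mu(B)=\mu$ --- is the principal obstacle, and $\delta$-stability alone cannot settle it, since stable symplectic bundles admitting such decompositions do exist. Here I would trace the decomposition back through the length-one modifications: $A$ and $B$ give subbundles of $E$ of complementary even ranks and slope $\mu$, hence subsheaves of $W^\theta$, and then of $W$, of complementary ranks whose degrees differ from $\mu\cdot\mathrm{rk}$ by at most $c$; combined with the $\delta$-stability of $W$ (with $\delta>c$) and with the indecomposability of the generic $W$ as a symplectic bundle (which holds because $W$ is a smooth point of $\MS$), this should force the decomposition of $E$ to propagate to a decomposition of $W$, a contradiction. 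Alternatively one may compare the dimension of the family of symplectic Hecke curves through $W$, which fibres over $\PP(W^*)$, with the codimension $n_1n_2(g-1)\ge 2(n-2)(g-1)$ of the locus of orthogonally decomposable symplectic bundles in $\MS$. I expect this exclusion step --- carried out uniformly over all symplectic Hecke curves through a generic $W$ and, as the introduction warns, without recourse to $(1,1)$-stability of $W$ as a vector bundle --- to be where the genus hypothesis and the fine structure of the symplectic Hecke construction are genuinely used; the rest is the degree bookkeeping above together with the symplectic analogue of the Narasimhan--Ramanan stability estimates.
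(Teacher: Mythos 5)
Your reduction is sound as far as it goes: for a stable symplectic bundle, being a smooth point of $\MS$ is equivalent to being regularly stable, and failure of regular stability is equivalent to an orthogonal decomposition $E=A\perp B$ into stable symplectic subbundles; moreover your $\delta$-stability transfer across the two colength-one modifications at $x$ (with degree loss $c=1$) is essentially the argument showing the Hecke curve stays in the stable locus --- a fact the paper simply quotes from \cite[Lemma 4.5]{CCL22}. The difficulty is that the entire content of the proposition is the step you leave open, namely excluding the decompositions $E=A\perp B$. You correctly flag this as ``the principal obstacle,'' but you do not close it. Your first suggestion (propagating the decomposition of $E$ back to one of $W$) does not work: $A\cap W^\theta$ and $B\cap W^\theta$ saturate in $W$ only to a full-rank subsheaf $A'\oplus B'\subset W$ of colength at most $2$, which is far from a direct-sum decomposition of $W$, so neither $\delta$-stability nor ``indecomposability of the generic $W$'' yields a contradiction. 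Your second suggestion is the right one and is in fact the paper's whole proof, but you state it as an expectation rather than carrying it out, so the proof is incomplete exactly where it matters.

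For comparison, the paper's argument is a short dimension count run from the other end: the non-regularly-stable locus is covered by finitely many images of $\mathcal{M}S_C(n_1,L)\times\mathcal{M}S_C(n_2,L)$ with $n_1+n_2=n$; the symplectic Hecke curves through a fixed point $W_0$ of that locus are parameterized by $\PP(W_0^*)$, of dimension $n$; hence the union of all symplectic Hecke curves meeting that locus has dimension at most $\dim\PP(W_0^*)+\dim\mathcal{M}S_C(n_1,L)+\dim\mathcal{M}S_C(n_2,L)$, which is less than $\dim\MS$ precisely when $n<n_1n_2(g-1)$, and this holds for $g\ge3$ and $n_1,n_2$ even. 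A generic $W$ therefore lies on no such curve (using that a Hecke curve through $W$ which meets the locus at $W_0$ is itself a Hecke curve attached to $W_0$). Replacing your speculative final paragraph with this count --- and compressing your first two paragraphs to the citation of \cite[Lemma 4.5]{CCL22} --- would give the intended proof.
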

\begin{proof}
By \cite[Lemma 4.5]{CCL22}, any symplectic Hecke curve passing through a generic  point $W$ stays inside the locus of  stable symplectic bundles. To see that it is contained in the smooth locus, it suffices to show that it does not touch the locus of non-regularly stable symplectic bundles which are of the form $W_1 \perp W_2$ for some stable symplectic subbundles $W_1$ and $W_2$. 

This can be checked by dimension count: The locus of non-regularly stable bundles is contained in a finite union of the images of $\mathcal M S_C(n_1, L) \times \mathcal M S_C(n_2, L)$, where $n_1+n_2 = n$. Since  symplectic Hecke curves passing through a point $W_0$ in this subvariety are parameterized by $\mathbb P (W_0^*)$, it suffices to check the inequality:
\[
\dim \mathbb{P}  (W_0^*)  + \dim \mathcal M S_C(n_1, L) + \dim  \mathcal M S_C(n_2, L) \ < \ \dim \MS
\]
for any even integers  $n_1, n_2$ with $n_1+n_2 = n$. This boils down to $n < n_1n_2 (g-1) $, which holds for $g \ge 3$. 
\end{proof}

 \subsection{Orthogonal Hecke curves on $\MO$} 
Again, the main reference in this subsection for a construction of  orthogonal Hecke curves will be  \cite{CCL22}.
 
 For a line bundle $L$ on $C$,  an $L$-valued orthogonal bundle of  rank $n$  is a vector bundle $W$ of rank $n$ equipped with an $L$-valued orthogonal form
$b\colon W \otimes W \rightarrow L.$  Let $\MO$  be  the moduli space of $L$-valued    orthogonal bundles of rank $n$. The moduli space $\mathcal M O_C(2r, L)$ has several irreducible components, due to the invariants $ \det (W)$ and the 2nd Stiefel--Whitney class $w_2(W)$ (see  \cite[\S2]{CCH21}).  
 By the morphism forgetting the orthogonal forms,  each irreducible component of the moduli space $\MO$ is sent to  a subvariety of $\rSU_C(n, \frac{1}{2} n \ell)$, where $\ell = \deg (L)$.

As in \cite{CCL22}, we assume $n \ge 5$ throughout this paper. The reason behind this convention is that the moduli space $\mathcal M O_C(n, L)$ has Picard number one for $n \ge 5$, while  $\mathcal M O_C(4, L)$ has Picard number two. Accordingly, the minimality of the orthogonal Hecke curves was discussed in \cite{CCL22} for $n \ge 5$.
We remark that there is a standard construction of orthogonal bundles of low rank from vector bundles, described in \cite{CH22}.

The construction of orthogonal Hecke curves on $\MS$ is a little bit different from that of Hecke curves on $\rSU_C(n,d)$:  the dimension of the involved subspaces are doubled.

For an isotropic subspace $\Theta \subset W_x^*$ of dimension two, let $W^\Theta$ be the Hecke modification:
$$0 \rightarrow W^{\Theta} \rightarrow W \rightarrow (W_x/\Theta^{\perp}) \otimes \mathcal O_x \rightarrow 0.$$
  Then there is an $L^*(x)$-valued  symmetric form
$$b^{\Theta}\colon(W^{\Theta})^* \rightarrow W^{\Theta} \otimes L^*(x) $$
on $V:=(W^{\Theta})^*$ such that   $\ker b^{\Theta}_x$ has codimension four in $V_x=(W^{\Theta})^*_x$.

For an isotropic  subspace $\wp \subset  W^{\Theta} _x$ of dimension  two with  $\wp^{\perp} \subset V_x$, we have the Hecke modification
$$0 \rightarrow V^{\wp} \rightarrow V  \rightarrow (V_x/\wp^{\perp}) \otimes \mathcal O_x \rightarrow 0.$$
 Then $V^{\wp}$ is equipped with a symmetric form  induced from $b^{\Theta}$, and it is an  
 $L^*$-valued  nondegenerate (orthogonal) form if and only if $\ker b^{\Theta}_x \subset \wp^{\perp}$. 
 %
  In particular   when $\wp_0$ is in the kernel of $W_x^{\Theta} \rightarrow W_x$, we have $V^{\wp_0} \cong W^*$.
  
Note that the space of two-dimensional isotropic subspaces $\wp  \subset W_x^\Theta$  such that 
\[
 \ker b^{\Theta}_x \ \subset \ \wp^{\perp} \ \subset  \ V_x \ = \ (W^{\theta})^*_x
\]
is the isotropic  Grassmannian of 2-dimensional subspaces of  $V_x/\ker b_x^{\Theta} \cong \mathbb C^4$, which is  a disjoint union  of two  projective lines. Let $\mathrm{IG}(2, V_x/\ker b_x^{\Theta})$ be the line  containing the point $\wp_0^\perp / \ker b_x^{\Theta}$. 
 Then the family 
$$ \{ V^{\wp} \: : \: \wp  \in \IG(2, V_x/\ker b_x^{\Theta})  \}$$
of $L^*$-valued orthogonal bundles gives a deformation of $W^*$. 

Under the assumption that $g \ge 5, n \ge 5$ and  that $W \in \MO$ is a generic point, by \cite[Lemma 4.7]{CCL22} the dual family $\{  ( V^{\wp})^* \}$ gives a    rational curve on $\MO$ passing through $W$, called an \textit{orthogonal Hecke curve}.
Also it was shown  in \cite[theorem 5.3]{CCL22} that these  curves have minimal degree among the rational curves passing through a generic point $W \in \MO$.

Again, we show the following.
\begin{proposition} \label{prop: contained in smooth locus orthogonal} Assume $g \ge 5$ and $n \ge 5$. 
For a generic point $W \in \MO$, every orthogonal Hecke curve  passing through $W$ is contained in the smooth locus of $\MO$. 
\end{proposition}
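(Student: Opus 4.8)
The plan is to imitate the proof of Proposition \ref{prop: contained in smooth locus symplectic}, substituting orthogonal data for symplectic data at each step. By \cite[Lemma 4.7]{CCL22}, for a generic $W \in \MO$ every orthogonal Hecke curve through $W$ already lies in the locus of \emph{stable} orthogonal bundles, so it will be enough to show that such a curve avoids the non-regularly stable bundles, i.e.\ the stable orthogonal bundles whose automorphism group strictly contains the centre. Just as in the symplectic situation (and as recorded in \cite{CCH21}), these are exactly the bundles admitting an orthogonal splitting $W_1 \perp W_2$ into orthogonal subbundles of ranks $n_1$ and $n_2$ with $n_1+n_2=n$. Thus the whole statement reduces to proving that an orthogonal Hecke curve through a generic $W$ does not meet this non-regularly stable locus.

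To carry this out I would set up two dimension estimates. First, the non-regularly stable locus is contained in a finite union --- over the decompositions $n=n_1+n_2$ and the finitely many admissible discrete invariants ($\det$ and $w_2$) of the summands --- of the images of the orthogonal-sum maps $\mathcal{M}O_C(n_1,L)\times\mathcal{M}O_C(n_2,L)\to\MO$, each source having dimension $(g-1)\binom{n_1}{2}+(g-1)\binom{n_2}{2}$ (with the evident modification in the harmless low-rank cases $n_i\le 2$, where the factor is finite or acquires a torus direction). Second, exactly as in the construction recalled above, an orthogonal Hecke curve through a fixed bundle $W_0$ is pinned down by the point $x\in C$ together with the isotropic $2$-plane $\Theta\subset (W_0)_x^*$: everything else ($W_0^\Theta$, the form $b^\Theta$, the subspace $\ker b^\Theta_x$, its lift $\wp_0$, and the line $\IG(2,-)$ through $\wp_0^\perp/\ker b^\Theta_x$) is then determined. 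Hence the orthogonal Hecke curves through $W_0$ are parameterized by a subvariety of the relative isotropic Grassmannian over $C$ with fibre $\mathrm{OG}(2,n)$, of dimension $1+\dim\mathrm{OG}(2,n)=1+\bigl(2(n-2)-3\bigr)=2n-6$. Combining the two and using the identity $\binom{n}{2}-\binom{n_1}{2}-\binom{n_2}{2}=n_1n_2$ together with $\dim\MO=(g-1)\binom{n}{2}$, the desired estimate reduces --- just as the symplectic count reduced to $n<(g-1)n_1n_2$, now with $2n-6$ in place of $n$ --- to
\[
2n-6 \ < \ (g-1)\,n_1n_2 .
\]
Since $n_1n_2\ge n-1$, with equality exactly when one rank is $1$, this follows from $2n-6<(g-1)(n-1)$, which already holds for $g\ge 3$ and hence comfortably under the standing hypothesis $g\ge 5$; a generic $W$ then lies on no such curve.

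The genuinely non-formal ingredient, and the step I expect to need the most care, is the identification in the first paragraph of the singular locus of the stable part of $\MO$ with the orthogonal-direct-sum locus: unlike in $\rSU_C(n,d)$, a stable orthogonal bundle need not be semistable as a plain vector bundle, so one must check that any automorphism other than $\pm\mathrm{Id}$ is semisimple with an eigenspace decomposition on which $b$ restricts nondegenerately, which forces the orthogonal splitting --- this is what we import from \cite{CCH21}. Everything else is bookkeeping: the only structural change from Proposition \ref{prop: contained in smooth locus symplectic} is that the fibre of the ``Hecke curves through a fixed bundle'' family is the orthogonal Grassmannian $\mathrm{OG}(2,n)$ rather than $\mathbb{P}(W_0^*)$, raising the parameter count from $n$ to $2n-6$; since this remains linear in $n$ while $\dim\MO$ grows quadratically, the inequality survives with a wide margin, and the low-rank summands are disposed of directly.
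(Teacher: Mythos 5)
Your proposal is correct and follows essentially the same route as the paper: reduce via \cite[Lemma 4.7]{CCL22} to avoiding the non-regularly stable locus $W_1 \perp W_2$, then compare $\dim \IG(2,W_0^*) = 2n-6$ plus $\dim \mathcal{M}O_C(n_1,L) + \dim \mathcal{M}O_C(n_2,L)$ against $\dim \MO$, arriving at the same inequality $2n-6 < n_1n_2(g-1)$. Your extra remarks (the identification of the singular locus via \cite{CCH21}, the low-rank summands, and the observation that the inequality already holds for $g \ge 3$) are elaborations consistent with, but not departures from, the paper's argument.
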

\begin{proof}
By \cite[Lemma 4.7]{CCL22}, any orthogonal Hecke curve passing through a generic point $W$ stays inside the locus of  stable orthogonal bundles. To see that it is contained in the smooth locus, it suffices to show that it does not touch the locus of non-regularly stable orthogonal bundles which are of the form $W_1 \perp W_2$ for some stable orthogonal subbundles $W_1$ and $W_2$. 

This can be checked by dimension count: The locus of non-regularly stable bundles is contained in a finite union of the images of $\mathcal M O_C(n_1, L) \times \mathcal M O_C(n_2, L)$, where $n_1+n_2 = n$. Since orthogonal Hecke curves passing through a point $[W_0]$ in this subvariety are parameterized by $\IG(2, W_0^*) $, it suffices to check the inequality:
\[
\dim \IG(2,  W_0^*)  + \dim \mathcal M O_C(n_1, L) + \dim  \mathcal M O_C(n_2, L) \ < \ \dim \MO
\]
for any  integers  $n_1, n_2$ with $n_1+n_2 = n$. Since $\dim IG(2, W_0^*) = 2n-6$,  this boils down to $2n-6 < n_1n_2 (g-1) $, which holds for $g , n \ge 5$. 
\end{proof}

\subsection{Minimal rational curves}

 Let $M$ be a projective variety. Let $ \mathcal K$ be an irreducible component of the Hilbert scheme of complete curves on $M$ such that  generic members of $\mathcal K$   cover an open subset of the smooth locus  $M^\circ$ of $M$.  For a generic point $x \in M$, denote by $\mathcal K_x$ the subscheme  of $\mathcal K$ consisting of members of $\mathcal K$ passing through $x$.  Assume that  for a generic point $x \in M$, every member of $\mathcal K_x$ is an irreducible smooth rational curve contained in  $M^\circ$ and $\mathcal K_x$ is an irreducible  complete variety.
 In this case, we call $\mathcal K$ a {\it minimal rational component} of $M$.
 
A covering family of rational curves having minimal degree gives a minimal rational component. More precisely, an irreducible component  $\mathcal K$ is a minimal rational component of $M$ if it satisfies the following conditions:
\begin{enumerate}
\item[(i)] For a generic $x \in M$, every member of $\mathcal K_x$  is irreducible smooth rational curve  contained in $M^\circ$.
\item[(ii)] The locus swept out by the curves in $\mathcal K$ is dense in $M$.
\item[(iii)] For a fixed ample line bundle $\xi$ on $M$, the degree of members of $\mathcal K$ with respect to $\xi$ is minimal among the curves in an irreducible family satisfying (1) and (2). 
\end{enumerate}

Let $x \in M$ be a generic point. Define the tangent map $\tau_x\colon \mathcal K_x \dashrightarrow \mathbb P(T_x(M))$ by 
 $$\tau_x([R]) =[T_xR] \in \mathbb P(T_x(M)),$$ 
 where $R$ is a smooth rational curve in $M^\circ$ passing through $x$. The closure $\mathcal C_x$ of the image ${\tau_x (\mathcal K_x )} \subset  \mathbb P(T_x(M))$ is called the {\it variety of minimal rational tangents} (VMRT for short) at $x$ associated with $\mathcal K$.

 The following  is \cite[Theorem 2]{H01}, which connects the theory of VMRT and the simplicity of the tangent bundle.
  \begin{proposition} \label{nondegsimple}
 Let $M$ be a Fano variety which has a minimal rational component $\mathcal K$. 
 If the VMRT  $\mathcal C_x$ at a generic point $x \in M$ is non-degenerate in $\mathbb P T_xM$, then the tangent bundle $T(M^\circ)$ is simple.
 \end{proposition}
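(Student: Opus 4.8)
The plan is to prove simplicity of $T(M^\circ)$ by showing that every global endomorphism $\varphi \in H^0\bigl(M^\circ, \End(T M^\circ)\bigr)$ is a constant multiple of $\Iden$. The mechanism is to restrict $\varphi$ to the minimal rational curves through a general point and to read off strong constraints from the splitting type of $T M$ along such curves. So fix a general point $x \in M^\circ$ and a general member $R \in \mathcal K_x$. Since the curves in $\mathcal K$ sweep out a dense open subset of $M^\circ$, the curve $R$ is free, so $T M|_R$ is globally generated; together with the minimality of $\mathcal K$ this forces the \emph{standard} splitting $T M|_R \cong \cO_{\PP^1}(2) \oplus \cO_{\PP^1}(1)^{\oplus p} \oplus \cO_{\PP^1}^{\oplus q}$ (with $p + q + 1 = \dim M$), in which the $\cO(2)$-summand is $T_R$; see \cite{H01} and the references therein for the standardness of general minimal rational curves.

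Restricting gives $\varphi|_R \in H^0\bigl(R, \End(T M|_R)\bigr)$. Because $\Hom(\cO_{\PP^1}(2), \cO_{\PP^1}(1)) = \Hom(\cO_{\PP^1}(2), \cO_{\PP^1}) = 0$, the composite $T_R \hookrightarrow T M|_R \xrightarrow{\varphi|_R} T M|_R$ projects to zero in the $\cO(1)^{\oplus p}$- and $\cO^{\oplus q}$-summands, hence lands in $T_R$. Thus $\varphi|_R$ preserves $T_R$ and acts on it through a global section of $\End(\cO_{\PP^1}(2)) = \cO_{\PP^1}$, i.e.\ by a scalar; in particular the line $T_x R \subset T_x M$ is an eigen-direction of $\varphi_x \colon T_x M \to T_x M$.

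Letting $[R]$ range over $\mathcal K_x$, this shows that every point in the image of the tangent map $\tau_x$, and hence every point of its closure $\mathcal C_x$, is an eigen-direction of the \emph{single} operator $\varphi_x$. The locus of eigen-directions of $\varphi_x$ is the finite disjoint union $\bigsqcup_{\lambda}\PP\bigl(\ker(\varphi_x - \lambda\,\Iden)\bigr)$ over the distinct eigenvalues $\lambda$, which is closed in $\PP(T_x M)$; since $\mathcal C_x$ is irreducible (because $\mathcal K_x$ is), it must lie inside a single one of these linear subspaces, say $\PP\bigl(\ker(\varphi_x - \lambda(x)\,\Iden)\bigr)$. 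Non-degeneracy of $\mathcal C_x$ means it spans $\PP(T_x M)$, so this subspace is all of $\PP(T_x M)$; equivalently $\varphi_x = \lambda(x)\,\Iden$. This holds for all $x$ in a dense open subset of $M^\circ$, hence on that open set $\varphi = \lambda\cdot\Iden$ with $\lambda = \tfrac{1}{\dim M}\operatorname{tr}(\varphi)$ a regular function on $M^\circ$; as $M$ is normal and projective with $\operatorname{codim}(M \setminus M^\circ) \ge 2$, one has $H^0(M^\circ, \cO) = \C$, so $\lambda$ is a constant and $\varphi = \lambda\cdot\Iden$ on all of $M^\circ$.

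I expect the only substantial point to be the standard splitting of $T M$ along a general minimal rational curve: this is exactly where the minimality of $\mathcal K$ is used — it is what rules out summands of degree $\ge 2$ other than $T_R$ — and for the applications in this paper one must know that the (symplectic/orthogonal) Hecke curves are minimal and that general ones are standard, which is why the earlier statements placing these curves in the smooth locus are needed. Granting the splitting, the rest is elementary: a global symmetry of $T M$ makes every minimal rational tangent an eigen-direction at the point, irreducibility of $\mathcal C_x$ selects one eigenvalue, and non-degeneracy forbids any proper eigenspace; the only remaining care is the routine passage from a generic point of $M^\circ$ to all of it.
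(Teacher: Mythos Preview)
The paper does not give its own proof of this proposition: it is quoted verbatim as \cite[Theorem 2]{H01} and used as a black box. Your argument is essentially Hwang's original proof --- restrict an endomorphism of $T(M^\circ)$ to a general standard minimal rational curve, use the splitting $TM|_R \cong \cO(2)\oplus\cO(1)^p\oplus\cO^q$ to see that $T_xR$ is an eigenline of $\varphi_x$, then invoke irreducibility and nondegeneracy of $\mathcal C_x$ to force $\varphi_x$ to be scalar --- and it is correct.

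Two small remarks. First, your sentence ``$R$ is free, so $TM|_R$ is globally generated; together with minimality this forces the standard splitting'' compresses a nontrivial step: freeness alone gives nonnegativity of the summands, while ruling out extra summands of degree $\ge 2$ uses bend-and-break and the completeness/irreducibility of $\mathcal K_x$; you are right to defer to \cite{H01} for this. Second, the conclusion $H^0(M^\circ,\cO)=\C$ requires $M$ normal (so that functions extend across the singular locus), which is not explicit in the statement but is of course satisfied in the Fano setting and in all the applications of the paper.
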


For  the moduli space of vector bundles $\M = \rSU_C(n, d)$, it is proven in \cite{H01} that for $g \ge 4$, the irreducible  component  $\mathcal K$  of the Hilbert scheme of $\mathcal M$ containing  Hecke curves is a minimal rational component of $\M$. In this case,  given  a generic point $W \in  \mathcal M$,  $\mathcal K_{W}$ is given by
\[
\mathcal K_{W} = \bigcup_{[\theta] \in \mathbb P(  W ^*)} \mathbb P(W_x^{\theta}/\ell_0) 
\ \simeq \ \mathbb P(T_{\pi}^*) 
\]
where $T_{\pi}$ is the vertical tangent bundle of $\pi\colon \mathbb P(W^*) \rightarrow C$.
In particular for $n= 2$, we have $\mathcal K_{W} = \mathbb P(W^*)$. 

Moreover it is shown in \cite[Theorem 3.1, Theorem 3.7]{HR04} that the tangent map at a generic point $W \in \M$ is biregular to the image for $g \ge 5$ and birational  for $g =4$.

Finally we discuss the case of $\MS$ and $\MO$.
By the result \cite[Theorem 5.2]{CCL22} on the minimality of degree, we can see that there is a minimal rational component $\mathcal K$ of $\MS$ containing symplectic Hecke curves such that $\mathcal K_{W}$  for a   generic element $W$  is given by
\begin{eqnarray*}
\mathcal K_{W}& =&  \mathbb P(W^*). 
\end{eqnarray*}
 
 Similarly by  \cite[Theorem 5.3]{CCL22},  there is a minimal rational component $\mathcal K$ of $\MO$ containing orthogonal Hecke curves such that $\mathcal K_{[W]}$  for a  generic element $[W] $  is given by
\[
\mathcal K_{W} =  IG(2, W^*). 
\]

 \section{Smoothness of    Hecke curves}
 In this section, we show the smoothness of the symplectic and orthogonal Hecke curves. 
\begin{proposition} \label{symplecticsmooth}
Assume $g \ge 4$ and $n \ge 4$.
  Then any symplectic Hecke curve passing through a generic point $W \in \MS$ is smooth.
\end{proposition}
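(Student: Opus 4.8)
The plan is to prove that the classifying morphism of the symplectic Hecke curve,
\[
f\colon \mathbb P^1=\mathbb P\bigl(V_x/\ker\omega^\theta_x\bigr)\longrightarrow \MS,\qquad [\ell]\longmapsto\bigl[(V^\ell)^*\bigr],
\]
is a closed immersion; here $x\in C$, $\theta\subset W_x^*$ is one-dimensional, $V=(W^\theta)^*$, and $\ell_0=\Ker(W^\theta_x\to W_x)$ gives $f([\ell_0])=[W]$. By Proposition \ref{prop: contained in smooth locus symplectic} the image already lies in the smooth locus of $\MS$, and each $(V^\ell)^*$ is a stable symplectic bundle, so it suffices to check that $f$ is unramified at every point of $\mathbb P^1$ and that $f$ is injective.

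\emph{Unramifiedness.} I would compute $df_{[\ell]}$ via the symplectic Kodaira--Spencer map and compare with Proposition \ref{KScomp}. If $df_{[\ell]}=0$, then its image under the natural map $T_{f([\ell])}\MS\to H^1(C,\End(V^\ell))$ also vanishes; but by Proposition \ref{KScomp} (applied at the point $\ell$, with $U=(\ker\omega^\theta_x)^\perp$) this image equals $\delta\bigl(\widehat{\beta_x}(\phi)\bigr)$ for a generator $\phi$ of the one-dimensional space $T_{[\ell]}\mathbb P^1=\Hom(\ell^\perp/U^\perp,\,V_x/\ell^\perp)$. Since $\widehat{\beta_x}$ is injective, we would get $0\ne\widehat{\beta_x}(\phi)\in\Ker\delta$, which by the long exact sequence of (\ref{Heckeseq}) equals the image of the restriction map $\Hom(V^\ell,V)\to\Hom(V^\ell_x,\,V_x/\ell^\perp)$. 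The tautological inclusion $\beta\in\Hom(V^\ell,V)$ has $\Im(\beta_x)=\ell^\perp$ and so restricts to $0$; hence $df_{[\ell]}=0$ would produce some $\psi\in\Hom(V^\ell,V)$ not proportional to $\beta$. Saturating $\Im(\psi)$ inside $V$ and tracking the induced $L^*$- and $L^*(x)$-valued forms, I expect to extract from $\psi$ a nonzero isotropic subbundle $E\subsetneq W$ whose slope differs from $\mu(W)$ by $O(1)$.

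\emph{Injectivity.} Suppose $(V^\ell)^*\cong(V^{\ell'})^*$ with $[\ell]\ne[\ell']$; equivalently $V^\ell\cong V^{\ell'}$ while $V^\ell\ne V^{\ell'}$ as subsheaves of $V$. Composing an isomorphism $V^\ell\stackrel{\sim}{\to}V^{\ell'}$ with the two inclusions into $V$ gives a homomorphism $V^\ell\to V$ linearly independent from the inclusion, and then, exactly as in the previous step, a nonzero isotropic subbundle of $W$ of slope close to $\mu(W)$. (For this one can also run the argument of \cite{NR78} and \cite{H01} for ordinary Hecke curves, keeping track of the symplectic structure.)

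In both steps the proof is then finished by ruling out such an isotropic subbundle for a generic $W\in\MS$. As warned in the Introduction, this is exactly where one cannot simply quote the smoothness of Hecke curves on $\rSU_C(n,\frac{1}{2}n\ell)$ from \cite{NR78} or \cite{H01}, since a generic symplectic bundle need not be $(1,1)$-stable as a vector bundle; instead one uses the $\delta$-stability estimates of \cite[\S 4.1]{CCL22}, according to which a generic $W\in\MS$ has every isotropic subbundle of slope less than $\mu(W)$ by an amount growing linearly in $g$. The hypothesis $g\ge 4$ (rather than merely $g\ge 3$, which only guarantees the existence of the curve) is what makes these slope inequalities strict. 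I expect the main obstacle to be the bookkeeping in the two middle paragraphs: performing the extraction of the destabilising subsheaf carefully enough that it can always be chosen isotropic, so that $\delta$-stability---rather than full $(1,1)$-stability as a vector bundle---is enough to conclude.
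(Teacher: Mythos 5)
Your proposal follows essentially the same route as the paper: split the claim into unramifiedness (injectivity of the Kodaira--Spencer map, using that $\widehat{\beta_x}$ is injective and reducing to the injectivity of $\delta$ via the long exact sequence of (\ref{Heckeseq})) and set-theoretic injectivity of the classifying map, with both ultimately resting on stability properties of a generic $W\in\MS$. The structural skeleton is correct, and your observation that a nonzero element of $\Ker\delta$ forces an element of $\Hom(V^\ell,V)$ not proportional to $\beta$ is exactly the mechanism the paper uses.

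The difference is that the two places where you write ``I expect to extract from $\psi$ a nonzero isotropic subbundle\dots'' are precisely the steps the paper does \emph{not} re-derive: it closes them by citation. For unramifiedness the paper reduces to the two vanishings $h^0(\End(V^\ell))=1$ (regular stability) and $h^0((V^\ell)^*\otimes V)=1$, the latter being exactly \cite[Lemma 4.2]{CCL22} (valid for $g\ge 3$ and $V$ generic); so no fresh extraction of a destabilising isotropic subsheaf is needed. For injectivity the paper does not argue via $\hom(V^\ell,V)$ at all, but instead invokes the $\delta$-stability machinery you correctly anticipated: by \cite[Lemma 4.1]{CCL22} a generic $W$ is $2$-stable for $g\ge 4$, $n\ge 4$, hence every $V^\ell$ is $1$-stable, and then the argument of \cite[Lemma 4.5]{CCL22} shows $V^{\ell_1}\cong V^{\ell_2}$ forces $\ell_1=\ell_2$; this is where the hypothesis $g\ge 4$ enters, not in sharpening slope inequalities as you guessed. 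So your plan is sound and correctly identifies $\delta$-stability as the right substitute for $(1,1)$-stability, but as written it leaves the central vanishing statements as sketches; they are genuine gaps in the proposal, though ones that are filled verbatim by \cite[Lemmas 4.1, 4.2, 4.5]{CCL22} rather than by new work.
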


\begin{proof}
 From the construction, the family 
 \begin{equation} \label{sympHecke}
\{V^\ell \: : \:  \ker \omega^{\theta}_x \subset \ell^{\perp} \subset V_x\}
\end{equation}
  gives  a deformation of $V^{\ell}$ along a subspace $U:=(\ker \omega_x^\theta)^\perp \cong \mathbb C^2$. 
   By Proposition \ref{KSell}, the Kodaira--Spencer map 
\begin{equation} \label{KSsymplectic}
T_{\ell}(\mathbb P(V_x/U^{\perp})) = \Hom(\ell^{\perp}/U^{\perp}, V_x/\ell^{\perp}) \rightarrow H^1(C, \End(V^{\ell}))
\end{equation}
associated to 
this family is given by  the composition
\begin{eqnarray}  
\Hom(\ell^{\perp}/U^{\perp}, V_x/\ell^{\perp}) \stackrel{\widehat{\beta_x}}{\rightarrow} \Hom(V_x^{\ell}, V_x/\ell^{\perp}) \stackrel{\delta}{\rightarrow}  H^1(C, \mathrm{End}(V^{\ell})),
\end{eqnarray}
where  $\widehat{\beta_x} $ and $\delta$ are induced from
\begin{equation} \label{betaseq}
0 \rightarrow V^{\ell} \stackrel{\beta}{\rightarrow} V \rightarrow (V_x/\ell^{\perp}) \otimes \mathcal O_x \rightarrow 0.
\end{equation}
Note that this  can be geometrically understood as the composition map 
 \[
 T_{\ell} (\mathbb P (V_x / U^\perp)) \stackrel{\widehat{\beta_x}}{\longrightarrow } T_{[\beta]} \mathrm{Quot} (V)\stackrel{\delta}{\longrightarrow} H^1(C, \ad (V^{\ell})) = T_{[V^{\ell}]} \mathcal M S_C(n, L^*),
 \]
 where    the tangent space $T_{[\beta]} \mathrm{Quot} (V)$ of the Quot scheme of $V$ is given by 
 \[
 T_{[\beta]} \mathrm{Quot} (V) = H^0(C, \mathrm{Hom} (V^{\ell}, (V_x/\ell^{\perp}) \otimes \mathcal O_x) ) \cong \mathrm{Hom} (V_x^{\ell}, V_x/\ell^{\perp}).
 \]
 Hence to show that the map 
 \[
 \phi_U\colon \mathbb P (U) \cong \mathbb P^1 \to \MS
 \]
 which gives the symplectic Hecke curve (\ref{sympHecke}) is an immersion, we need to show that the map (\ref{KSsymplectic}) is  injective.  
 
 Since $\widehat{\beta_x}$ is injective by definition of Quot schemes, we need to check that $\delta$ is injective. The map $\delta$ fits into the long exact sequence associated to (\ref{betaseq}) tensored by $(V^{\ell})^*$:
 \[
 0 \to  H^0(C, \mathrm{End}(V^{\ell})) \to H^0( C, (V^{\ell})^* \otimes V ) \to \mathrm{Hom} (V^{\ell}, (V_x/\ell^{\perp})  ) \stackrel{\delta}{\to}   H^1(C, \mathrm{End}(V^{\ell})) 
 \]
   Hence $\delta$ is injective everywhere if we know:
 \begin{itemize}
 \item $\dim H^0( C, \mathrm{End}(V^{\ell}) ) =1$ for all $\ell$ and
 \item $\dim H^0( C, (V^{\ell})^* \otimes V )  =1$ for all $\ell$.
 \end{itemize}
 The first condition holds if $V^{\ell}$ is  regularly stable. By \cite[Lemma 4.2]{CCL22}, the second condition holds 
 if every point $V$ is a generic point   and $g \ge 3$.
 
 Now it remains to show that the map $\phi_U$ is injective. It was shown  in \cite[Lemma 4.5]{CCL22} that $\phi_U$ is generically injective if $g \ge 3$ and $n \ge 4$. Its proof can be slightly modified to show the injectiveness (under a stronger bound on $g$ and $n$).
 The point of the proof was to choose $W = (V^{\ell_0})^*$ as a ``1-stable'' symplectic bundle (see \cite[\S 4.1]{CCL22}). By the same argument, if we choose $W$ to be  2-stable, then every bundle $V^\ell$ can be shown to be 1-stable, and hence $V^{\ell_1} \cong V^{\ell_2}$ implies $\ell_1 = \ell_2$. By \cite[Lemma 4.1]{CCL22} a generic point of $\MS$ is 2-stable for $g \ge 4, n \ge 4$ and we are done.
\end{proof} 
 
\begin{proposition} \label{orthosmooth}
Assume $g \ge 5$ and $n \ge 5$.
  Then any orthogonal Hecke curve passing through a generic point $W \in \MO$ is smooth.
\end{proposition}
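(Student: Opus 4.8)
The plan is to mirror the proof of Proposition \ref{symplecticsmooth} step by step, replacing the symplectic Hecke family by the orthogonal one and the projective line $\mathbb{P}(U)$ by the isotropic Grassmannian $\IG(2, V_x/\ker b_x^\Theta)$. First I would recall from \S 2.5 that an orthogonal Hecke curve is the dual of the family $\{V^\wp : \wp \in \IG(2, V_x/\ker b_x^\Theta)\}$, which is a deformation of $V^{\wp_0} \cong W^*$ along the $4$-dimensional subspace $U := (\ker b_x^\Theta)^\perp \subset V_x^{\wp_0}$. By Proposition \ref{prop: KS map codimension 4}, the Kodaira--Spencer map of this family at $\wp_0$ is the composition
\[
\Hom(\wp_0^\perp/U^\perp, V_x/\wp_0^\perp) \stackrel{\widehat{\beta_x}}{\longrightarrow} \Hom(V_x^{\wp_0}, V_x/\wp_0^\perp) \stackrel{\delta}{\longrightarrow} H^1(C, \End(V^{\wp_0})),
\]
and the restriction of this to the tangent space of the isotropic sub-Grassmannian $\IG(2, V_x/\ker b_x^\Theta)$ is what must be shown to be injective in order to conclude that the map $\phi_U\colon \IG(2, V_x/\ker b_x^\Theta) \cong \mathbb{P}^1 \to \MO$ giving the orthogonal Hecke curve is an immersion.

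As before, $\widehat{\beta_x}$ is injective by the definition of the Quot scheme, so it remains to check that $\delta$ is injective on the relevant subspace. Taking the long exact sequence obtained from the defining sequence of the Hecke modification tensored by $(V^{\wp_0})^*$,
\[
0 \to H^0(C, \End(V^{\wp_0})) \to H^0(C, (V^{\wp_0})^* \otimes V) \to \Hom(V^{\wp_0}, V_x/\wp_0^\perp) \stackrel{\delta}{\to} H^1(C, \End(V^{\wp_0})),
\]
we see that $\delta$ is injective as soon as $\dim H^0(C, \End(V^{\wp_0})) = 1$ and $\dim H^0(C, (V^{\wp_0})^* \otimes V) = 1$. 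The first holds because $V^{\wp_0} \cong W^*$ is regularly stable (Proposition \ref{prop: contained in smooth locus orthogonal} guarantees the orthogonal Hecke curve stays in the smooth, hence regularly stable, locus for generic $W$ and $g, n \ge 5$); the second is exactly \cite[Lemma 4.2]{CCL22}, valid for a generic point and $g \ge 3$ (one should check the orthogonal analogue of that lemma is what is cited in \cite{CCL22}, or else invoke the orthogonal version of \cite[Lemma 4.7]{CCL22} which establishes the needed $h^0$ bound). Strictly speaking one should run this argument not only at $\wp_0$ but at every point $\wp$ of the curve, which is fine since every $V^\wp$ is again regularly stable and a generic orthogonal bundle.

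Finally I would show that $\phi_U$ is injective, i.e.\ has no self-intersections. Following the proof of Proposition \ref{symplecticsmooth}, the key is the $\delta$-stability notion of \cite[\S 4.1]{CCL22}: \cite[Lemma 4.7]{CCL22} shows $\phi_U$ is generically injective when $W$ is chosen ``$1$-stable'' as an orthogonal bundle, and the same argument with $W$ chosen ``$2$-stable'' forces every $V^\wp$ to be $1$-stable, whence $V^{\wp_1} \cong V^{\wp_2}$ implies $\wp_1 = \wp_2$. The orthogonal analogue of \cite[Lemma 4.1]{CCL22} should give that a generic point of $\MO$ is $2$-stable once $g$ and $n$ are at least $5$, completing the argument. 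The main obstacle I anticipate is purely bookkeeping: verifying that the relevant lemmas of \cite{CCL22} (the $h^0$-vanishing, the $\delta$-stability of the generic orthogonal bundle, and the $1$-stability propagation along the Hecke deformation) are stated — or go through with only cosmetic changes — in the orthogonal case with the codimension-four Hecke modifications and the isotropic Grassmannian parameter space, rather than the codimension-two symplectic case treated explicitly above; no genuinely new idea beyond the symplectic proof is needed.
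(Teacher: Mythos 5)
Your proposal follows the paper's proof essentially step by step: the same reduction of smoothness to (a) injectivity of the Kodaira--Spencer map via Proposition \ref{prop: KS map codimension 4}, with $\widehat{\beta_x}$ injective by the Quot-scheme interpretation and $\delta$ injective once $h^0(\End(V^{\wp}))=1$ ($V^{\wp}$ regularly stable, guaranteed by Proposition \ref{prop: contained in smooth locus orthogonal}) and $h^0((V^{\wp})^*\otimes V)=1$ (from \cite[Lemma 4.2]{CCL22}); and (b) injectivity of the parametrizing map via the $\delta$-stability notion of \cite[\S 4.1]{CCL22}. Part (a) matches the paper exactly.

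The one concrete discrepancy is in step (b). You propose to take $W$ to be $2$-stable so that every $V^{\wp}$ is $1$-stable; the paper instead takes $W$ to be $3$-stable so that every $V^{\wp}$ is $2$-stable, and invokes \cite[Lemma 4.1]{CCL22} to say that a generic point of $\MO$ is $3$-stable for $g\ge 5$, $n\ge 5$. The difference is not cosmetic: the orthogonal Hecke modification is performed along \emph{two}-dimensional subspaces $\Theta$ and $\wp$ (rather than the one-dimensional $\theta$ and $\ell$ of the symplectic case), so the stability parameter degrades by more at each stage, and recovering the two-dimensional subspace $\wp$ from the isomorphism class of $V^{\wp}$ --- which is what makes $V^{\wp_1}\cong V^{\wp_2}\Rightarrow\wp_1=\wp_2$ work --- requires $2$-stability of $V^{\wp}$, not merely $1$-stability. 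You flagged this as bookkeeping to be verified, which is fair, but as written your stability thresholds are one step too low and the injectivity argument would not close with them.
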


\begin{proof} 
From the construction of orthogonal Hecke curves, the family 
 \begin{equation} \label{orthHecke}
\{V^\wp \: : \:  \Ker \omega^{\Theta}_x \subset \wp^{\perp} \subset V_x\}
\end{equation}
  gives  a deformation of $V^{\wp}$ along a subspace $U:=(\Ker \omega_x^\Theta)^\perp \cong \mathbb C^4$.  By Proposition \ref{prop: KS map codimension 4}, the Kodaira--Spencer map 
\begin{equation} \label{KSortho}
T_{\wp}Gr(2, V_x/U^{\perp}) = \Hom(\wp^{\perp}/U^{\perp}, V_x/\wp^{\perp}) \rightarrow H^1(C, \End(V^{\wp}))
\end{equation}
associated to 
this family is given by  the composition
\begin{eqnarray}  
\Hom(\wp^{\perp}/U^{\perp}, V_x/\wp^{\perp}) \stackrel{\widehat{\beta_x}}{\rightarrow} \Hom(V_x^{\wp}, V_x/\wp^{\perp}) \stackrel{\delta}{\rightarrow}  H^1(C, \mathrm{End}(V^{\wp})),
\end{eqnarray}
where  $\widehat{\beta_x} $ and $\delta$ are induced from
\begin{equation} \label{betaseq2}
0 \rightarrow V^{\wp} \stackrel{\beta}{\rightarrow} V \rightarrow (V_x/\wp^{\perp}) \otimes \mathcal O_x \rightarrow 0.
\end{equation}
Note that this  can be geometrically understood as the composition map 
 \[
 T_{\ell} Gr(2,V_x / U^\perp) \stackrel{\widehat{\beta_x}}{\longrightarrow } T_{[\beta]} \mathrm{Quot} (V)\stackrel{\delta}{\longrightarrow} H^1(C, \ad (V^{\wp})) = T_{[V^{\wp}]} \mathcal M O_C(n, L^*),
 \]
 where    the tangent space $T_{[\beta]} \mathrm{Quot} (V)$ of the Quot scheme of $V$ is given by 
 \[
 T_{[\beta]} \mathrm{Quot} (V) = H^0(C, \mathrm{Hom} (V^{\wp}, (V_x/\wp^{\perp}) \otimes \mathcal O_x) ) \cong \mathrm{Hom} (V_x^{\wp}, V_x/\wp^{\perp}).
 \]
 Hence to show the immersedness of the orthogonal Hecke curve (\ref{orthHecke}), it suffices to show that the map (\ref{KSortho}) is  injective. 
 
 Since $\widehat{\beta_x}$ is injective by definition of Quot schemes, we need to check that $\delta$ is injective. The map $\delta$ fits into the long exact sequence associated to (\ref{betaseq2}) tensored by $(V^{\wp})^*$:
 \[
 0 \to  H^0(C, \mathrm{End}(V^{\wp})) \to H^0( C, (V^{\wp})^* \otimes V ) \to \mathrm{Hom} (V^{\wp}, (V_x/\wp^{\perp})  ) \stackrel{\delta}{\to}   H^1(C, \mathrm{End}(V^{\wp})) 
 \]
   Hence $\delta$ is injective everywhere if we know:
 \begin{itemize}
 \item $\dim H^0( C, \mathrm{End}(V^{\wp}) ) =1$ for all $\wp$ and
 \item $\dim H^0( C, (V^{\wp})^* \otimes V )  =1$ for all $\wp$.
 \end{itemize}
 The first condition holds if $V^{\wp}$ is  regularly stable. The second condition holds 
 if $V$ is general and $g \ge 3$ by \cite[Lemma 4.2]{CCL22}.
 
 Now to show that the injectiveness.    as in the symplectic case,
it suffices to  choose $W$ to be 3-stable in order that every bundle $V^\wp$ is 2-stable. By \cite[Lemma 4.1]{CCL22} a generic point of $\MO$ is 3-stable for $g \ge 5, n \ge 5$ and we are done.
\end{proof}

\section{Nondegeneracy of the tangent map}

In this section, we discuss the tangent map   of the variety of minimal rational tangents for the moduli spaces $\M S_C(2r, L) $ and $\MO$. 
In particular, we study the complete linear system which defines the tangent map. This confirms that the image of the tangent map is nondegenerate, and as a consequence we get the simpleness of  the tangent bundle of the moduli space. Basically we follow the computations in \cite{H00}  of the Kodaira--Spencer map of the Hecke curves on the moduli space $\mathcal \rSU_C(2,d)$.

\subsection{Symplectic   bundles}

The tangent space of $\rSU_C(n,d)$ at a stable bundle $W$ is given by $H^1(C, \Endo (W))$, where $\Endo (W)$ is the vector bundle of traceless endomorphisms of $W$. By a similar argument as \cite[Lemma 2.2]{BH21}, the tangent space of $\MS$ at a regularly stable symplectic bundle $W$ is given by $H^1(C, L \otimes \mathrm{Sym}^2W^*)$. In this context, we put $\ad (W) = L \otimes \mathrm{Sym}^2W^*$.

\begin{proposition} \label{prop: tangent map symplectic}
Assume $g \ge 4$ and $n \ge 4$ as in Proposition \ref{symplecticsmooth}.
  Let $\mathcal K$ be  the minimal rational component consisting of symplectic  Hecke curves on  
  $\M S_C(2r, L)$.
%
Then for a generic $W \in \M S_C(2r,L)$, the tangent map 
$$\tau_{W} \colon \mathcal K_{W}=\mathbb P W^*  \to \mathbb P T_{W} \MS = \mathbb P H^1(C, \mathrm{Sym}^2 W^* \otimes L) $$
 is the composition $\Phi_W \circ \iota$,
where $\Phi_W$ is given by the complete linear system $|\mathcal O(1) \otimes \pi^* K_C|$.
Also, $\iota(\theta) = \theta \otimes (\omega(\theta, \,\cdot\,))$  for $\theta \in W_x^*$, 
 and  the image of $\iota(\theta) $ in $\mathbb P(H^1(C, \mathrm{Sym}^2W^* \otimes L))$ is given by the linear functional $H^0(C, \mathrm{Sym}^2 W \otimes L^* \otimes K_C) \rightarrow K_{C,x}$ taking the trace of endomorphisms of $W_x$.
\begin{eqnarray*}
\xymatrix@R+1pc@C+2pc{
\mathcal K_{W}=\mathbb P W^*  \ar[dr]_{\iota} \ar[r] & \mathbb P(\Endo(W))  \ar@{-->}[r]_{\Psi_W}  & \mathbb P H^1(C, \Endo W) \\
 & \mathbb P(\mathrm{Sym}^2 W^* \otimes L ) \ar@{^{(}->}[u] \ar[r]<4pt>_{\Phi_W} & \mathbb P H^1(C, \mathrm{Sym}^2 W^* \otimes L)  \ar@{^{(}->}[u]\\
}
\end{eqnarray*}
\end{proposition}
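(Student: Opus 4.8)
The plan is to carry over to even rank $n=2r$ the Kodaira--Spencer calculation of \cite{H00} for Hecke curves on $\rSU_C(2,d)$, keeping track of the symplectic form throughout. By Proposition \ref{symplecticsmooth}, for generic $W$ every symplectic Hecke curve through $W$ is smooth, so $\tau_W\colon\mathbb P W^*\to\mathbb P T_W\MS$ is a morphism and, for each $[\theta]\in\mathbb P W_x^*$, the Kodaira--Spencer class of the family $\{V^\ell:\ker\omega_x^\theta\subset\ell^\perp\}$ spans a well-defined line in $H^1(C,\mathrm{Sym}^2W^*\otimes L)$. Applying Proposition \ref{KScomp} to this family (as in the proof of Proposition \ref{symplecticsmooth}), the tangent vector to the Hecke curve at $W=(V^{\ell_0})^*$ is $\delta(\widehat{\beta_x}(\xi))$, where $\xi$ spans the one--dimensional space $T_{[\ell_0]}\mathbb P(V_x/U^\perp)$ with $U=(\ker\omega_x^\theta)^\perp$ and $\ell_0=\ker(W_x^\theta\to W_x)$, $\widehat{\beta_x}(\xi)\in\Hom(V_x^{\ell_0},V_x/\ell_0^\perp)$ is a rank--one homomorphism, and $\delta$ is the coboundary of (\ref{betaseq}) tensored by $(V^{\ell_0})^*$. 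Dualizing $V^{\ell_0}\cong W^*$ rewrites this as $\delta(\phi_\theta)$, where $\phi_\theta$ is a rank--one endomorphism of $W$ supported at $x$ and $\delta\colon\End(W)_x\to H^1(C,\Endo W)$ is the associated coboundary. Thus the statement reduces to: (a) identifying $\phi_\theta$ with the symmetric rank--one endomorphism $\iota([\theta])=\theta\otimes\omega(\theta,\,\cdot\,)\in(\mathrm{Sym}^2W^*\otimes L)_x$, and (b) recognizing $\delta$ (precomposed with the Veronese--type map $\iota$) as the morphism attached to the complete linear system $|\mathcal O(1)\otimes\pi^*K_C|$.

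For (a) --- the computational heart, and the main obstacle --- I would unwind the two successive Hecke modifications $W\mapsto W^\theta\mapsto V^\ell$ together with the nested subspaces $\ell_0\subset U\subset W_x^\theta$ and their annihilators $U^\perp\subset\ell_0^\perp\subset V_x$, and verify by a direct linear--algebra computation that $\widehat{\beta_x}(\xi)$, transported to an endomorphism of $W$, is (up to scalar) exactly $\theta\otimes\omega(\theta,\,\cdot\,)$. The computation is guided by the fact that the family $\{V^\ell\}$ moves inside $\MS$, so its Kodaira--Spencer class necessarily lies in $H^1(C,\mathrm{Sym}^2W^*\otimes L)\hookrightarrow H^1(C,\Endo W)$, while among the rank--one elements of the fibre $(\mathrm{Sym}^2W^*\otimes L)_x$ the Hecke direction $\theta$ singles out the right one. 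Regular stability of every $V^\ell$ (from \cite[Lemma 4.1]{CCL22} under the stated genus bound, exactly as in Proposition \ref{symplecticsmooth}) guarantees $\delta(\phi_\theta)\neq0$, so that $\iota$ composed with this coboundary is honestly $\tau_W$. This step amounts to redoing, in arbitrary even rank, the rank--two bookkeeping of \cite{H00}, now with the extra task of pushing $\omega$ through both Hecke modifications, through the passage to duals, and through the nested annihilators; this is where the real work lies.

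For (b), set $\mathcal A:=\mathrm{Sym}^2W^*\otimes L$ with bundle projection $\pi\colon\mathbb P\mathcal A\to C$. Assembling the coboundaries of $0\to\mathcal A\to\mathcal A(y)\to\mathcal A(y)|_y\to0$ over $y\in C$ gives a sheaf map $\pi^*\mathcal A\to H^1(\mathcal A)\otimes\pi^*K_C$; restricting along $\mathcal O_{\mathbb P\mathcal A}(-1)\hookrightarrow\pi^*\mathcal A$ produces $\Phi_W\colon\mathbb P\mathcal A\to\mathbb P H^1(\mathcal A)$ with $\Phi_W^*\mathcal O(1)=\mathcal O_{\mathbb P\mathcal A}(1)\otimes\pi^*K_C$, whose defining subsystem of $|\mathcal O(1)\otimes\pi^*K_C|$ is the image of $H^1(\mathcal A)^*$ in $H^0(\mathbb P\mathcal A,\mathcal O(1)\otimes\pi^*K_C)$. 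By Serre duality $H^1(\mathcal A)^*=H^0(C,\mathrm{Sym}^2W\otimes L^*\otimes K_C)$, while by the projection formula $H^0(\mathbb P\mathcal A,\mathcal O(1)\otimes\pi^*K_C)=H^0(C,\mathcal A^*\otimes K_C)=H^0(C,\mathrm{Sym}^2W\otimes L^*\otimes K_C)$; the standard compatibility of Serre duality with the coboundary/evaluation pairing identifies this map with the identity, so the defining system is all of $|\mathcal O(1)\otimes\pi^*K_C|$. (On all of $\mathbb P\mathcal A$, $\Phi_W$ is a morphism as soon as $\mathrm{Sym}^2W\otimes L^*\otimes K_C$ is globally generated, which holds for generic $W$ under the genus hypothesis; along the locus $\iota(\mathbb P W^*)$ actually used it follows already from Proposition \ref{symplecticsmooth}.) Reading off the Serre--dual description of $\delta(\phi_\theta)$ then gives the last assertion, that $\iota(\theta)$ corresponds to the functional $s\mapsto\mathrm{tr}(s_x\circ\phi_\theta)\in K_{C,x}$ on $H^0(C,\mathrm{Sym}^2W\otimes L^*\otimes K_C)$; combining (a) and (b) yields $\tau_W=\Phi_W\circ\iota$ and the commutativity of the displayed diagram.
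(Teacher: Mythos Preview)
Your strategy is precisely the paper's: apply Proposition \ref{KScomp} to obtain the Kodaira--Spencer class $\delta(\widehat{\beta_x}(\xi))$, identify the resulting rank-one endomorphism supported at $x$ as $\theta\otimes\omega(\theta,\cdot)$, and then read off the linear-system description via Serre duality and the residue pairing.

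The one substantive difference is in your step (a). You correctly flag the identification of $\phi_\theta$ as the computational heart, but you leave it as something you ``would verify by a direct linear-algebra computation,'' supported only by the heuristic that the class must be symmetric and rank one and that ``the Hecke direction $\theta$ singles out the right one.'' That intuition is accurate, but does not by itself pin down the image line of $\phi_\theta$. The paper actually carries this out: it fixes a coordinate neighborhood $\mathcal U_0\ni x$ with coordinate $z$, and local frames $\{e_i\}$ of $V^{\ell_0}|_{\mathcal U_0}=W^*|_{\mathcal U_0}$ and $\{f_i\}$ of $V|_{\mathcal U_0}$ with $e_{1,x}$ spanning $\theta$, $f_{2,x}$ spanning $\ell_0^\perp/U^\perp$, and $\beta\colon(e_1,e_2,\ldots,e_n)\mapsto(zf_1,f_2,\ldots,f_n)$. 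With this setup (following the pattern of \cite{H00}) the \v Cech cocycle representing $\delta(\widehat{\beta_x}(v))$ is $\{(e_2^*\otimes e_1)/z\ \text{on}\ \mathcal U_0\cap\mathcal U_j\}$, from which the identification with $\iota(\theta)=\theta\otimes\omega(\theta,\cdot)$ and with the trace functional via the residue pairing follows directly. This explicit frame-and-cocycle step is exactly the ``unwinding'' you promise but do not perform.

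Your step (b), by contrast, is spelled out in more detail than the paper's treatment (the paper simply asserts the Serre-dual description). One minor caution: your parenthetical that $\Phi_W$ is a morphism on all of $\PP(\mathrm{Sym}^2W^*\otimes L)$ once $\mathrm{Sym}^2W\otimes L^*\otimes K_C$ is globally generated, ``which holds for generic $W$ under the genus hypothesis,'' is not established under $g\ge4$---the paper explicitly defers this to {\S}5 with a stronger bound. But as you correctly note, for the present proposition only the restriction to $\iota(\PP W^*)$ is needed, and there Proposition \ref{symplecticsmooth} already suffices.
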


\begin{remark}
The upper arrow $\Psi_W$ is the natural mapping of the ruled variety $\PP (\Endo (W))$, which is not necessarily defined everywhere at this stage, but it will turn out in \S 5 to be a morphism and furthermore an embedding under certain genus assumption. On the other hand, the lower arrow $\Phi_W$ is a morphism by Proposition  \ref{symplecticsmooth}.
\end{remark}

\begin{proof} 
%

Recall that $\ker \omega^{\theta}$  is  a subspace of $V_x$ of codimension two, and after we put 
$U^{\perp} = \ker \omega^{\theta}_x  \subset V_x  $,  the  family $\{V^{\ell} : [\ell] \in   \mathbb P(V_x/U^{\perp}) \}$  is a deformation of $W^* =V^{\ell_0}$. 
Applying Proposition \ref{KScomp} to  the kernel  of $\omega^{\theta}$, we get that
the Kodaira-Spencer map
$$T_{\ell_0}(\mathbb P(V_x/U^{\perp})) = \Hom(\ell_0^{\perp}/U^{\perp}, V_x/\ell_0^{\perp}) \rightarrow H^1(C, \End(V^{\ell_0}))$$
for the  family $\{V^{\ell} : [\ell] \in   \mathbb P(V_x/U^{\perp}) \}$,   is given by the composition
\begin{eqnarray*}
\Hom(\ell_0^{\perp}/U^{\perp}, V_x/\ell_0^{\perp}) \stackrel{\widehat{\beta_x}}{\rightarrow} \Hom(V_x^{\ell_0}, V_x/\ell_0^{\perp}) \stackrel{\delta}{\rightarrow}  H^1(C, \mathrm{End}(V^{\ell_0})),
\end{eqnarray*}
where  $\widehat{\beta_x} $ and $\delta$ are induced from
$$0 \rightarrow V^{\ell_0} \stackrel{\beta}{\rightarrow} V \rightarrow (V_x/\ell_0^{\perp}) \otimes \mathcal O_x \rightarrow 0.$$
Furthermore,  as in the case of the moduli space of vector bundles discussed in \cite{H00}, 
 the element $\delta (\widehat{\beta}_x(v))$ in $H^1(C, \mathrm{End}(V^{\ell_0}))$ for $v \in \Hom(\ell_0^{\perp}/U^{\perp} , V_x/\ell_0^{\perp})$, 
 is represented by the cocycle $\left\{\dfrac{e_2^* \otimes e_1 }{z} \text{ on } \mathcal U_0 \cap \mathcal U_j \right \}$. 
%
Here,
 $\{\mathcal U_0, \mathcal U_1, \dots, \mathcal U_N\}$ is a coordinate covering of $C$ such that
 \begin{itemize}
\item all the involved vector bundles are trivial on $\mathcal U_0$ and $\mathcal U_j$  for  $1 \leq j \leq N$, 
\item $x \in \mathcal U_0$ and  $x \not \in \mathcal U_j$ for  $1 \leq j \leq N$ 
so that on each $\mathcal U_j$,  we identify $ V^{\ell_0}$ and $V$ via $V^{\ell_0} \stackrel{\beta}{\rightarrow} V$,
\item  $z$ is a coordinate on $\mathcal U_0$ centered at $x$,
\item   $\{e_1, e_2, \dots, e_n\}$  and $\{f_1, f_2, \dots, f_n\}$ are  frames  of $V^{\ell_0}|_{\mathcal U_0}$ and  $V |_{\mathcal U_0}$, respectively  such that
 $e_{1,x} \in \theta \subset   V_x^{\ell_0} =W^*_x $, $f_{2,x} \in \ell_0^{\perp}/U^{\perp} \subset V_x /U^{\perp} =(W_x^{\theta})^*/U^{\perp}$ and
\item $\beta$ sends $ e_1, e_2, \dots, e_n$  to $ zf_1, f_2, \dots, f_n$. 
\end{itemize}

Thus $\delta (\widehat{\beta}_x(v))$ corresponds to the image of  $\iota(\theta)= \theta \otimes \omega(\theta, \,\cdot\,)$ via the duality $H^1(C, \Endo (W)) \simeq H^0(C, K_C  \otimes \Endo (W))^*$ induced by the residue pairing.
\end{proof}

\subsection{Orthogonal  bundles}
 By a similar argument as \cite[Lemma 2.2]{BH21}, the tangent space of $\MO$ at a regularly stable orthogonal bundle $W$ is given by $H^1(C, L \otimes \wedge^2W^*)$. In this context, we put $\ad (W) = L \otimes \wedge^2W^*$.
 
\begin{proposition} \label{prop: tangent map orthogonal}
Assume $g \ge 5$ and $n \ge 5$ as in Proposition \ref{orthosmooth}.
Let $\mathcal K$ be  the minimal rational component consisting of orthogonal  Hecke curves on $\MO$. 
%
Then for a generic $W \in \MO$, the tangent map $\tau_{W}$ 
$$\tau_{W} \colon \mathcal K_{W}= IG(2, W^*)  \to \mathbb P T_{W} \MO = \mathbb P H^1(C, \wedge^2 W^* \otimes L) $$
is  the composition $\Phi_W \circ \iota$,
where $\Phi_W$ is given by the complete linear system $|\mathcal O(1) \otimes \pi^* K_C|$. Also, $\iota(v_1 \wedge v_2) =b(v_1, \,\cdot\,) \otimes v_2 - b(v_2, \,\cdot\,) \otimes v_1$  for $[v_1 \wedge v_2] \in IG(2, W^*_x ) $
and  the image of $\iota(v_1\wedge v_2) $ in $\mathbb P(H^1(C, \wedge ^2W^* \otimes L))$ is given by the linear functional $H^0(C, \wedge ^2 W^*\otimes L^* \otimes K_C) \rightarrow K_{C,x}$ taking the trace of endomorphisms of $W_x$.
\begin{eqnarray*}
\xymatrix@R+1pc@C+2pc{
\mathcal K_{W}=IG(2, W^* ) \ar[dr]_{\iota}  \ar[r]  & \mathbb P(\Endo (W))  \ar@{-->}[r]_{\Psi_W}   &   \mathbb P H^1(C, \Endo W)  \\
 & \mathbb P(\wedge^2 W^* \otimes L ) \ar@{^{(}->}[u] \ar[r]<4pt>_{\Phi_W} & \mathbb P H^1 (C, \wedge ^2 W^* \otimes L) \ar@{^{(}->}[u] \\
}
\end{eqnarray*}
\end{proposition}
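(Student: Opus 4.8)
The plan is to run the same computation as in the symplectic case (Proposition \ref{prop: tangent map symplectic}), now with the isotropic Grassmannian $\IG(2,W_x^*)$ in place of $\PP W_x^*$ and with the symmetric form $b$ replaced by the skew pairing that appears on the dual side. Concretely: fix a generic $W\in\MO$ and a point $[\wp_0]\in\mathcal K_W=\IG(2,W^*)$ corresponding to an isotropic $2$-plane $\wp_0\subset W^{\Theta}_x$ arising as the kernel of $W_x^{\Theta}\to W_x$. By the construction in \S 2.6 the relevant orthogonal Hecke curve is the family $\{V^{\wp}\,:\,[\wp]\in\IG(2,V_x/\ker b_x^{\Theta})\}$, a deformation of $V^{\wp_0}\cong W^*$ along $U:=(\ker b_x^{\Theta})^{\perp}\cong\mathbb C^4$. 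First I would invoke Proposition \ref{prop: KS map codimension 4} to identify the Kodaira--Spencer map of this family with the composition $\widehat{\beta_x}$ followed by $\delta$, where both are induced from $0\to V^{\wp_0}\xrightarrow{\beta}V\to(V_x/\wp_0^{\perp})\otimes\mathcal O_x\to 0$; this is exactly the orthogonal analogue of the displayed composition in the symplectic proof. Proposition \ref{orthosmooth} guarantees this map is injective, so the tangent map $\tau_W$ is a morphism and it suffices to identify it.

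The second step is the cocycle computation. Choosing a coordinate cover $\{\mathcal U_0,\dots,\mathcal U_N\}$ with $x\in\mathcal U_0$ only, a coordinate $z$ at $x$, and compatible frames $\{e_i\}$ of $V^{\wp_0}|_{\mathcal U_0}$ and $\{f_i\}$ of $V|_{\mathcal U_0}$ so that $\beta$ sends $e_1,e_2,e_3,\dots,e_n$ to $zf_1,zf_2,f_3,\dots,f_n$ (two factors of $z$ now, since the Hecke modification has codimension two), one reads off that for a tangent vector $v\in\Hom(\wp_0^{\perp}/U^{\perp},V_x/\wp_0^{\perp})$ the class $\delta(\widehat{\beta_x}(v))\in H^1(C,\End(V^{\wp_0}))$ is represented by a cocycle of the form $\{z^{-1}(e_a^*\otimes e_b + \cdots)\text{ on }\mathcal U_0\cap\mathcal U_j\}$. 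The key point is that, tracing through the identification $W^*=V^{\wp_0}$ and the isomorphism $W_x^{\Theta}/\wp_0\cong \wp_0^{\perp}/U^{\perp}$, one matches this class with the image of $\iota(v_1\wedge v_2)=b(v_1,\,\cdot\,)\otimes v_2-b(v_2,\,\cdot\,)\otimes v_1$ under the residue-pairing duality $H^1(C,\Endo W)\cong H^0(C,K_C\otimes\Endo W)^*$; the antisymmetrization in $\iota$ is forced because $[\wp_0]$ is a point of an isotropic Grassmannian (a decomposable $2$-vector) rather than an arbitrary point of a projective space. Pairing against a section $s\in H^0(C,\wedge^2 W^*\otimes L^*\otimes K_C)$ (the dual of $\ad(W)=L\otimes\wedge^2W^*$, using $W\cong W^*$ via $b$), the residue of $s$ against the cocycle is the trace at $x$ of the corresponding endomorphism of $W_x$, which is the stated linear functional.

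Finally, the factorization $\tau_W=\Phi_W\circ\iota$ through $\PP(\wedge^2W^*\otimes L)$ and then into $\PP(\Endo W)$: the map $\iota$ is the tautological closed embedding of $\IG(2,W^*)$ sending $[v_1\wedge v_2]$ to the rank-$2$ skew endomorphism $b(v_1,\cdot)\otimes v_2-b(v_2,\cdot)\otimes v_1$, which lands in the subbundle $\wedge^2W^*\otimes L\subset\Endo W$ exactly as in the commuting diagram, and the cocycle computation shows the composition with the natural rational map $\Psi_W$ to $\PP H^1(C,\Endo W)$ agrees with $\tau_W$. Since $\delta\circ\widehat{\beta_x}$ is linear in $v$ and $H^1$-valued, the map $\Phi_W$ on the $\PP(\wedge^2W^*\otimes L)$-factor is given by the complete linear system $|\mathcal O(1)\otimes\pi^*K_C|$ as in the $\rSU_C(2,d)$ case treated in \cite{H00}; the $\pi^*K_C$ twist records the residue/coordinate-change factor $z^{-1}$ at the varying point $x\in C$.

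I expect the main obstacle to be bookkeeping in the cocycle computation: keeping straight the two separate dualizations ($W\leftrightarrow W^*$ via $b$, and Serre duality for $H^1$), verifying that the rank-$2$ endomorphism one obtains is genuinely the antisymmetrized tensor $\iota(v_1\wedge v_2)$ and not something differing by a scalar or by the symmetric correction term, and checking that the isotropy condition $\ker b_x^{\Theta}\subset\wp^{\perp}$ is what cuts out the correct ruling $\IG(2,\cdot)$ so that $\iota$ has the claimed image. Once the symplectic computation of Proposition \ref{prop: tangent map symplectic} is in hand, though, this is a parallel argument with $\Sym^2$ replaced by $\wedge^2$ and dimension-one subspaces replaced by isotropic dimension-two subspaces throughout.
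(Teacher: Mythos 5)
Your proposal is correct and follows essentially the same route as the paper: invoke Proposition \ref{prop: KS map codimension 4} for the Kodaira--Spencer map, compute the \v{C}ech cocycle $\{(e_3^*\otimes e_1-e_4^*\otimes e_2)/z\}$ with frames adapted to $\beta(e_1,e_2,e_3,e_4)=(zf_1,zf_2,f_3,f_4)$, and identify the result with $\iota(v_1\wedge v_2)$ via the residue pairing. The only detail left implicit in your write-up, which the paper makes explicit, is that the tangent direction to the isotropic Grassmannian at $\wp_0$ is generated by the antisymmetric combination $f_{3,x}^*\otimes f_{1,x}-f_{4,x}^*\otimes f_{2,x}$, which is exactly what produces the antisymmetrized cocycle.
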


\begin{remark}
As before, the upper arrow $\Psi_W$ is  not necessarily defined everywhere at this stage, but it will turn out in \S 5 to be a morphism and furthermore an embedding under certain genus assumption. On the other hand, the lower arrow $\Phi_W$ is a morphism by Proposition  \ref{orthosmooth}.
\end{remark}

\begin{proof} 
%

Recall that the kernel  of $b^{\Theta}$ is  a subspace of codimension 4, and if we put
$U^{\perp} = \Ker b^{\Theta}_x \subset V_x = (W_x^{\Theta})^*$, then the   family $\{V^{\wp} : [\wp] \in   IG(2,  V_x/U^{\perp}) \}$  is a deformation of $W^* =V^{\wp_0}$. 
As in the proof of Proposition \ref{orthosmooth}, we apply  Proposition \ref{prop: KS map codimension 4}
to  the kernel  of $b^{\Theta}$ and  we get that
  the Kodaira-Spencer map
$$T_{\wp_0}(Gr(2,V_x/U^{\perp})) = \Hom(\wp_0^{\perp}/U^{\perp}, V_x/\wp_0^{\perp}) \rightarrow H^1(C, \End(V^{\wp_0}))$$
for the   family $\{V^{\wp} : [\wp] \in   Gr(2,  V_x/U^{\perp}) \}$ is given by the composition
\begin{eqnarray*}
\Hom(\wp_0^{\perp}/U^{\perp}, V_x/\wp_0^{\perp}) \stackrel{\widehat{\beta_x}}{\rightarrow} \Hom(V_x^{\wp_0}, V_x/\wp_0^{\perp}) \stackrel{\delta}{\rightarrow}  H^1(C, \mathrm{End}(V^{\wp_0})),
\end{eqnarray*}
where  $\widehat{\beta_x} $ and $\delta$ are induced from
$$0 \rightarrow V^{\wp_0} \stackrel{\beta}{\rightarrow} V \rightarrow (V_x/\wp_0^{\perp}) \otimes \mathcal O_x \rightarrow 0.$$

Choose a coordinate covering $\{\mathcal U_0, \mathcal U_1, \dots, \mathcal U_N\}$ of $C$ such that
\begin{itemize}
\item all the involved vector bundles are trivial on $\mathcal U_0$ and  $\mathcal U_j$ for  $1 \leq j \leq N$,
\item  $x \in \mathcal U_0$ and $x \not \in \mathcal U_j$ for  $1 \leq j \leq N$ so that on each  $\mathcal U_j$, we  identify $ V^{\wp_0}=W^*$ and $V=(W^{\Theta})^*$ by $V^{\wp_0} \stackrel{\beta}{\rightarrow} V$,
\item $z$ is a coordinate on $\mathcal U_0$ centered at $x$, 
\item $\{e_1, e_2, \dots, e_n\}$ and $\{f_1, f_2, \dots, f_n\}$  are frames of $V^{\wp_0}|_{\mathcal U_0}$ and $V |_{\mathcal U_0}$ respectively such that   $e_{1,x}, e_{2,x} \in \Theta \subset  V_x^{\wp_0}=W^*_x $, $f_{3,x}, f_{4,x} \in \wp_0^{\perp}/U^{\perp} \subset V_x /U^{\perp} =(W_x^{\Theta})^*/U^{\perp}$ and
\item $\beta$ sends $ e_1, e_2, e_3, e_4 $ to $zf_1, zf_2, f_3, f_4 $.
\end{itemize}
Then
the tangent space $T_{\wp_0}(IG(2,V_x/U^{\perp}))$ is generated by   
\[
v= f_{3,x}^* \otimes f_{1,x} - f_{4,x}^* \otimes f_{2,x} \in \Hom(\wp_0^{\perp}/U^{\perp} , V_x/\wp_0^{\perp}).
\]
 Furthermore,  $v\circ \beta$ maps
$e_{1,x}$, $e_{2,x}$, $e_{3,x}$, $e_{4,x}$ to $0$, $0$, $f_{1,x}$, $f_{2,x}$ up to a constant multiple. Thus $\widehat{\beta_x}(v)$ can be extended to $\tilde{v}$ with $\tilde v(e_1) =0$ and $\tilde{v}(e_2)=0$ and $\tilde v(e_3) =f_1$ and $\tilde{v}(e_4)=f_2$.  Then $\delta (\widehat{\beta_x}(v))$ is defined by the cocycle
$$\widehat{v}_j=(\beta|_{\mathcal U_0 \cap \mathcal U_j})^{-1} \circ \widetilde v \ \in H^0(\mathcal U_0  \cap \mathcal U_j, \End V^{\ell_0}) ,$$
where
 $\beta|_{\mathcal U_0 \cap \mathcal U_j}   \colon V^{\ell_0}|_{\mathcal U_0 \cap \mathcal U_j} \rightarrow V|_{\mathcal U_0 \cap \mathcal U_j}
 $
 is the isomorphism. Therefore,  $\delta (\widehat{\beta}_x(v))$ 
 is represented by the cocycle $\left\{\dfrac{e_3^* \otimes e_1 -e_4^* \otimes e_2}{z} \text{ on } \mathcal U_0 \cap \mathcal U_j \right \}$, where $\{e_1^*,e_2^*, \dots, e_n^*\}$ is the dual frame. 
 
Thus $\delta (\widehat{\beta}_x(v))$ corresponds to the image of  $\iota(v_1 \wedge v_2) =b(v_1, \,\cdot\,) \otimes v_2 - b(v_2, \,\cdot\,) \otimes v_1$  via the duality $H^1(C, \Endo (W)) \simeq H^0(C, K_C  \otimes \Endo (W))^*$ induced by the residue pairing.

\end{proof}
 
 \subsection{Simplicity of the tangent bundles}
 Now we can prove the simplicity of the tangent bundles. Let us denote by $\M^\circ$  the smooth locus of the moduli space $\M S_C(2r,L)$ ($\MO$, respectively)  of $L$-valued symplectic (orthogonal, respectively) bundles on $C$ of genus $g$.

 We exclude the case of symplectic bundles of rank two, which are nothing but vector bundles of rank two. In this case, the simplicity of the tangent bundle of $\mathcal S U_C(2,d)$ has been proven in \cite{H01}.  Moreover when $d$ is odd,   the stability of the tangent bundle   has been proven in \cite{H00}. Also recall that the rank of orthogonal bundles are assumed  to be at least 5  to guarantee the minimality of the orthogonal Hecke curves (see \S 2.5). 
 
\begin{theorem}
Assume that $g \ge 4 , n \ge 4$ for symplectic case, and $g \ge 5, n \ge 5$ for orthogonal case.  
Then the tangent bundle $T(\M^\circ)$  is simple. 
\end{theorem}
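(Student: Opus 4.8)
The plan is to deduce the statement from Proposition~\ref{nondegsimple}. Fix a generic point $W$ of $\M$ (in the orthogonal case, of a fixed irreducible component of $\MO$). Since $\M$ is a Fano variety of Picard number one carrying the minimal rational component $\mathcal K$ of symplectic, resp.\ orthogonal, Hecke curves recalled in \S2.5, and since by Propositions~\ref{symplecticsmooth} and~\ref{orthosmooth} the Hecke curves through $W$ are smooth while by Propositions~\ref{prop: contained in smooth locus symplectic} and~\ref{prop: contained in smooth locus orthogonal} they lie in $\M^\circ$, the tangent map $\tau_W$ is a well-defined morphism on $\mathcal K_W$ and the VMRT $\mathcal C_W$ is the closure of its image. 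By Proposition~\ref{nondegsimple} it therefore suffices to prove that $\mathcal C_W$ is non-degenerate, i.e.\ not contained in any hyperplane of $\mathbb P T_W\M$.

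First I would invoke Proposition~\ref{prop: tangent map symplectic} (resp.\ Proposition~\ref{prop: tangent map orthogonal}) to factor $\tau_W=\Phi_W\circ\iota$, where $\iota\colon\mathcal K_W\to\mathbb P(\ad W)$ is a morphism over $C$ sending the Hecke curve attached to $\theta\subset W_x^*$ (resp.\ $\wp\subset W_x^*$) to $[\,\theta\otimes\omega(\theta,\,\cdot\,)\,]$ (resp.\ $[\,b(v_1,\,\cdot\,)\otimes v_2-b(v_2,\,\cdot\,)\otimes v_1\,]$) in the fiber $\mathbb P((\ad W)_x)$, and $\Phi_W$ is the morphism defined by the complete linear system $|\mathcal O(1)\otimes\pi^*K_C|$ on $\pi\colon\mathbb P(\ad W)\to C$. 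Since $\Phi_W$ comes from a \emph{complete} linear system, the span of $\Phi_W(\iota(\mathcal K_W))$ is all of $\mathbb P T_W\M=\mathbb P H^1(C,\ad W)$ if and only if no nonzero global section of $\mathcal O(1)\otimes\pi^*K_C$ vanishes identically on $\iota(\mathcal K_W)$. Using $\pi_*\mathcal O(1)=(\ad W)^*$ and Serre duality $H^1(C,\ad W)^*\cong H^0(C,(\ad W)^*\otimes K_C)$, such a section is a nonzero $\phi\in H^0(C,(\ad W)^*\otimes K_C)$ whose value $\phi_x\in(\ad W)^*_x\otimes K_{C,x}$ annihilates the affine cone over $\iota(\mathcal K_W)_x$ for every $x\in C$; this pairing is exactly the ``trace at $x$'' described in the last lines of Propositions~\ref{prop: tangent map symplectic}--\ref{prop: tangent map orthogonal}.

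The crux is then the fiberwise non-degeneracy: for each $x\in C$, $\iota(\mathcal K_W)_x$ spans $\mathbb P((\ad W)_x)$. Granting this, a section $\phi$ vanishing on $\iota(\mathcal K_W)$ satisfies $\phi_x=0$ for all $x$, hence $\phi\equiv0$, a contradiction, so $\mathcal C_W$ is non-degenerate and we are done. To prove the fiberwise statement I would use that $\iota$ is built only from the form $\omega$ (resp.\ $b$), so that $\iota(\mathcal K_W)_x\subset\mathbb P((\ad W)_x)$ is a nonempty subvariety invariant under $\mathrm{Sp}(W_x,\omega_x)$ (resp.\ $\mathrm O(W_x,b_x)$); under the identification $(\ad W)_x=L_x\otimes\mathrm{Sym}^2 W_x^*\cong\mathfrak{sp}(W_x)$ (resp.\ $(\ad W)_x=L_x\otimes\wedge^2 W_x^*\cong\mathfrak{so}(W_x)$) this is the adjoint representation, which is irreducible because the Lie algebra is simple---and here the orthogonal hypothesis $n\ge5$ is exactly what rules out the exceptional case $\mathfrak{so}(4)$. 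Thus the linear span of $\iota(\mathcal K_W)_x$ is a nonzero submodule and hence everything. (In the symplectic case one may instead quote the classical fact that $\mathrm{Sym}^2$ of a vector space is spanned by squares of linear forms.)

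I expect the main obstacle to be the bookkeeping in the second paragraph: fixing the projectivization convention so that $\pi_*(\mathcal O(1)\otimes\pi^*K_C)$ is identified with $(\ad W)^*\otimes K_C$, and verifying that the pairing of a section $\phi$ with the point $\Phi_W(\iota(\theta))$ is precisely the fiberwise contraction $\langle\phi_x,\iota(\theta)\rangle$ furnished, via the residue pairing, by Propositions~\ref{prop: tangent map symplectic}--\ref{prop: tangent map orthogonal}. Once this dictionary is in place, the reduction to fiberwise non-degeneracy and the short representation-theoretic spanning argument handle the symplectic and the orthogonal cases simultaneously.
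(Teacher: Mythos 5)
Your proof is correct and follows essentially the same route as the paper: both deduce the theorem from Proposition \ref{nondegsimple} together with the description of the tangent map as $\Phi_W \circ \iota$ given in Propositions \ref{prop: tangent map symplectic} and \ref{prop: tangent map orthogonal}. The only difference is that you make explicit the fiberwise spanning step (squares span $\Sym^2 W_x^*$, resp.\ irreducibility of the adjoint representation of $\mathfrak{so}(n)$ for $n \ge 5$) needed to pass from the completeness of $|\mathcal O(1)\otimes \pi^*K_C|$ on $\mathbb P(\ad W)$ to the non-degeneracy of the image of the subvariety $\iota(\mathcal K_W)$, a point the paper's proof leaves implicit.
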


\begin{proof}
By Proposition  \ref{prop: tangent map symplectic} and Proposition \ref{prop: tangent map orthogonal}, the tangent maps of symplectic and orthogonal Hecke curves are given by the corresponding complete linear systems. Hence for each case, the VMRT at a generic point is  non-degenerate, and the wanted result follows from  Proposition \ref{nondegsimple}. 
\end{proof}

\section{Biregularity of the tangent map}

Let $W$ be an $L$-valued symplectic or orthogonal bundle over $C$. Throughout this section, we will assume that an orthogonal bundle of even rank, say $2r$, admits an isotropic subbundle of rank $r$. As shown in \cite[Lemma 2.5]{CCH21}, this is equivalent to that $\det (W) = L^r$. Accordingly, $\mathcal M O_C(2r, L)$ denotes one of the moduli components which parameterizes  $L$-valued orthogonal bundles of rank $2r$ with determinant $L^r$. On the other hand, every orthogonal bundle of odd rank, say $2r+1$, admits an isotropic subbundle of rank $r$ by \cite[Lemma 2.7]{CCH21}.  Hence $\mathcal M O_C(2r+1, L)$ denotes any  moduli component which parameterizes   $L$-valued orthogonal bundles of rank $2r+1$.

As discussed in the previous section, we write
\[ \ad W \ = \ \begin{cases} \Sym^2 W  \otimes L^* \hbox{ if $W$ is symplectic;} \\ \wedge^2 W \otimes L^* \hbox{ if $W$ is orthogonal.} \end{cases} \]
In either case, $\ad W$ is a self-dual subbundle of  $\Endo W$.
 
Let $\pi \colon \PP (\ad W) \to C$ be the associated projective bundle of $\ad W$.  We consider the natural map
\[ \Phi_W \colon \PP( \ad W ) \ {\dashrightarrow} \ \PP H^0 ( C, \Kc \otimes \ad W )^* , \]
given by the complete linear system 
$| \cO_{\PP ( \ad W)} (1) \otimes \pi^* \Kc | $.
Our goal will be to prove that $\Phi_W$ is an embedding.   In fact, we show a stronger statement: the map
\[ \Psi_W \colon \PP( \Endo W ) \ \dashrightarrow \ \PP H^0 ( C, \Kc \otimes \Endo W )^* \]
 is an embedding. More precisely:

\begin{theorem}  \label{embedding} 
Let $\ell := \deg (L) \in \{0,1 \}$. The map $\Psi_W$ is an embedding for:\\
(1) a generic $W \in \mathcal M S_C(2r, L)$ if $r \ge 2$ and $ g \ge 5 +2\ell$,  \\
(2) a generic $W \in \mathcal M O_C(2r, L)$  if either ($r =3, \: g \ge 8$) or ($r \ge 4, \: g \ge 7$), \\
(3) a generic $W \in \mathcal M O_C(2r+1, L)$ if either  ($r=2, \ g \ge 14$) or ($r \ge 3, \ g \ge 9$).  
\end{theorem}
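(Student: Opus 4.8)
The plan is to verify that $\Psi_W$ separates points and tangent vectors on $\PP(\Endo W)$ by translating each condition into a cohomological statement about $\Endo W$ and its twists, and then using genericity of $W$ together with dimension/stability estimates. Concretely, a point of $\PP(\Endo W)$ is a pair $(x, [\phi])$ with $x \in C$ and $[\phi] \in \PP(\Endo(W)_x)$, and $\Psi_W$ is given by evaluating sections of $\Kc \otimes \Endo W$ at $x$ and pairing with $\phi$. The map fails to be a morphism at $(x,[\phi])$ exactly when every section of $H^0(C, \Kc \otimes \Endo W)$ pairs to zero with $\phi$ at $x$; it fails to separate two points $(x,[\phi])$ and $(y,[\psi])$ when no section distinguishes them; and it fails to be an immersion when sections cannot separate a tangent direction. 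In each case one rewrites the obstruction as the non-surjectivity of a restriction map
\[
H^0(C, \Kc \otimes \Endo W) \ \longrightarrow \ H^0(D, (\Kc \otimes \Endo W)|_D)
\]
for an effective divisor $D$ of degree $\le 2$ (namely $D = x+y$, or $D=2x$ for the infinitesimal statements, or the analogous rank-one-at-$x$ condition for point separation within a fibre), and by Serre duality this is governed by $H^1(C, \Endo W \otimes \cO_C(-D))$, i.e.\ by $H^0(C, \Endo W(D))^* $ being no larger than forced. So the whole theorem reduces to controlling $h^0(C, \Endo W \otimes \mathcal{L})$ for line bundles $\mathcal L$ of small degree (degree $0$, $1$, or $2$), for $W$ a generic symplectic/orthogonal bundle.

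Next I would set up the key vanishing input. Write $\Endo W = \cO_C \oplus \Endo_0 W$, and further decompose $\Endo_0 W$ using the $\ad$-summand: for symplectic $W$, $\Endo W = (\Sym^2 W \otimes L^*) \oplus (\wedge^2 W \otimes L^*)$, and dually for orthogonal $W$, with $\ad W$ self-dual. The claim one wants is that for generic $W$ and for $\mathcal L$ of degree $\le 2$, $h^0(C, \ad W \otimes \mathcal L)$ and $h^0(C, (\Endo W/\ad W)\otimes \mathcal L)$ are as small as stability allows — ideally $0$ unless $\mathcal L$ is forced by self-duality to contribute (e.g.\ $\mathcal L = \cO_C$ contributes the $1$-dimensional space $H^0(\ad W \otimes \cO)$ only when $\ad W$ happens to have a section, which regular stability rules out beyond the scalars already split off). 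This is exactly the type of estimate proved in \cite[Lemmas 4.1, 4.2]{CCL22} via $\delta$-stability: a generic symplectic bundle is $2$-stable and a generic orthogonal bundle is $3$-stable for the genus ranges in the statement, and $k$-stability bounds $h^0$ of twists of $\Endo W$ by line bundles of degree up to roughly $k$. The genus bounds $g \ge 5+2\ell$, $g\ge 7$ or $8$, $g \ge 9$ or $14$ are precisely what is needed to make these $\delta$-stability estimates kick in for the relevant rank (the orthogonal odd case and small $r$ being the most demanding, which is why the thresholds jump there), and to keep $\deg(\Kc \otimes \ad W) = (2r\mp\ldots)(g-1) + \ldots$ large enough that the linear system is base-point free and the restriction maps above have the right rank.

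Then the proof proper runs through the three separation conditions in order: (i) $\Psi_W$ is a morphism — show the restriction map to $(\Kc\otimes\Endo W)_x$ is surjective for every $x$, equivalently $H^1(C,\Endo W(-x)) \to H^1(C,\Endo W)$ is the expected map, using the $h^0$-bound with $\mathcal L = \cO_C(-x)$ and $\cO_C$; (ii) injectivity on points — for $x \ne y$ use $\mathcal L = \cO_C(-x-y)$, and for two points in the same fibre $\PP(\Endo(W)_x)$ reduce to showing the sections of $\Kc\otimes\Endo W$ evaluated at $x$ span $\Endo(W)_x$ modulo at most the unavoidable scalar, again an $h^0(\Endo W(-x))$ computation; (iii) immersion — for directions along a fibre argue as in (ii) at first order, and for directions transverse to the fibre (moving $x$) use $\mathcal L = \cO_C(-2x)$, i.e.\ $h^0(C, \Endo W(-2x))$, to see that $2$-jets of sections at $x$ surject onto what is needed. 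Finally, one restricts $\Psi_W$ to the subbundle $\PP(\ad W) \hookrightarrow \PP(\Endo W)$ to deduce that $\Phi_W$ is also an embedding, since it is the composition of a closed immersion with an embedding; and one checks the low-rank exclusions ($r=1$ symplectic $=\rSU_C(2,d)$, $r\le 2$ orthogonal where the moduli space has Picard number $\ne 1$) are consistent with the hypotheses $r\ge 2$ resp.\ $n \ge 5$ already in force. The main obstacle I expect is step (iii) together with the uniformity over all $x$: the $\delta$-stability estimates of \cite{CCL22} are stated for a generic bundle and generic twisting line bundle, whereas here one needs $h^0(C, \Endo W \otimes \cO_C(-2x))$ to be controlled for \emph{every} point $x \in C$ simultaneously — handling this likely requires either a semicontinuity/openness argument in $x$ combined with the generic estimate, or a direct refinement of the $k$-stability bound that is insensitive to which points are used, and getting the genus thresholds tight (especially the $g \ge 14$ in the orthogonal odd rank-$2$ case) is where the bookkeeping becomes delicate.
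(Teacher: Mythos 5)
Your reduction of the embedding property to a cohomological criterion is essentially the paper's: both amount to showing $h^0(\Oc(D)\otimes \Endo W)=0$ for all $D\in \Ct$, and your suggestion of handling uniformity in $D$ by an openness/semicontinuity argument is exactly Lemma \ref{open} (completeness of $\Ct$). The gap is in how you propose to prove this vanishing for a generic symplectic or orthogonal $W$. You assert that it follows from the $\delta$-stability estimates of CCL22 (their Lemmas 4.1 and 4.2). It does not: Lemma 4.2 there controls $h^0(\Hom(V^\ell,V))$ for a Hecke modification $V^\ell\subset V$, i.e.\ a single elementary transformation at one point with a prescribed kernel, not $h^0(\Endo W\otimes \cO_C(D))$ for an arbitrary effective divisor of degree $2$; and $k$-stability, being a condition on slopes of subbundles, cannot by itself rule out a nonzero traceless map $W\to W(D)$, whose kernel and image need not violate any slope inequality. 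The vanishing is genuinely delicate here precisely because a generic symplectic/orthogonal bundle is not a generic vector bundle (the introduction stresses this), so neither the Hwang--Ramanan vanishing for general stable bundles nor a stability threshold can simply be quoted.

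What the paper actually does, and what is missing from your proposal, is an explicit construction of a single bundle with the vanishing property: it realizes a symplectic (resp.\ orthogonal) bundle as an extension $0\to E\to W\to E^*\otimes L\to 0$ with $E$ a generic stable bundle and extension class generic in $H^1(\Sym^2 E\otimes L^*)$ (resp.\ $H^1(\wedge^2 E\otimes L^*)$, or the space $\Pi_j$ in the odd-rank orthogonal case), and proves Lemma \ref{sufficient}: if $E_1,E_2$ satisfy four auxiliary vanishings and the space of admissible extension classes has dimension at least $2\,\rank E_1\cdot\rank E_2+3$, then a general such extension $W$ satisfies $h^0(\Oc(D)\otimes\Endo W)=0$ for all $D\in\Ct$. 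The auxiliary vanishings are supplied by Segre-invariant bounds (Lange's theorem, Lemma \ref{Segre}) and an inductive argument for $h^0(E\otimes E(D))$ (Lemma \ref{eot}), and the Riemann--Roch dimension count on the extension space is exactly where the genus bounds $g\ge 5+2\ell$, $g\ge 7$ or $8$, and $g\ge 9$ or $14$ come from --- not from $\delta$-stability thresholds. Without some substitute for this construction, your argument does not close.
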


\begin{corollary}
Let $\M$ be one of the moduli spaces in the above theorem. Let 
\[
\tau_W \colon \mathcal K_x \to \PP (T_{W} \M) 
\]
 be the tangent map of the minimal rational component $\mathcal K$ associated to  the symplectic or orthogonal Hecke curves. Then $\tau_W$ is an embedding of the corresponding VMRT under the same assumption as in Theorem \ref{embedding}.
\end{corollary}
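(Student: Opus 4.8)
The plan is to deduce the corollary formally from Theorem \ref{embedding} together with the factorization of the tangent map recorded in Proposition \ref{prop: tangent map symplectic} and Proposition \ref{prop: tangent map orthogonal}. By those propositions, for generic $W$ the tangent map factors as
\[
\tau_W \colon \mathcal K_W \xrightarrow{\ \iota\ } \PP(\ad W) \xrightarrow{\ \Phi_W\ } \PP H^0(C, \Kc \otimes \ad W)^* = \PP(T_W \M),
\]
where $\mathcal K_W$ is $\PP(W^*)$ in the symplectic case and $\IG(2,W^*)$ in the orthogonal case, $\iota$ is the fibrewise map over $C$ described there, and $\Phi_W$ is the morphism attached to the complete linear system $|\cO_{\PP(\ad W)}(1)\otimes \pi^*\Kc|$ (which is a morphism by Proposition \ref{symplecticsmooth}, resp.\ Proposition \ref{orthosmooth}). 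Hence it suffices to prove that $\iota$ and $\Phi_W$ are both closed embeddings: their composite is then a closed embedding, and since $\mathcal K_W$ is projective, $\tau_W(\mathcal K_W)$ is closed and therefore equals the VMRT $\mathcal C_W$, so $\tau_W$ identifies $\mathcal K_W$ with the projectively embedded $\mathcal C_W$.

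First I would treat $\Phi_W$. The bundle $\ad W$ is a self-dual subbundle of $\Endo W$ on which the trace form is nondegenerate, so $\Endo W = \ad W \oplus (\ad W)^{\perp}$ as $\cO_C$-modules; tensoring with $\Kc$ and taking global sections, the restriction map $H^0(C,\Kc\otimes\Endo W)\to H^0(C,\Kc\otimes\ad W)$ is surjective. Consequently the composite
\[
\PP(\ad W)\ \hookrightarrow\ \PP(\Endo W)\ \xrightarrow{\ \Psi_W\ }\ \PP H^0(C,\Kc\otimes\Endo W)^*
\]
is precisely the morphism given by the complete linear system $|\cO_{\PP(\ad W)}(1)\otimes\pi^*\Kc|$, with image lying in the linear subspace $\PP H^0(C,\Kc\otimes\ad W)^*$; that is, this composite coincides with $\Phi_W$. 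Since $\PP(\ad W)$ is a closed subvariety of $\PP(\Endo W)$ and $\Psi_W$ is an embedding by Theorem \ref{embedding}, its restriction $\Phi_W$ is an embedding.

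Next I would check that $\iota$ is a closed embedding. This is a fibrewise assertion over $C$: on the fibre over $x\in C$, $\iota$ is, up to the linear isomorphism induced by $\omega$ (resp.\ $b$), the second Veronese embedding $\PP W_x^* \hookrightarrow \PP\Sym^2 W_x^*$ in the symplectic case, and the Plücker embedding $\IG(2,W_x^*)\hookrightarrow\PP\wedge^2 W_x^*$ in the orthogonal case; both are closed immersions with injective differential, and composing with a linear automorphism preserves this. A morphism of projective bundles over $C$ that restricts to a closed immersion on every fibre is itself a closed immersion, so $\iota$ is an embedding.

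Combining the two steps, $\tau_W=\Phi_W\circ\iota$ is a closed embedding, and as explained above its image is $\mathcal C_W$. As for obstacles: the substantive content is Theorem \ref{embedding}, which we are assuming; within the present argument the only point needing real care is the identification $\Phi_W=\Psi_W|_{\PP(\ad W)}$, i.e.\ verifying that the complete linear system on $\PP(\ad W)$ is cut out by the ambient one. This rests on the splitting $\Endo W=\ad W\oplus(\ad W)^{\perp}$, hence on $\ad W$ being a non-isotropic (self-dual) subbundle for the trace form, which holds since $\Sym^2$- and $\wedge^2$-type pieces of $\Endo W$ are non-degenerate for that form.
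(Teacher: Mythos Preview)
Your argument is correct and follows exactly the approach the paper intends: the paper's own proof says only that ``$\Psi_W$, hence $\Phi_W$, is an embedding, together with the pictures of Proposition \ref{prop: tangent map symplectic} and Proposition \ref{prop: tangent map orthogonal},'' and you have simply unpacked each implicit step (the splitting $\Endo W = \ad W \oplus (\ad W)^{\perp}$ identifying $\Phi_W$ with $\Psi_W|_{\PP(\ad W)}$, and the fibrewise Veronese/Pl\"ucker description of $\iota$) with appropriate justification.
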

\begin{proof}
This follows from the fact that $\Psi_W$, hence $\Phi_W$, is an embedding, together with the pictures of Proposition \ref{prop: tangent map symplectic} and Proposition \ref{prop: tangent map orthogonal}.
\end{proof}

The remaining parts of this section are devoted to proving Theorem \ref{embedding}.
Following \cite[Proof of Theorem 3.1]{HR04}, we shall use the fact that $\Psi_W$ is an embedding if and only if
\begin{equation} h^0 ( \Oc (D) \otimes \Endo W  ) = 0 \hbox{ for all } D \in \Ct , \label{embppty} \end{equation}
 where $C^{(2)}$ parameterizes the effective divisors of degree two. 
 
 We first show the following:
\begin{lemma} \label{open} Let $\cE \to B \times C$ be a family of vector bundles over $C$. Then the subset
\[ \left\{ b \in B : h^0 ( \Oc (D) \otimes \cE_b ) = 0 \hbox{ for all } D \in \Ct \right\} \]
is open in $B$, where $\cE_b = \cE|_{\{b \} \times C}$.
\end{lemma}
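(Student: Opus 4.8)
The plan is to realize the complement of the stated set as the image of a proper morphism, hence closed, making the set itself open. First I would form the relative second symmetric product $C^{(2)}_B := B \times C^{(2)}$ with its two projections to $B$ and to $C^{(2)}$, and introduce the universal divisor $\mathcal D \subset C^{(2)} \times C$ (the incidence divisor), pulled back to $B \times C^{(2)} \times C$. Tensoring the pullback of $\mathcal E$ by $\mathcal O(\mathcal D)$ gives a family $\mathcal F$ of vector bundles on $C$ parametrized by $B \times C^{(2)}$, whose fiber over $(b, D)$ is precisely $\mathcal O_C(D) \otimes \mathcal E_b$.

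Next I would apply upper semicontinuity of cohomology (Grauert / the standard semicontinuity theorem for a flat family of sheaves) to $\mathcal F$ over the base $B \times C^{(2)}$: the function $(b,D) \mapsto h^0(C, \mathcal O_C(D) \otimes \mathcal E_b)$ is upper semicontinuous, so the locus
\[
Z := \left\{ (b,D) \in B \times C^{(2)} : h^0(C, \mathcal O_C(D) \otimes \mathcal E_b) \ge 1 \right\}
\]
is closed in $B \times C^{(2)}$. Since $C^{(2)}$ is a smooth projective variety, the projection $p \colon B \times C^{(2)} \to B$ is proper, so $p(Z)$ is closed in $B$. The set in the statement of the lemma is exactly the complement $B \setminus p(Z)$: indeed $b \notin p(Z)$ means there is no $D \in C^{(2)}$ with $h^0(\mathcal O_C(D) \otimes \mathcal E_b) \ge 1$, which is the condition $h^0(\mathcal O_C(D) \otimes \mathcal E_b) = 0$ for all $D \in C^{(2)}$. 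Hence the set is open.

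There is no serious obstacle here; the only point requiring a word of care is the flatness of $\mathcal F$ over $B \times C^{(2)}$, which is needed to invoke semicontinuity. This holds because $\mathcal E$ is a family of vector bundles (hence flat over $B \times C$, so its pullback to $B \times C^{(2)} \times C$ is flat over $B \times C^{(2)}$), and $\mathcal O(\mathcal D)$ is a line bundle, so the tensor product remains a vector bundle on each fiber $\{(b,D)\} \times C$ and is flat over $B \times C^{(2)}$. With flatness in hand the semicontinuity theorem applies verbatim, completing the argument.
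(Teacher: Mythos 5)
Your proof is correct and takes essentially the same route as the paper's: realize the complement of the set as the image, under the proper projection from $B \times C^{(2)}$ to $B$, of the locus where $h^0\bigl(\mathcal{O}_C(D)\otimes\mathcal{E}_b\bigr)\ge 1$, and conclude by completeness of $C^{(2)}$. The only difference is that you spell out, via the universal divisor and semicontinuity for the resulting flat family over $B\times C^{(2)}$, why that locus is closed --- a step the paper simply asserts.
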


\begin{proof}
The complement of the locus in question is
\[ \left\{ b \in B : h^0 ( \Oc ( D ) \otimes \cE_b ) \ge 1 \hbox{ for some } D \in \Ct \right\} . \]
This is the image in $B$ of the closed set
\[ \left\{ ( b , D ) \in B \times \Ct : h^0 ( \Oc (D) \otimes \cE_b ) \ge 1 \right\} \]
by the projection from $B \times \Ct$. As $\Ct$ is projective and in particular complete, this projection is closed. 
 The statement follows.
\end{proof}

Applying Lemma \ref{open} to a suitable \'etale cover of each of the moduli spaces in question, we see that it suffices to exhibit a single bundle $V$ with property (\ref{embppty}). 
We now assemble some vanishing results which we shall use to this end.

\begin{lemma} \label{sufficient} Let $E_1$ and $E_2$ be vector bundles such that for all $D \in \Ct$ we have
\begin{enumerate}
\renewcommand{\labelenumi}{(\arabic{enumi})}
\item[(i)] $h^0 ( \Oc (D) \otimes \Endo E_1 ) = 0$
\item[(ii)] $h^0 ( \Oc (D) \otimes \Endo E_2 ) = 0$
\item[(iii)] $h^0 ( \Hom ( E_1, E_2 (D) )) = 0$
\item[(iv)] $h^0 ( \Hom ( E_2, E_1 (D) )) = 0$
\end{enumerate}
Let $\Pi \subseteq H^1 ( \Hom (E_2, E_1) )$ be a subspace of dimension at least $2 \cdot \rank ( E_1 ) \cdot \rank ( E_2 ) + 3$. Then if $0 \to E_1 \to W \to E_2 \to 0$ is an extension whose class $\delta$ is a general element of $\Pi$, we have $h^0 ( \Oc (D) \otimes \Endo W ) = 0$ for all $D \in \Ct$.
\end{lemma}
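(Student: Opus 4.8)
The plan is to analyse an arbitrary global homomorphism $\phi\colon W\to W(D)$, i.e. an element of $H^0(\Oc(D)\otimes\End W)$, using the subbundle $E_1\subset W$, and then to invoke genericity of the extension class $\delta$ to eliminate the one configuration that the linear-algebra part alone cannot rule out. The desired conclusion $h^0(\Oc(D)\otimes\Endo W)=0$ for all $D\in\Ct$ is exactly property (\ref{embppty}).

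First I would show that every such $\phi$ preserves the flag $0\subset E_1\subset W$: the composite $E_1\hookrightarrow W\xrightarrow{\phi}W(D)\twoheadrightarrow E_2(D)$ is a section of $\Hom(E_1,E_2(D))$, which vanishes by (iii), so $\phi(E_1)\subseteq E_1(D)$ and $\phi$ induces $\phi_1\in H^0(\Oc(D)\otimes\End E_1)$ and $\phi_2\in H^0(\Oc(D)\otimes\End E_2)$. The trace splitting $\End E_i=\Endo E_i\oplus\Oc$ together with (i) and (ii) forces $\phi_i=\lambda_i\cdot\mathrm{id}_{E_i}$ with $\lambda_i\in H^0(\Oc(D))$. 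Suppose now in addition that $\phi$ is traceless; then $\rank(E_1)\lambda_1+\rank(E_2)\lambda_2=0$ in $H^0(\Oc(D))$. If $\lambda_1=\lambda_2$, this forces $\lambda_1=\lambda_2=0$, so $\phi$ annihilates $E_1$ and therefore factors through a homomorphism $E_2\to E_1(D)$, which is zero by (iv); hence $\phi=0$. Thus any \emph{nonzero} traceless $\phi$ has $\mu:=\lambda_2-\lambda_1\neq 0$.

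The heart of the argument is the case $\mu\neq 0$. Here $\phi-\lambda_1\,\mathrm{id}_W$ annihilates $E_1$, so it factors as $W\twoheadrightarrow E_2\xrightarrow{\widetilde\psi}W(D)$, and $\widetilde\psi$ is a lift of $\mu\cdot\mathrm{id}_{E_2}$ along $W(D)\twoheadrightarrow E_2(D)$. The obstruction to such a lift is the image of $\mu\cdot\mathrm{id}_{E_2}$ under the connecting map $H^0(\Oc(D)\otimes\End E_2)\to H^1(\Hom(E_2,E_1)\otimes\Oc(D))$ of the $D$-twist of the defining sequence, and this image is the cup product $\mu\cup\delta$ of $\mu\in H^0(\Oc(D))$ with $\delta\in H^1(\Hom(E_2,E_1))$. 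Since $\mu$ vanishes on an effective divisor $D'\in\Ct$ with $D'\sim D$, after replacing $D$ by $D'$ we may take $\mu$ tautological, and then $\mu\cup\delta=0$ says precisely that $\delta$ lies in the kernel $K_{D'}$ of the natural map $H^1(\Hom(E_2,E_1))\to H^1(\Hom(E_2,E_1)(D'))$. From the torsion sequence $0\to\Hom(E_2,E_1)\to\Hom(E_2,E_1)(D')\to\Hom(E_2,E_1)|_{D'}\to 0$ and the vanishing of $H^1$ of a skyscraper sheaf, $K_{D'}$ is a quotient of $H^0(\Hom(E_2,E_1)|_{D'})$, so $\dim K_{D'}\le\deg(D')\cdot\rank(E_1)\cdot\rank(E_2)=2\cdot\rank(E_1)\cdot\rank(E_2)$.

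Finally I would run the dimension count. The locus of ``bad'' extension classes is $B:=\bigcup_{D'\in\Ct}K_{D'}$, which is the image under projection of a closed incidence subset of $\Ct\times H^1(\Hom(E_2,E_1))$ whose fibres over $\Ct$ have dimension at most $2\cdot\rank(E_1)\cdot\rank(E_2)$; as $\dim\Ct=2$, this gives $\dim\PP(B)\le 2\cdot\rank(E_1)\cdot\rank(E_2)+1$. Since $\dim\PP(\Pi)=\dim\Pi-1\ge 2\cdot\rank(E_1)\cdot\rank(E_2)+2$, the cone $B\cap\Pi$ is a proper closed subset of $\Pi$, so a general $\delta\in\Pi$ avoids $B$; for such $\delta$ no nonzero traceless $\phi$ exists for any $D\in\Ct$, that is, $h^0(\Oc(D)\otimes\Endo W)=0$ for all $D\in\Ct$. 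I expect the delicate points to be the identification of the lifting obstruction with the linear condition $\mu\cup\delta=0$, and the verification that the incidence $\{(D',[\delta]):\delta\in K_{D'}\}\subseteq\Ct\times\PP H^1(\Hom(E_2,E_1))$ is genuinely closed, so that its image is closed of the asserted dimension because $\Ct$ is complete; this is handled via the universal degree-two divisor on $\Ct\times C$ together with cohomology and base change, exactly as in \cite[Proof of Theorem 3.1]{HR04}.
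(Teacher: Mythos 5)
Your proposal is correct and follows essentially the same route as the paper's proof: use (iii) and (i) to show the restriction to $E_1$ is a twist of the identity, subtract it off to get a lift of a scalar multiple of $\Iden_{E_2}$ to $W(D)$, identify the lifting obstruction as the vanishing of the image of $\delta$ in $H^1(\Hom(E_2,E_1)(D'))$, bound that kernel by $2\cdot\rank(E_1)\cdot\rank(E_2)$, and sweep out over $\Ct$ to beat $\dim\Pi$. The only cosmetic difference is that you work directly with traceless $\phi$ and the trace relation between $\lambda_1$ and $\lambda_2$, whereas the paper shows every $\alpha\colon W\to W(D)$ is $\Iden_W\otimes\mathbf{s}$ and then specializes; the dimension count and the use of genericity of $\delta\in\Pi$ are identical.
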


\begin{proof} Let $W$ be an extension whose class is a general element of $\Pi$. Let $D$ be an element of $\Ct$, and suppose that $\alpha \colon W \to W (D)$ is a nonzero map. We shall show that under the hypotheses above, $\alpha = \Iden_W \otimes \mathbf{s}$ for some $\mathbf{s} \in H^0 ( \Oc (D) )$. This will suffice to prove the statement.

By (iii), the restriction $\alpha |_{ E_1}$ factorizes via $E_1(D)$. Therefore, we have a diagram
\[ \xymatrix{ 0 \ar[r] & E_1 \ar[r] \ar[d] & W \ar[r]^q \ar[d]^\alpha & E_2 \ar[r] \ar[d] & 0 \\
 0 \ar[r] & E_1 (D) \ar[r] & W (D) \ar[r]^{q'} & E_2 (D) \ar[r] & 0. } \]
 By (i), then, $\alpha|_{E_1} = \Iden_{E_1} \otimes \mathbf{s}$ for some $\mathbf{s} \in H^0 ( \Oc (D) )$. (If $C$ is nonhyperelliptic then $(\mathbf{s}) = D$.) Therefore, $\alpha - \Iden_W \otimes \mathbf{s}$ is a map $W \to W (D)$ vanishing on $E_1$. Hence
\begin{equation} \alpha - \Iden_W \otimes \mathbf{s} \ = \ \beta \circ q \label{alphamso} \end{equation}
for some $\beta \in H^0 ( \Hom ( E_2 , W (D) ))$. 

If $\beta \neq 0$ then by (iv) we see that $q' \circ \beta$ is a nonzero map $E_2 \to E_2 (D)$. By (ii), we have
\[ q' \circ \beta \ = \ \Iden_{E_2} \otimes \mathbf{s}_{D'} \]
for some $D' \in |D|$ and $\mathbf{s}_{D'} \in H^0 ( \Oc (D) )$ satisfying $( \mathbf{s}_{D'} ) = D'$. (If $C$ is nonhyperelliptic then $D' = D$.) In particular, the map $\Iden_{E_2} \otimes \mathbf{s}_{D'} \colon E_2 \to E_2 (D')$ lifts to $W$. This means that
\[ \delta ( W ) \ \in \ \ker \left( \Iden_{E_2} \otimes \mathbf{s}_{D'} \colon H^1 ( \Hom ( E_2 ( D ) , E_1 ( D ) ) ) \to H^1 ( \Hom ( E_2 ( D - D' ), E_1 ( D ) ) ) \right) . \]
In view of the exact sequence
\[ 0 \ \to \ E_2^* \otimes E_1 \ \to \ E_2^* \otimes E_1 (D') \ \to \ E_2^* \otimes E_1 (D')|_{D'} \ \to \ 0 , \]
this kernel has dimension at most $2 \cdot \rank (E_2) \cdot \rank (E_1)$. The union
\[ \bigcup_{D' \in \Ct} \ker \left( \left( \Iden_{E_2} \otimes \mathbf{s}_{D'} \right)^* \right) \]
is therefore of dimension at most $2 \cdot \rank (E_2) \cdot \rank (E_1) + 2 < \dim \Pi$. But since $\delta(W)$ was assumed to be general in $\Pi$, we may assume that the map $\Iden_{E_2} \otimes \mathbf{s}_{D'}$ does not lift to $W$. Therefore, we must have $\beta = 0$.

By (\ref{alphamso}), we obtain $\alpha = \Iden_W \otimes \mathbf{s}$, as desired. \end{proof}

\begin{lemma} \label{rovan} Suppose $t$ and $s$ are integers with $s \ge 0$ and $t + 2s < g$. Let $N$ be a generic line bundle  in $\Pic^t (C)$. Then $h^0 ( N ( D )) = 0$ for all $D \in C^{(s)}$, where $C^{(s)}$ parameterizes the effective divisors of degree $s$. \end{lemma}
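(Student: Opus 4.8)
The plan is a dimension count on an incidence variety inside $\Pic^t(C)$, showing that the ``bad'' locus of line bundles is a proper closed subset. Write $d := t + s$. If $d < 0$ then $\deg(N(D)) < 0$ for every $D \in C^{(s)}$, so $h^0(N(D)) = 0$ automatically and there is nothing to prove; assume henceforth $d \ge 0$, so that the symmetric product $C^{(d)}$ is a genuine projective variety.

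First I would set up the incidence variety
\[
\mathcal I \ = \ \left\{ (N, D) \in \Pic^t(C) \times C^{(s)} \ : \ h^0 ( N(D) ) \ge 1 \right\}
\]
and identify it concretely: $(N,D) \in \mathcal I$ exactly when $N(D)$ is effective, i.e. $N(D) \cong \Oc(E)$ for some $E \in C^{(d)}$, equivalently $N \cong \Oc(E - D)$. Hence $\mathcal I$ is the image of the morphism $C^{(d)} \times C^{(s)} \to \Pic^t(C) \times C^{(s)}$ sending $(E,D)$ to $(\Oc(E - D), D)$. In particular $\mathcal I$ is closed, being the image of a projective variety under a morphism of varieties, and $\dim \mathcal I \le \dim( C^{(d)} \times C^{(s)}) = d + s = t + 2s$.

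Next I would project to the first factor. The image $p(\mathcal I) \subseteq \Pic^t(C)$ of the projection $p$ is again closed, with $\dim p(\mathcal I) \le t + 2s < g = \dim \Pic^t(C)$; since $\Pic^t(C)$ is irreducible, $p(\mathcal I)$ is a proper closed subset, so $U := \Pic^t(C) \setminus p(\mathcal I)$ is a nonempty dense open set. By the description of $\mathcal I$, for every $N \in U$ there is no $D \in C^{(s)}$ with $h^0(N(D)) \ge 1$; that is, $h^0(N(D)) = 0$ for all $D \in C^{(s)}$, which is the assertion for generic $N$.

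There is no serious obstacle here: the argument is just the standard estimate that the effective locus $W_d \subset \Pic^d(C)$ has dimension $\min(d,g)$, fibred over $C^{(s)}$, and the hypothesis $t + 2s < g$ is precisely what makes the count work. The only points needing a word of care are the reduction to $d \ge 0$ and the fact that $\mathcal I$, hence $p(\mathcal I)$, is genuinely Zariski-closed—it is the image of the projective variety $C^{(d)} \times C^{(s)}$—so that the strict inequality $t + 2s < g$ really does carve out a nonempty open set of good $N$, rather than merely a complement of countably many subvarieties.
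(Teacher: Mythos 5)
Your argument is correct and is essentially the paper's own proof: both reduce to the case $t+s\ge 0$ and then bound the bad locus as the image of $C^{(t+s)}\times C^{(s)}\to\Pic^t(C)$, $(D_1,D)\mapsto\Oc(D_1-D)$, which has dimension at most $t+2s<g$. The only difference is that you make explicit the closedness of this image, whereas the paper only notes it is not dense; both suffice for the genericity claim.
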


\begin{proof} If $t + s < 0$ then this is clear. Otherwise: 
If $h^0(C, N(D)) \neq 0$ for some $D \in C^{(s)}$, then $N$ is of the form $\Oc (D_1-D)$ for some $D_1 \in C^{(t+s)}$ and $D \in C^{(s)}$. Hence the locus of such $N$ is of dimension at most $t+2s$ in $\Pic^t(C)$. (We take $C^{(0)} = \{ \Oc \}$.) By hypothesis, this locus is not dense in $\Pic^t(C)$. The statement follows. 
\end{proof}

\begin{lemma} \label{Segre}
Let $r, s, t$ be integers with $r \ge 2$,  $s \ge 0$ and $t +rs < (r-1)(g-1)$.
Let $G$ be a generic stable bundle of rank $r$ and degree $t$. Then $h^0(G(D)) =0$ for all $D \in C^{(s)}$.
\end{lemma}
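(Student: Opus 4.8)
The plan is to translate the vanishing of $h^0(G(D))$ into the nonexistence of sub-line-bundles of $G$ of controlled degree, and then run a dimension count inside the irreducible moduli space $\cM(r,t)$ of stable bundles of rank $r$ and degree $t$ on $C$, which has dimension $r^2(g-1)+1$. First I would observe that it suffices to prove that a generic stable $G$ of rank $r$ and degree $t$ admits \emph{no} sub-line-bundle of degree $\ge -s$. Indeed, if $h^0(G(D)) \ne 0$ for some $D \in C^{(s)}$, then a nonzero section $\mathbf{s}\colon \Oc \to G(D)$ is injective (as $G(D)$ is torsion-free), so the saturation $L \subseteq G(D)$ of its image is a sub-line-bundle with $\deg L \ge 0$; twisting by $\Oc(-D)$, the subsheaf $M := L(-D) \subseteq G$ is then a sub-line-bundle of degree $\deg L - s \ge -s$. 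So the desired property $h^0(\Oc(D)\otimes G) = 0$ for all $D \in C^{(s)}$ follows from that nonexistence.

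Next, by stability every sub-line-bundle of $G$ has degree $< \mu(G) = t/r$, so after passing to saturations it is enough to show that for each integer $j$ with $-s \le j < t/r$ (a finite, possibly empty, range) the locus $Z_j \subseteq \cM(r,t)$ of bundles admitting a sub-line-bundle of degree $j$ has dimension strictly less than $r^2(g-1)+1$. Every member of $Z_j$ is an extension $0 \to M \to G \to Q \to 0$ with $M \in \Pic^j(C)$ and $Q$ a vector bundle of rank $r-1$ and degree $t-j$; since $j < t/r$ forces $\mu(M) < \mu(Q)$, for $Q$ stable one has $\Hom(Q,M) = H^0(C, Q^* \otimes M) = 0$, so $\dim H^1(C, Q^* \otimes M) = -\chi(Q^* \otimes M) = t - rj + (r-1)(g-1)$ by Riemann--Roch. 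Accounting for the choices of $M$, of $Q$, and of the extension class up to scalar, together with the standard bookkeeping over the Harder--Narasimhan strata of $Q$ (the Segre stratification, cf.\ Lange and Brambila-Paz--Lange), one gets
\[
\dim Z_j \ \le \ g + \bigl( (r-1)^2(g-1)+1 \bigr) + \bigl( t - rj + (r-1)(g-1) \bigr) - 1 \ = \ \bigl( r^2(g-1)+1 \bigr) - \bigl( (r-1)(g-1) - (t-rj) \bigr).
\]
Since $j \ge -s$, the hypothesis $t + rs < (r-1)(g-1)$ gives $(r-1)(g-1) - (t-rj) \ge (r-1)(g-1) - (t+rs) > 0$, so $\dim Z_j < \dim \cM(r,t)$ for every $j$ in range; as the union $\bigcup_j Z_j$ is finite, a generic stable $G$ lies outside it and hence, by the first paragraph, satisfies $h^0(\Oc(D) \otimes G) = 0$ for all $D \in C^{(s)}$.

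The step I expect to be the main obstacle is the displayed bound on $\dim Z_j$: for unstable $Q$ the quantity $\dim H^1(C, Q^* \otimes M)$ jumps upward on proper subloci of the $(M,Q)$-parameter space, and one must verify that each jump of size $e$ is compensated by a drop of at least $e$ in the dimension of the stratum being parametrized (on the locus where $Q$ has a fixed Harder--Narasimhan type the moduli of $Q$ is correspondingly smaller, and the extra sections are controlled by that type). This compensation is exactly what the standard analysis of the Segre/Nagata stratification of moduli of bundles on a curve provides, so I would cite it rather than reprove it; granted that input, the proof is the clean dimension count above, which matches the hypothesis on the nose.
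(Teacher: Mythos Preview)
Your argument is correct and rests on the same idea as the paper's, namely that a nonzero section of $G(D)$ produces a line subbundle of $G$ of degree $\ge -s$, which cannot exist for generic $G$ under the hypothesis $t+rs<(r-1)(g-1)$. The difference is one of packaging. The paper does not carry out the dimension count you sketch; instead it observes that a sheaf injection $\Oc(-D)\hookrightarrow G$ forces the first Segre invariant to satisfy $s_1(G)\le t+rs$, and then simply cites Lange's theorem that a generic stable bundle has $s_1(G)\ge (r-1)(g-1)$, obtaining an immediate contradiction. In other words, what you identify as your ``main obstacle'' (the compensation between jumping $h^1$ and shrinking strata for unstable $Q$) is exactly the content of Lange's result, and the paper chooses to quote it rather than re-derive it. Your approach buys self-containedness at the price of reproving a known bound; the paper's approach is a two-line argument once the citation is in hand.
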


\begin{proof}
A nonzero section of $G(D)$ gives   a sheaf injection $\Oc (-D) \to G$. This implies that the first Segre invariant of $G$ is bounded by
\[ s_1 ( F  ) \ \le \  \deg (G) - \rank (G) \cdot \deg (\Oc(-D)) \ = \ t + rs  . \] 
But as $G$ is generic, by \cite[Satz 2.2]{L83} we have
\[ s_1 ( G  ) \ \ge \ ( r-1 ) ( g-1 ) . \]  
Hence we have an  inequality $(r-1)(g-1) \le t+rs$, which contradicts the assumption.  
Thus     $h^0 ( C, G(D) ) = 0$ for all $D$.
  \end{proof}

\begin{lemma} \label{eot}  Let $E$ be  a generic stable bundle  over $C$ of rank $r \ge 1$ and degree $e$. \\
(1) If  $e\ge 0, s \ge 1$ and $2e +2s < g$, then  $h^0 ( C, E \otimes E (D) ) = 0$ for all $D \in C^{(s)}$.\\
(2) If $ e<0, s \ge 1$ and $2s <g$,  then  $h^0 ( C, E \otimes E (D) ) = 0$ for all $D \in C^{(s)}$.
  \end{lemma}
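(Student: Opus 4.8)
The plan is to prove both parts simultaneously by a genericity argument: the condition ``$h^0(C, E \otimes E(D)) = 0$ for all $D \in C^{(s)}$'' is open in a family of bundles (by the same argument as in Lemma \ref{open}, since $C^{(s)}$ is projective hence complete), so it suffices to exhibit a \emph{single} stable bundle $E$ of rank $r$ and degree $e$ with this vanishing property. The natural candidate is a \emph{general extension} built from line bundles, which allows the problem to be reduced to the rank-one vanishing statement of Lemma \ref{rovan}.

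First I would treat the baseline case $r = 1$: here $E = N$ is a line bundle of degree $e$, and $E \otimes E(D) = N^2(D)$ has degree $2e + s$. In case (2), when $e < 0$, actually $E \otimes E$ has negative degree, but I should be careful: $2e + s$ need not be negative, so I still invoke Lemma \ref{rovan} with $t = 2e$ and the given $s$, noting $2e + 2s < 2s < g$ in case (2) and $2e + 2s < g$ is exactly the hypothesis in case (1). Thus a generic $N$ works, establishing the rank-one case. For $r \ge 2$, I would proceed by induction on $r$, writing $E$ as a general extension
\[
0 \to E' \to E \to N \to 0
\]
where $E'$ is a generic stable bundle of rank $r-1$ and $N$ is a generic line bundle, with degrees chosen so that $E$ is stable of the right total degree $e$ (this requires choosing $\deg N$ close to $e/r$ so that slopes interlace appropriately; for $e \ge 0$ one can arrange $\deg E', \deg N \ge 0$, and for $e < 0$ one arranges both summand-degrees negative, or at least keeps the relevant numerical bounds). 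Then $E \otimes E(D)$ is filtered by the four pieces $E' \otimes E'(D)$, $E' \otimes N(D)$, $N \otimes E'(D)$, $N \otimes N(D) = N^2(D)$, and a nonzero section of $E \otimes E(D)$ would produce a nonzero section of one of these. The outer terms vanish by the inductive hypothesis (for $E' \otimes E'(D)$) and by Lemma \ref{rovan} (for $N^2(D)$), so the crux is the mixed terms $H^0(E'^* \otimes \cdot)$-type groups $H^0(C, E' \otimes N(D))$ and $H^0(C, N \otimes E'(D))$; these can be controlled either by another application of genericity (Lemma \ref{Segre} applied to $E' \otimes N$, a generic stable bundle of rank $r-1$) or by a further filtration of $E'$ into line bundles, reducing everything to Lemma \ref{rovan} with $t$ ranging over the pairwise sums of the line-bundle degrees and $s$ as given.

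The main obstacle I anticipate is \emph{bookkeeping the numerical hypotheses through the induction}: one must verify that the bound $2e + 2s < g$ (case 1) or $2s < g$ (case 2) is strong enough that every line-bundle degree $d_i$ arising in a full splitting of a general $E$ satisfies $d_i + d_j + 2s < g$, i.e. that Lemma \ref{rovan} applies to every pairwise tensor $N_i \otimes N_j (D)$. For a balanced choice of line bundles each $d_i$ is within $O(1)$ of $e/r$, so $d_i + d_j$ is close to $2e/r \le 2e$ when $e \ge 0$, and the hypothesis $2e + 2s < g$ should cover it (after absorbing small integer rounding errors, possibly at the cost of a harmless adjustment); in case (2), each $d_i < 0$ gives $d_i + d_j < 0$, so $d_i + d_j + 2s < 2s < g$ immediately. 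The only delicate point is ensuring stability of the general extension $E$ at each stage, which is standard (a general extension of bundles with close slopes is stable, provided $g \ge 2$), but I would state it carefully and cite the relevant fact. Once the numerical inequalities are checked, the vanishing follows formally from the long exact sequences in cohomology.
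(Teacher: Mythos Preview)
Your approach is essentially correct and follows the same inductive strategy as the paper, but the paper makes two simplifications that remove exactly the bookkeeping you flag as the ``main obstacle''.

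First, you build $E$ as a general \emph{extension} of a line bundle $N$ and a rank-$(r-1)$ bundle $E'$ with balanced degrees, and then worry about arranging stability. The paper instead takes the \emph{direct sum} $E = L_0 \oplus F$ with $L_0 \in \Pic^0(C)$ generic and $F$ a generic stable bundle of rank $r-1$ and degree $e$. This $E$ is of course unstable, but that is harmless: the openness statement (Lemma~\ref{open}) applies to any family of bundles, so exhibiting a single bundle---stable or not---with the vanishing already forces the generic stable bundle to have it. Fixing $\deg L_0 = 0$ (rather than balancing near $e/r$) also eliminates the rounding issues: the mixed term becomes $F \otimes L_0(D)$ with $F$ generic of rank $r-1$ and degree exactly $e$, so Lemma~\ref{rovan} (if $r-1=1$) or Lemma~\ref{Segre} (if $r-1\ge 2$) applies with clean numerics.

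Second, for part~(2) you propose to run the same induction directly with negative degrees. The paper instead reduces (2) to (1): take a stable $E_0$ of rank $r$ and degree $0$ with the vanishing property (furnished by part~(1) with $e=0$, using the hypothesis $2s<g$), and choose an elementary transformation $\widetilde E \subset E_0$ of degree $e$. Then $\widetilde E \otimes \widetilde E(D)$ is a subsheaf of $E_0 \otimes E_0(D)$ and inherits the vanishing for free. This sidesteps any separate verification in the negative-degree regime.
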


\begin{proof} 
By Lemma  \ref{open}, the locus of points $E $ which satisfy the desired vanishing property is open.
Thus  it suffices to exhibit a single $E $ with the desired property. (Note that even if such an $E$ is unstable, we can continuously deform it to get a generic stable one with the same vanishing property.)

We  first assume $e \ge 0$ and proceed by induction on $r  $. For $r = 1$, this holds by Lemma \ref{rovan}. 
Suppose now that $r \ge 2$. By induction hypothesis, there is  a stable vector bundle $F$ of rank $r-1$ and degree $e$ with the desired vanishing property, which can be assumed to be  generic in the moduli.
Fix a generic line bundle $L_0 \in \Pic^0(C)$ and put  $E = L_0 \oplus F$.  Then  $E \otimes E (D)$ has four direct summands, and it suffices to show all of them have no sections. 



By induction we may assume that $ h^0 ( C, F \otimes F ( D ) ) = 0$ for all $D \in \Ct$.
Also we note that $h^0 ( C, (L_0)^2 (D) ) = 0$ for all $D \in \Ct$  by Lemma \ref{rovan}.
Finally we show that $h^0 ( C, F \otimes L_0 (D) ) = 0$ for all $D$. If $F$ is a line bundle, the  condition for the vanishing is $e+2s <g$ by Lemma \ref{rovan}, and this holds by assumption. If $\rank (F) = r-1 \ge 2$, then   the  condition for the vanishing is $e+(r-1)s < (r-2)(g-1)$ by Lemma \ref{Segre}. Also this holds by assumption, noting that $s \ge 1$ and $r  = \rank (F) +1 \ge 3$.

It follows that for a generic extension $E$ given above, we have $h^0 ( C, E \otimes E (D) ) = 0$ for all $D \in C^{(s)}$.

To finish, we need to find a bundle of degree $e<0$ with the vanishing property. 
By the part (1), we may choose a stable bundle $E $ of rank $r$ and degree 0  satisfying $h^0(E \otimes E (D)) =0$ for all $D \in C^{(s)}$. Let $\widetilde{E} $ be obtained by an elementary transformation $0 \to \widetilde{E} \to E \to \tau \to 0$, where $\deg (\tau) = e$. Then $\widetilde{E} \otimes \widetilde{E} (D)$ is a subsheaf of $E \otimes E ( D )$, and by the vanishing result for $E$, we obtain $h^0 ( C, \widetilde{E} \otimes \widetilde{E} (D) ) = 0$. This completes the proof.
 \end{proof}
 
  Also we record the following cohomology vanishing result \cite[Proposition 3.2]{HR04}, which will be used later:
 \begin{proposition} \label{HRvanishing}
 \ For a general stable vector bundle $F$ of arbitrary rank and degree, $H^0(C, (\End_0F) (D)) = 0$ for any effective divisor $D$ of degree $d$ whenever $g \ge \frac{3}{2}d +2$. \qed
 \end{proposition}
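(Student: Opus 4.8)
\emph{Approach.} The plan is to reduce to a dimension count on the locus of bundles carrying a nonzero trace-free twisted endomorphism. A nonzero section of $(\Endo F)(D)$ is exactly a nonzero trace-free homomorphism $\phi\colon F\to F(D)$. Exactly as in Lemma \ref{open}, the property that $h^0((\Endo F)(D))=0$ holds for all $D$ simultaneously is open on the moduli space $\cM:=\rSU_C(r,e)$: the locus where it fails is the image in $\cM$ of a closed subset of $\cM\times C^{(d)}$ under the projection, which is proper since $C^{(d)}$ is complete. Hence it suffices to prove that the ``bad locus'' $\mathcal B\subseteq\cM$, consisting of those $F$ carrying a nonzero trace-free $\phi\colon F\to F(D)$ for \emph{some} $D\in C^{(d)}$, is not dense; I will bound $\dim\mathcal B$ and show $\dim\mathcal B<\dim\cM=(r^2-1)(g-1)$ under the hypothesis $g\ge\tfrac32 d+2$.

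\emph{Main step: stratify by the characteristic data.} The coefficients of the characteristic polynomial of a trace-free $\phi$ define a point of the Hitchin base $B_D=\bigoplus_{i=2}^{r}H^0(\Oc(iD))$ together with a spectral curve $\widetilde C\subset\mathrm{Tot}(\Oc(D))$, an $r$-sheeted cover of $C$ with $p_a(\widetilde C)=1+r(g-1)+\binom r2 d$. On the open stratum where $\widetilde C$ is integral, the spectral correspondence identifies the pairs $(F,\phi)$ lying over a fixed $\widetilde C$ with rank-one torsion-free sheaves on $\widetilde C$ of fixed determinant, a torsor under $\mathrm{Prym}(\widetilde C/C)$ of dimension $p_a(\widetilde C)-g$. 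Varying $D\in C^{(d)}$ and the point of $B_D$ shows this part of $\mathcal B$ is swept out by a family of dimension at most $d+\dim B_D+\bigl(p_a(\widetilde C)-g\bigr)=\dim B_D+d+(r-1)(g-1)+\binom r2 d$. Bounding $h^0(\Oc(iD))\le\tfrac{id}{2}+1$ in the special range (Clifford) and $h^0(\Oc(iD))=id+1-g$ beyond it, one checks that this quantity stays strictly below $(r^2-1)(g-1)$; the binding constraint is of the shape $g>d+1$, so the hypothesis $g\ge\tfrac32 d+2$ leaves room to spare.

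\emph{The main obstacle: the non-integral strata.} When the characteristic polynomial factors, $\phi$ preserves a proper subbundle $E\subsetneq F$ and induces twisted endomorphisms on $E$ and on $F/E$; here I would induct on the rank, using the maximal Segre invariants of a general $F$ to bound $\deg E$ and invoking Lemma \ref{Segre} and Lemma \ref{rovan} to kill the off-diagonal homomorphisms, which is precisely what those lemmas are built for. The genuinely delicate case is the \emph{non-reduced} spectral curve, i.e.\ a generically nilpotent $\phi$, where the compactified-Jacobian count is unavailable: there I would argue directly that a nonzero nilpotent $\phi$ produces a sub-line-bundle $L\subseteq F$ together with a nonzero map $F/L\to L(D)$ (or an iterated version in higher rank), and that the resulting numerical inequality contradicts the Segre bound $s_1(F)\ge(r-1)(g-1)$ of a general $F$ once $g\ge\tfrac32 d+2$. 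Assembling the integral, reducible and non-reduced strata, each of dimension strictly less than $\dim\cM$, gives $\mathcal B\ne\cM$, and the opening reduction then yields the vanishing for a general $F$. The bulk of the work, and the principal difficulty, is the uniform-in-$(r,d)$ bookkeeping of these dimension estimates together with the careful handling of the nilpotent stratum.
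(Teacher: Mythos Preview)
The paper does not prove this proposition at all: it is recorded verbatim from \cite[Proposition 3.2]{HR04} and closed with a \qed\ immediately after the statement. There is therefore no ``paper's own proof'' to compare against; the authors simply quote a known vanishing result and use it as a black box in the later lemmas.

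Your proposal, by contrast, sketches an independent argument via a spectral-curve/Hitchin-base dimension count. As a strategy this is plausible, but what you have written is an outline rather than a proof, and you say as much in your last sentence. A few concrete points where the sketch would need real work before it could stand on its own:
\begin{itemize}
\item You announce that you will bound $\dim\mathcal B$ and show $\dim\mathcal B<\dim\cM$, but in the nilpotent stratum you switch to arguing that a \emph{general} $F$ (with maximal Segre invariant) cannot carry such a $\phi$. These are compatible conclusions, but the logic needs to be kept straight: either carry out an honest dimension estimate on the nilpotent locus, or argue throughout that a general $F$ admits no such $\phi$ and drop the global dimension count.
\item The claim that on the integral stratum ``the binding constraint is of the shape $g>d+1$'' is asserted, not computed. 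Under the hypothesis $g\ge\tfrac32 d+2$ every $\Oc(iD)$ with $2\le i\le r$ may well be special, so you must actually run the Clifford bound through the sum and check the inequality; the Riemann--Roch bound you mention is in the wrong range.
\item Invoking Lemma \ref{Segre} and Lemma \ref{rovan} ``to kill the off-diagonal homomorphisms'' in the reducible-spectral-curve case presupposes that the sub- and quotient bundles of $F$ are themselves general, which they need not be; you would have to replace this by a parameter count over all possible filtrations.
\end{itemize}
None of these is fatal to the approach, but each is genuine work, and the proposal as written is not yet a proof. Given that the paper is content to cite \cite{HR04}, the cleanest option is to do the same; if you want a self-contained argument, the original proof in \cite{HR04} is more direct than the spectral route and worth consulting.
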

 
 Now we apply these results to  show the desired vanishing property for a generic symplectic and orthogonal bundle of even rank. Consider an extension  
 \[
(\ast) \ \ \ \ \ 0 \to E \to W \to E^* \otimes L \to 0.
\]
for $E \in \rSU ( r, e )$ which is a generic stable bundle.   Recall that, by \cite[Criterion 2.1]{H07}, we get a  symplectic  bundle $W$ if we choose $(\ast)$  in $H^1 ( \Sym^2 E  \otimes L^* )$, and an orthogonal  bundle if we choose $(\ast)$ in $H^1 ( \wedge^2 E  \otimes L^* )$. Also in both cases, $E \to W$ is an isotropic subbundle. (From now on whenever we discuss  orthogonal bundles of even rank, we consider those bundles with an isotropic subbundle of the half rank only.)

  Once we find a symplectic/orthogonal bundle $W$ in this extension which has the desired vanishing property  (\ref{embppty}), by deformation this will show that the vanishing property holds for a general stable symplectic/orthogonal bundle with the same topological invariants.  The only topological invariants of a symplectic bundle are rank and degree, while  we need to additionally consider the 2nd Stiefel--Whitney class  for an  orthogonal bundles. Note that we may assume $L= \cO_C$ when $\deg(L)$ is even and $L = \cO_C(x)$ for some $x \in C$ when $\deg(L)$ is odd.

If $L = \cO_C$ so that $\deg (W) =0$, the moduli space $\mathcal M O_C(2r,  \cO_C)$ has two  components classified by the 2nd Stiefel--Whitney class $w_2(W)$ such that the degree of any rank $r$ isotropic subbundle of $W$ has the same parity as $w_2(W)$. On the other hand, if $L = \cO_C(x)$ so that $\deg (W)=r$, then  by \cite[\S2]{CCH21} every orthogonal bundle $W \in \mathcal M O_C(2r,  \cO_C(x))$ has rank $r$ isotropic subbundles both of even degree and of odd degree.

 Hence it suffices to show that  a bundle $W$ obtained by a generic extension $(\ast)$  has the vanishing property  (\ref{embppty}) in each of the following cases:
\begin{itemize}
\item For symplectic bundles: $e = \deg E = 0$ and  $L= \cO_C$ or $\cO_C(x)$.
\item For orthogonal bundles:  ($L= \cO_C$ and $e =   -1,0$) or ($L = \cO_C(x)$ and $e=0$). 
\end{itemize}

\begin{proposition} \label{exhibitV}    Let $\ell = \deg (L) \in \{0,1\}$.
\begin{enumerate}
\item    For $r \ge 2$, suppose $g \ge 5+2 \ell$.   If  $(\ast)$ is a symplectic extension defined by a generic class in $H^1 ( \Sym^2 E  \otimes L^* )$ where $e:=\deg E = 0$, then $h^0 ( \Oc (D) \otimes \Endo W ) = 0$ for all $D \in \Ct$. \\ 
\item   Suppose  either ($r =3, \: g \ge 8$) or ($r \ge 4, \: g \ge 7$).  If $(\ast)$ is  an orthogonal extension defined by a generic class in $H^1 ( \wedge^2 E  \otimes L^*)$    where  $e:=\deg E \in \{-1, 0\}$,  then $h^0 ( \Oc (D) \otimes \Endo W ) = 0$ for all $D \in \Ct$.
\end{enumerate} 
\end{proposition}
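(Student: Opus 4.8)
The plan is to obtain this as a direct application of Lemma \ref{sufficient} with $E_1 = E$ and $E_2 = E^* \otimes L$. With this choice $(\ast)$ is precisely an extension $0 \to E_1 \to W \to E_2 \to 0$ whose class lies in $H^1 ( \Hom ( E_2 , E_1 ) ) = H^1 ( E \otimes E \otimes L^* )$, and I would take $\Pi := H^1 ( \Sym^2 E \otimes L^* )$ in the symplectic case and $\Pi := H^1 ( \wedge^2 E \otimes L^* )$ in the orthogonal case. Under the decomposition $E \otimes E = \Sym^2 E \oplus \wedge^2 E$ these are subspaces of $H^1 ( \Hom ( E_2 , E_1 ) )$, and by \cite[Criterion 2.1]{H07} a general class in $\Pi$ produces a bundle $W$ of the required symplectic (resp.\ orthogonal) type in which $E$ sits as an isotropic subbundle. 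Hence the conclusion of Lemma \ref{sufficient} is exactly the assertion of the proposition, and it remains to verify hypotheses (i)--(iv) of that lemma together with the bound $\dim \Pi \ge 2 \cdot \rank ( E_1 ) \cdot \rank ( E_2 ) + 3 = 2 r^2 + 3$.

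For (i) and (ii): noting that $\Endo ( E^* \otimes L ) \cong \Endo E$ and that $E$ is generic stable of rank $r$, Proposition \ref{HRvanishing} with $d = 2$ gives $h^0 ( \Oc (D) \otimes \Endo E ) = 0$ for every $D \in \Ct$ whenever $g \ge 5$, which holds in all cases under consideration. For (iii) and (iv) I would rewrite $\Hom ( E_1 , E_2 (D) ) = E^* \otimes E^* \otimes L (D)$ and $\Hom ( E_2 , E_1 (D) ) = E \otimes E \otimes L^* (D)$: when $L = \Oc$ these are $E^* \otimes E^* (D)$ and $E \otimes E (D)$ with $D \in \Ct$, while when $L = \Oc(x)$ they are $E^* \otimes E^* (D + x)$ with $D + x \in C^{(3)}$, and $E \otimes E (D - x) \subseteq E \otimes E (D)$. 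Applying Lemma \ref{eot} to the generic stable bundles $E$ (of degree $e \in \{-1,0\}$) and $E^*$ (of degree $-e \in \{0,1\}$), one finds that (iii) and (iv) hold for $g \ge 5$ when $(\ell, e) = (0,0)$ and for $g \ge 7$ when $(\ell, e) = (0,-1)$ or $(\ell, e) = (1,0)$ --- in every case within the stated bound on $g$.

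For the dimension bound I would use Riemann--Roch in the form $\dim \Pi \ge \rank \cdot (g-1) - \deg$. Since $\Sym^2 E \otimes L^*$ has rank $\tfrac{r(r+1)}{2}$ and degree $(r+1)e - \tfrac{r(r+1)}{2}\ell$, in the symplectic case ($e=0$) this reads $\dim \Pi \ge \tfrac{r(r+1)}{2}(g - 1 + \ell)$, which exceeds $2 r^2 + 3$ as soon as $r \ge 2$ and $g \ge 5 + 2\ell$. Since $\wedge^2 E \otimes L^*$ has rank $\tfrac{r(r-1)}{2}$ and degree $(r-1)e - \tfrac{r(r-1)}{2}\ell$, in the orthogonal case one gets $\dim \Pi \ge -(r-1)e + \tfrac{r(r-1)}{2}(g - 1 + \ell)$, and comparing with $2 r^2 + 3$ over the cases $(\ell,e) \in \{(0,0),(0,-1),(1,0)\}$ gives an inequality valid for $r \ge 4,\, g \ge 7$ and, more tightly, for $r = 3,\, g \ge 8$. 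Once (i)--(iv) and this bound are established, Lemma \ref{sufficient} applies and finishes the proof.

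The argument is in essence just a matter of assembling the vanishing Lemmas \ref{eot} and \ref{HRvanishing} and feeding them into Lemma \ref{sufficient}; the one point requiring genuine care is keeping the genus thresholds consistent across the split into $(\ell, e)$-cases, matching those coming from hypothesis (iii) (which is sensitive both to $\ell$ and to the sign of $e$) against those coming from the dimension estimate for $\Pi$. The latter is tightest in the orthogonal rank-$6$ case ($r = 3$), where it is essentially an equality at $g = 8$, and this is what pins down the hypothesis $g \ge 8$ there.
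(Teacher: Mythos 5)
Your proposal is correct and follows essentially the same route as the paper: feed the two halves of the extension $(\ast)$ into Lemma \ref{sufficient}, verify (i)--(ii) via Proposition \ref{HRvanishing}, (iii)--(iv) via Lemma \ref{eot} with the case split on $(\ell,e)$, and check $\dim\Pi \ge 2r^2+3$ by Riemann--Roch. The only (immaterial) difference is that the paper takes $E_1=E^*$, $E_2=E\otimes L$, which merely interchanges the roles of conditions (iii) and (iv) relative to your choice.
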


\begin{proof}  

For the vanishing, let us apply Lemma \ref{sufficient} with $E_1 = E^*$ and $E_2 = E \otimes L$. As $E$ is general in moduli and $g \ge 5$, conditions (i) and (ii) follow from Proposition \ref{HRvanishing}.  

Conditions (iii)  and (iv) read $h^0(E \otimes E \otimes L(D)) =0$ and $h^0(E^* \otimes E^* \otimes L^*(D)) =0$  for any $D \in C^{(2)}$, respectively. These vanishing conditions are checked by Lemma \ref{eot}  under the assumption $g \ge 5 +2 \ell$  in each case for $L = \cO_C$ or $L= \cO_C(x)$.   
 To get a conclusion from Lemma \ref{sufficient}, it will suffice to check that $h^1(\Sym^2 E \otimes L^*)$ and  $h^1 ( \wedge^2 E \otimes L^*)$ are  bigger than $ 2 r^2 + 2$, respectively. 
 
 In the symplectic case, by Riemann--Roch the desired inequality would follow  from 
\[ 2r^2 + 3 \ \le \  \frac{1}{2}r(r+1)(g-1+\deg(L)). \]
 This reads
 \[
   \frac{ g-1+\ell}{2} \ge  \frac{2r^2+3 }{r(r+1)}, 
\]
 which holds for $r \ge 2$ and $g \ge 5 - \ell$.  

In the orthogonal case, the desired inequality would follow from
 \[ 2r^2 + 3 \ \le \  -(r-1) e + \frac{1}{2}r(r-1)(g-1+\deg(L)). \]
 This reads
 \[
   \frac{ g-1 + \ell}{2} \ge \frac{2r^2+3 +(r-1)e}{r(r-1)}   ,
\]
    which holds if   either ($r =3, \: g \ge 8$) or ($r \ge 4, \: g \ge 7$). 
\end{proof}

Now we discuss the case of orthogonal bundles of odd rank. In this case, we require some more vanishing results. To get a better genus bound, we assume here $e= \deg E \in \{-2, -1 \}$ instead of $\{-1, 0 \}$.

\begin{lemma} \label{npovan} Suppose $g \ge 8$. Suppose $r \ge 2$, and let $E$ be a generic stable bundle of rank $r$ and degree $e$, where $e \in \{  -2,-1 \}$. Let $0 \to E \to F \to \Oc \to 0$ be a generic extension. Then $h^0 ( \Oc(D) \otimes \Endo F ) = 0$ for all $D \in \Ct$. \end{lemma}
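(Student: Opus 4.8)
\textbf{Proof plan for Lemma \ref{npovan}.}
The strategy is to reduce the vanishing of $h^0(\Oc(D) \otimes \Endo F)$ for the extension bundle $F$ sitting in $0 \to E \to F \to \Oc \to 0$ to vanishing statements about $E$ itself, exactly in the spirit of the proof of Lemma \ref{sufficient}, but adapted to the special quotient $\Oc$. First I would fix a nonzero map $\alpha \colon F \to F(D)$ for some $D \in \Ct$ and aim to show $\alpha = \Iden_F \otimes \mathbf{s}$ for some $\mathbf{s} \in H^0(\Oc(D))$; this suffices since such $\alpha$ contribute a copy of $H^0(\Oc(D))$, which is accounted for by the "$\Endo$" (traceless) condition — here one must be a little careful, as the relevant bundle for the map $\Psi_W$ is $\Endo F$, not $\End F$, so I should track that the identity-times-section maps are precisely the part quotiented out. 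Then by a Lemma \ref{sufficient}-type argument, $\alpha|_E$ factors through $E(D)$ provided $h^0(\Hom(E, \Oc(D))) = h^0(E^*(D)) = 0$, and the induced map on $E$ is $\Iden_E \otimes \mathbf{s}$ provided $h^0(\Oc(D) \otimes \Endo E) = 0$; subtracting $\Iden_F \otimes \mathbf{s}$ reduces us to a map $F \to F(D)$ vanishing on $E$, i.e. a map $\Oc \to F(D)$, whose composite to the quotient $\Oc(D)$ is multiplication by a section, and which then must vanish for a generic extension class by a dimension count on the kernel of the lift-obstruction map, just as in Lemma \ref{sufficient}.

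The inputs I would need to feed into this machine are: (a) $h^0(\Oc(D) \otimes \Endo E) = 0$ for all $D \in \Ct$, which follows from Proposition \ref{HRvanishing} since $\deg D = 2$ requires $g \ge 5$; (b) $h^0(E^*(D)) = 0$ for all $D \in \Ct$, i.e. $h^0$ of a generic stable bundle of rank $r$ and degree $-e + 2r \ge r(r-1)(g-1)$... wait, more carefully: $E^*$ has rank $r$, degree $-e \in \{1,2\}$, and $E^*(D)$ has degree $-e+2r$; by Lemma \ref{Segre} with the roles set up correctly this vanishes when $-e + 2r < (r-1)(g-1)$, which for $r \ge 2$ and $g \ge 8$ and $-e \le 2$ reads $2r + 2 < (r-1)(g-1)$ — true for $r \ge 2, g \ge 8$ (check $r=2$: $6 < g-1$, i.e. $g \ge 8$; good, tight); (c) $h^0(\Hom(\Oc, E(D))) = h^0(E(D)) = 0$, again by Lemma \ref{Segre}: $E(D)$ has degree $e+2r$, vanishing when $e + 2r < (r-1)(g-1)$, which is weaker than (b); (d) the dimension count: $h^1(\Hom(\Oc, E)) = h^1(E)$ must exceed the dimension of $\bigcup_{D' \in \Ct} \ker$ of the obstruction maps, which is bounded by $2r + 2$ (here $\rank E_1 \cdot \rank E_2 = r \cdot 1$), so I need $h^1(E) = h^1(\Oc \otimes E) \ge 2r + 3$; by Riemann--Roch $h^1(E) = h^0(E) - e - r(1-g) \ge -e + r(g-1) \ge 1 + r(g-1)$, which exceeds $2r+3$ for $g \ge 8, r \ge 2$ with plenty of room.

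The one genuinely delicate point — and the step I expect to be the main obstacle — is handling the hyperelliptic case and, more substantively, the fact that the quotient $\Oc$ is a \emph{line bundle} rather than a bundle of rank $\ge 2$, so the "$\beta \ne 0$" branch of the Lemma \ref{sufficient} argument produces a map $\Oc \to F(D)$ whose image, after projecting to $\Oc(D)$, is multiplication by a section $\mathbf{s}_{D'}$ with $(\mathbf{s}_{D'}) = D'$; one must argue that the corresponding lift-obstruction in $H^1(\Hom(\Oc(D), E(D))) = H^1(E)$ lies in a proper subvariety as $D'$ varies over $\Ct$, and that a generic extension class avoids it. This is structurally the same argument as in Lemma \ref{sufficient} but I should double-check that the bound $2 \cdot \rank(E_2) \cdot \rank(E_1) + 2 = 2r + 2$ on $\dim \bigcup_{D'} \ker$ is correct when $E_2 = \Oc$, and that it is strictly less than $h^1(E)$ under the stated hypotheses; the computation above shows this is comfortably satisfied for $g \ge 8$. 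Finally I would invoke Lemma \ref{open} to pass from the single extension bundle $F$ exhibited here to a generic one (and note that an unstable $F$ can be deformed to a stable one with the same vanishing, since the vanishing is an open condition).
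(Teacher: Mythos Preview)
Your proposal is correct and follows essentially the same route as the paper: verify the hypotheses of Lemma~\ref{sufficient} with $E_1 = E$ and $E_2 = \Oc$, using Proposition~\ref{HRvanishing} for (i), the triviality of $\Endo \Oc$ for (ii), Lemma~\ref{Segre} for (iii)--(iv), and the Riemann--Roch estimate $h^1(E) = -e + r(g-1) \ge 2r+3$ for the dimension bound. The only difference is that the paper simply invokes Lemma~\ref{sufficient} as a black box rather than re-deriving its argument in this special case, and your final appeal to Lemma~\ref{open} is unnecessary since the statement already concerns a generic extension.
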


\begin{proof} Let us apply Lemma \ref{sufficient} with $E_1 = E$ and $E_2 = \Oc$. Condition (i) follows as above from \cite[Proposition 3.2]{HR04} since $E$ is general and $g \ge 5$. Condition (ii) is trivial. Conditions (iii) and (iv) follow from Lemma \ref{Segre} under the assumption $g \ge 8$.

It remains to check that $\dim H^1 ( \Hom ( \Oc , E ) ) > 2 r + 3$. As $E$ is stable of negative degree, $h^0 ( E ) = 0$. By Riemann--Roch, $h^1 ( E ) = -e + r (g-1) \ge r (g-1) + 1$. Using the inequalities $r \ge 2$ and $g \ge 5$, one checks that this exceeds $2r + 3$. 
 The statement now follows from Lemma \ref{sufficient}. \end{proof}

\begin{lemma} \label{npovan2} Let $E$ and $F$ be as in Lemma \ref{npovan}, and suppose $g \ge 8$. Then for all $D \in \Ct$ we have $h^0 ( E^* \otimes F^* (D) ) = 0$ and $h^0 ( F \otimes E (D) ) = 0$. \end{lemma}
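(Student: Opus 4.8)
The two vanishing statements are dual to each other via Serre duality (since $\Kc$ has degree $2g-2$ and the relevant twists are by small divisors), but it is cleaner to attack them directly using the defining extension $0 \to E \to F \to \Oc \to 0$ and the stability/genericity of $E$ together with the earlier Segre-bound lemmas. First I would treat $h^0(F \otimes E(D)) = 0$. Tensoring the extension sequence by $E(D)$ gives
\[
0 \ \to \ E \otimes E (D) \ \to \ F \otimes E (D) \ \to \ E (D) \ \to \ 0,
\]
so it suffices to show $h^0(E \otimes E(D)) = 0$ and $h^0(E(D)) = 0$ for all $D \in \Ct$. The first follows from Lemma \ref{eot}(2): $E$ has rank $r \ge 2$, degree $e < 0$, $s = 2$, and $2s = 4 < g$ since $g \ge 8$. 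The second follows from Lemma \ref{Segre} with $s = 2$: one needs $e + 2r < (r-1)(g-1)$, which for $e \in \{-2,-1\}$, $r \ge 2$, $g \ge 8$ is comfortably satisfied (indeed $(r-1)(g-1) \ge 7(r-1) \ge 2r+5 > e+2r$ for $r\ge 2$). Hence $h^0(F \otimes E(D)) = 0$.

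For $h^0(E^* \otimes F^*(D)) = 0$, I would dualize the extension to get $0 \to \Oc \to F^* \to E^* \to 0$ and tensor by $E^*(D)$:
\[
0 \ \to \ E^* (D) \ \to \ F^* \otimes E^* (D) \ \to \ E^* \otimes E^* (D) \ \to \ 0.
\]
Thus it suffices to show $h^0(E^*(D)) = 0$ and $h^0(E^* \otimes E^*(D)) = 0$ for all $D \in \Ct$. Since $E$ is generic stable of negative degree, $E^*$ is generic stable of positive degree $-e \in \{1,2\}$; for $h^0(E^*(D)) = 0$ I would again invoke Lemma \ref{Segre}, which requires $(-e) + 2r < (r-1)(g-1)$, valid for the same numerical reasons as above. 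For $h^0(E^* \otimes E^*(D)) = 0$ I would apply Lemma \ref{eot}(1) to $E^*$: here the degree is $-e \in \{1,2\} \ge 0$, $s = 2$, and the hypothesis $2(-e) + 2s < g$ reads $2(-e) + 4 < g$, i.e.\ $g > 8$ when $e = -2$ and $g > 6$ when $e = -1$. This is the one spot where the bound is tight: with $e = -2$ one needs $g \ge 9$ rather than $g \ge 8$ — so I would either restrict this particular summand's argument to $e = -1$, or (more likely matching the paper's intent) note that genericity of $E$ lets one replace Lemma \ref{eot} by a direct genericity argument, or observe that the case $e = -2$, $g = 8$ can be handled by a separate elementary-transformation deformation from the $e=-1$ case as in the proof of Lemma \ref{eot}. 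The cleanest fix is to deduce $h^0(E^*\otimes E^*(D))=0$ for $e=-2$ from the $e=-1$ (or $e=0$) case by realizing the relevant $E^*$ as a subsheaf via an elementary transformation $0 \to \widetilde{G} \to G \to \tau \to 0$ with $G$ of smaller degree satisfying the vanishing, exactly the trick used at the end of Lemma \ref{eot}.

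The main obstacle, then, is not the structure of the argument — it is a completely routine diagram chase through the two short exact sequences — but rather the bookkeeping at the numerical boundary: confirming that every invocation of Lemma \ref{Segre}, Lemma \ref{eot}, and Proposition \ref{HRvanishing} is legitimate under the single hypothesis $g \ge 8$ for both values $e \in \{-2,-1\}$, and patching the one borderline summand ($h^0(E^*\otimes E^*(D))$ with $e=-2$) by a deformation/subsheaf argument rather than a direct appeal to Lemma \ref{eot}(1). Once those numerical checks are in place, the lemma follows immediately, and it feeds directly into the odd-rank analogue of Proposition \ref{exhibitV} via Lemma \ref{sufficient} with $E_1 = F^*$ (or $E$) and $E_2 = E \otimes L$ (or $F$).
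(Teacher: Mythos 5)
Your proof is correct and is essentially the paper's own argument: tensor the extension $0 \to E \to F \to \Oc \to 0$ (and its dual) by $E(D)$ and $E^*(D)$ respectively, and kill the outer terms using Lemma \ref{eot} and Lemma \ref{Segre}. The borderline point you flag is real: the paper also deduces $h^0(E^* \otimes E^*(D)) = 0$ from Lemma \ref{eot}(1), whose hypothesis $2(-e) + 4 < g$ forces $g \ge 9$ when $e = -2$, so the stated bound $g \ge 8$ is not literally covered by that citation in this one case; your proposed patches (elementary transformation from the degree $-1$ or $0$ case, or simply restricting to $e=-1$) are reasonable, and in any event the gap is harmless downstream since Proposition \ref{exhibitOdd} assumes $g \ge 9$ (resp.\ $g \ge 14$). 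One tiny slip: your parenthetical $7(r-1) \ge 2r+5$ fails at $r=2$, but the inequality you actually need, $e + 2r < (r-1)(g-1)$ for $e \le 2$, $s=2$, $g \ge 8$, does hold for all $r \ge 2$.
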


\begin{proof} By construction of $F$, for each $D \in \Ct$ we have exact sequences
\[ H^0 ( E \otimes E (D) ) \ \to \ H^0 ( F \otimes E (D) ) \ \to \ H^0 ( E (D) ) \ \to \ \cdots \]
and
\[ H^0 ( E^* (D) ) \ \to \ H^0 ( E^* \otimes F^* (D) ) \ \to \ H^0 ( E^* \otimes E^* (D) ) \ \to \ \cdots \]
By Lemma \ref{eot}, we have $h^0 ( E \otimes E (D) ) = 0 = h^0 ( E^* \otimes E^* (D) )$ for all $D \in \Ct$. The vanishing $h^0 ( E^* (D) ) = 0 = h^0 ( E(D) )$ is a consequence of Lemma \ref{Segre}. The statement follows. \end{proof}

By \cite[Lemma 2.4]{CCH21}, for any $L$-valued orthogonal bundle $W$ of odd rank, there is a line bundle $N$  such that $W \otimes N$ is an $\cO_C$-valued orthogonal bundle of trivial determinant. So we may work only for the moduli component  of $\cO_C$-valued orthogonal bundles of trivial determinant. As in the even rank case, the moduli space $\mathcal M O_C(2r+1,  \cO_C)$ has two  components classified by the 2nd Stiefel--Whitney class $w_2(W)$.  By \cite[Theorem 3.1]{CH15}, the degree of any rank $n$ isotropic subbundle of $W$ has the same parity as $w_2(W)$.
To construct such orthogonal bundles of rank $2r+1$ as extensions, we use some results from \cite[{\S} 3]{CH15}. Let $0 \to E \xrightarrow{j} F \to \Oc \to 0$ be an extension as above, and let $\Pi_j$ be the subspace of $H^1 ( F \otimes E )$ as defined in \cite[{\S} 3]{CH15}, which contains $H^1 ( C, \wedge^2 E )$ as a subspace of codimension 1.
 By \cite[Lemma 3.2]{CH15}, an extension $0 \to E \to W \to F^* \to 0$ defined by a class  contained in  $ \Pi_j \setminus H^1 ( C, \wedge^2 E ) $ is an orthogonal bundle.

\begin{proposition} \label{exhibitOdd} Suppose either  ($r=2, \ g \ge 14$) or ($r \ge 3, \ g \ge 9$). Let $0 \to E \to W \to F^* \to 0$ be a stable orthogonal bundle of rank $2r+1$ as above, whose extension whose class is general in $\Pi_j$. Then $h^0 ( \Oc (D) \otimes \Endo W ) = 0$ for all $D \in \Ct$. \end{proposition}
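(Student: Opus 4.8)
The plan is to reduce the statement to a single application of Lemma \ref{sufficient}, exactly as in the proof of Proposition \ref{exhibitV}, but now with $E_1 = E$ and $E_2 = F^*$. With this choice $\Hom(E_2,E_1) = F \otimes E$, and the extension $0 \to E \to W \to F^* \to 0$ is classified by an element of $H^1(C, F \otimes E)$; by construction the subspace $\Pi_j$ of \cite[\S 3]{CH15} sits inside this $H^1$, so the hypothesis ``$\delta$ is a general element of $\Pi$'' of Lemma \ref{sufficient} is precisely the hypothesis of the proposition. Thus the whole proof reduces to two tasks: verifying the four vanishing conditions (i)--(iv) of Lemma \ref{sufficient} for the pair $(E_1,E_2) = (E, F^*)$, and checking the numerical inequality $\dim \Pi_j \ge 2\cdot\rank(E)\cdot\rank(F^*) + 3 = 2r(r+1) + 3$.

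For the vanishing conditions I would argue as follows. Condition (i), $h^0(\Oc(D) \otimes \Endo E) = 0$ for all $D \in \Ct$, follows from Proposition \ref{HRvanishing}, since $E$ is a general stable bundle and $g \ge 5$. Condition (ii), $h^0(\Oc(D) \otimes \Endo F) = 0$ for all $D \in \Ct$, is exactly Lemma \ref{npovan}, which is where the hypotheses $e = \deg E \in \{-2,-1\}$ and $g \ge 8$ are used. Conditions (iii) and (iv) unwind to $h^0(C, E^* \otimes F^*(D)) = 0$ and $h^0(C, F \otimes E(D)) = 0$ for all $D \in \Ct$, and these are precisely the two vanishings of Lemma \ref{npovan2}, again valid for $g \ge 8$. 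So all the required input vanishings are already in hand once $g \ge 8$, and nothing new needs to be proved here.

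The one substantive computation is the lower bound on $\dim \Pi_j$. I would use that $\Pi_j$ contains $H^1(C, \wedge^2 E)$ as a subspace of codimension one (this is how $\Pi_j$ is defined in \cite[\S 3]{CH15}), so $\dim \Pi_j = h^1(C, \wedge^2 E) + 1$. Since $E$ is stable of negative degree $e$, the bundle $\wedge^2 E$ is semistable of negative slope $2e/r$, hence $h^0(C, \wedge^2 E) = 0$, and Riemann--Roch gives $h^1(C, \wedge^2 E) = -(r-1)e + \tfrac{r(r-1)}{2}(g-1)$. Substituting and taking the worst case $e = -1$, the inequality $\dim \Pi_j \ge 2r(r+1) + 3$ becomes $g-1 \ge \frac{4r^2 + 2r + 6}{r(r-1)}$; the right-hand side equals $13$ for $r = 2$, equals $8$ for $r = 3$, and is at most $6.5$ for $r \ge 4$, so it holds exactly under the stated bounds ($r=2$, $g \ge 14$) or ($r \ge 3$, $g \ge 9$), each of which also forces $g \ge 8$. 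Lemma \ref{sufficient} then yields $h^0(\Oc(D) \otimes \Endo W) = 0$ for all $D \in \Ct$ for $W$ defined by a general class in $\Pi_j$; and since $H^1(C,\wedge^2 E)$ is a proper subspace, such a general class lies in $\Pi_j \setminus H^1(C,\wedge^2 E)$, so $W$ is genuinely orthogonal by \cite[Lemma 3.2]{CH15} (its stability either holds for a general such extension or is arranged afterwards via the openness in Lemma \ref{open}).

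The main obstacle is really bookkeeping rather than a new idea: one must make sure the degree shift to $e \in \{-2,-1\}$ is compatible both with the rank-$(2r+1)$ orthogonal extension construction of \cite{CH15} (via $\Pi_j$) and with the Stiefel--Whitney/parity requirement, which is why this proposition re-chooses $e$ relative to the even-rank case; and one must confirm that the genus thresholds produced by the $\dim \Pi_j$ inequality do not exceed the advertised ones. The $r = 2$ case is the tight case and is precisely where the bound jumps to $g \ge 14$, so that is the step to watch.
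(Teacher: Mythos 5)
Your proposal is correct and follows essentially the same route as the paper's own proof: Lemma \ref{sufficient} with $E_1 = E$, $E_2 = F^*$, conditions (i)--(iv) supplied by Proposition \ref{HRvanishing}, Lemma \ref{npovan} and Lemma \ref{npovan2}, and the Riemann--Roch lower bound $\dim \Pi_j = h^1(\wedge^2 E)+1 \ge -(r-1)e + \tfrac{r(r-1)}{2}(g-1)+1$ yielding exactly the thresholds ($r=2,\ g\ge 14$) and ($r\ge 3,\ g\ge 9$). The only (harmless) additions are your explicit justification that $h^0(\wedge^2 E)=0$ and the closing remark about general classes lying off $H^1(\wedge^2 E)$.
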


\begin{proof} Again, we use Lemma \ref{sufficient}; this time with $E_1 = E$ of rank $r$, degree $e$  and $E_2 = F^*$ of rank $r+1$, degree $-e$  where $e \in \{ -2, -1\}$. Condition (i) follows from Proposition \ref{HRvanishing}  as before. Condition (ii) follows from Lemma \ref{npovan}. Conditions (iii) and (iv) follow from Lemma \ref{npovan2}.

Lastly, we must show that $\dim ( \Pi_j ) \ge 2 \cdot \rank ( E ) \cdot \rank (F^*) + 3 = 2r(r+1) + 3$. By Riemann--Roch,
\[ \dim ( \Pi_j ) \ = \ h^1 ( \wedge^2 E )  +1\ \ge \ -(r-1)e + \frac{r(r-1)}{2} (g-1)+1  . \]
Hence it suffices to have
\[
 -(r-1)e + \frac{r(r-1)}{2} (g-1)+1 \ \ge \ 2r(r+1) + 3.
\]
 Simplifying this by using $e \le -1$,  we get the wanted inequality if 
\begin{equation} \label{ineqPij}
\frac{r(r-1)}{2} (g-1) \ge 2r^2 + r+3.
\end{equation}
This holds for ($r=2, g \ge 14$) or ($r \ge 3, g \ge 9$). 
\end{proof}

\begin{proof}[Proof of Theorem \ref{embedding}] By Lemma \ref{open}, it suffices to exhibit a single element $W_0$  of each of the moduli spaces satisfying the vanishing property (\ref{embppty}). This follows from Proposition \ref{exhibitV} and \ref{exhibitOdd}.
 \end{proof}
 
 \begin{remark}
(1)  The genus bound in Theorem \ref{embedding} is not optimal, and can be improved simply by computing inequalities above more accurately. For instance, the above inequality (\ref{ineqPij}) reads $g \ge  \frac{2(2r^2+r+3)}{r(r-1)} +1$, which  becomes $g \ge 6$ in the limit $r \to \infty$.\\
(2) We did not consider orthogonal bundles of rank $\le 4$ according to the convention in \S 2.5. But all the arguments in \S 5 are valid for arbitrary rank, hence we can apply the same argument to get a very ampleness result for orthogonal bundles of rank $\le 4$.  For example, we could state Proposition \ref{exhibitV}  (2) for $r=2$, in which case the genus assumption would  be  $g \ge 12$.   Hence the map $\Psi_W$ is an embedding for a generic $W \in \mathcal M O_C(4, L)$ if $g \ge 12$. \\
(3) Theorem \ref{embedding} shows that the VMRT of $\MS$ and $\MO$ at a generic point $W$ is biregular to $\PP (W^*)$ and $IG(2, W)$ respectively, under the assumption on the genus. As remarked in \cite[Remark 6.2]{CCL22}, this improves the involved genus bound in \cite[\S 6]{CCL22}.
\end{remark}

 \bibliographystyle{amsplain}
\bibliography{simplicity-vmrt}

\providecommand{\bysame}{\leavevmode\hbox to3em{\hrulefill}\thinspace}
\providecommand{\MR}{\relax\ifhmode\unskip\space\fi MR }
\providecommand{\MRhref}[2]{%
  \href{http://www.ams.org/mathscinet-getitem?mr=#1}{#2}
}
\providecommand{\href}[2]{#2}
\begin{thebibliography}{10}

\bibitem{BH21}
Ali Bajravani and George~H. Hitching, \emph{Brill-{N}oether loci on moduli
  spaces of symplectic bundles over curves}, Collect. Math. \textbf{72} (2021),
  no.~2, 443--469. \MR{4248596}

\bibitem{CCH21}
Daewoong Cheong, Insong Choe, and George~H. Hitching, \emph{Isotropic {Q}uot
  schemes of orthogonal bundles over a curve}, Internat. J. Math. \textbf{32}
  (2021), no.~8, Paper No. 2150047, 36. \MR{4300437}

\bibitem{CCL22}
Insong Choe, Kiryong Chung, and Sanghyeon Lee, \emph{Minimal rational curves on
  the moduli spaces of symplectic and orthogonal bundles}, J. Lond. Math. Soc.
  (2) \textbf{105} (2022), no.~1, 543--564.

\bibitem{CH15}
Insong Choe and George~H. Hitching, \emph{Maximal isotropic subbundles of
  orthogonal bundles of odd rank over a curve}, Internat. J. Math. \textbf{26}
  (2015), no.~13, 1550106, 23. \MR{3435964}

\bibitem{CH22}
\bysame, \emph{Low rank orthogonal bundles and quadric fibrations},
  arXiv:2203.06645 (2022).

\bibitem{H07}
George~H. Hitching, \emph{Subbundles of symplectic and orthogonal vector
  bundles over curves}, Math. Nachr. \textbf{280} (2007), no.~13-14,
  1510--1517. \MR{2354976}

\bibitem{H00}
Jun-Muk Hwang, \emph{Tangent vectors to {H}ecke curves on the moduli space of
  rank 2 bundles over an algebraic curve}, Duke Math. J. \textbf{101} (2000),
  no.~1, 179--187.

\bibitem{H01}
\bysame, \emph{Hecke curves on the moduli space of vector bundles over an
  algebraic curve}, Algebraic geometry in {E}ast {A}sia ({K}yoto, 2001), World
  Sci. Publ., River Edge, NJ, 2002, pp.~155--164.

\bibitem{HR04}
Jun-Muk Hwang and S.~Ramanan, \emph{Hecke curves and {H}itchin discriminant},
  Ann. Sci. \'{E}cole Norm. Sup. (4) \textbf{37} (2004), no.~5, 801--817.

\bibitem{L83}
Herbert Lange, \emph{Zur {K}lassifikation von {R}egelmannigfaltigkeiten}, Math.
  Ann. \textbf{262} (1983), no.~4, 447--459. \MR{696517}

\bibitem{NR75}
M.~S. Narasimhan and S.~Ramanan, \emph{Deformations of the moduli space of
  vector bundles over an algebraic curve}, Ann. of Math. (2) \textbf{101}
  (1975), 391--417.

\bibitem{NR78}
\bysame, \emph{Geometry of {H}ecke cycles. {I}}, C. {P}. {R}amanujam---a
  tribute, Tata Inst. Fund. Res. Studies in Math., vol.~8, Springer, Berlin-New
  York, 1978, pp.~291--345.

\bibitem{S05}
Xiaotao Sun, \emph{Minimal rational curves on moduli spaces of stable bundles},
  Math. Ann. \textbf{331} (2005), no.~4, 925--937.

\end{thebibliography}

\end{document}